\crefname{equation}{Equation}{Equations}
\Crefname{equation}{Equation}{Equations}
\crefname{lemma}{Lemma}{Lemmas}
\Crefname{lemma}{Lemma}{Lemmas}
\crefname{theorem}{Theorem}{Theorems}
\Crefname{theorem}{Theorem}{Theorems}
\crefname{proposition}{Proposition}{Propositions}
\Crefname{proposition}{Proposition}{Propositions}
\crefname{corollary}{Corollary}{Corollaries}
\Crefname{corollary}{Corollary}{Corollaries}
\crefname{remark}{Remark}{Remarks}
\Crefname{remark}{Remark}{Remarks}
\crefname{definition}{Definition}{Definitions}
\Crefname{definition}{Definition}{Definitions}
\renewcommand{\Im}{\operatorname{Im}}
\renewcommand{\Re}{\operatorname{Re}}
\newcommand{\CI}{\mathcal{C}^\infty}
\newcommand{\CcI}{\mathcal{C}_c^\infty}
\newcommand{\RR}{\mathbb{R}}
\newcommand{\NN}{\mathbb{N}}
\renewcommand\Re{\operatorname{Re}}
\newcommand{\A}{\mathcal{A}}
\newcommand{\Hb}{H_\text{b}}
\newcommand{\Hbh}{H_{\text{b},h}}
\newcommand{\ol}{\overline}
\newcommand{\Sph}{\mathbb{S}}
\newcommand{\tface}{{\mathrm{tf}}}
\newcommand{\zface}{{\mathrm{zf}}}
\newcommand{\bface}{{\mathrm{bf}}}
\newcommand{\pa}{\partial}
\newcommand{\ti}{\tilde}
\newcommand{\la}{\langle}
\newcommand{\ra}{\rangle}
\newcommand{\inv}{^{-1}}
\newcommand{\f}{\frac}
\newcommand{\iy}{\infty}
\renewcommand{\S}{{\mathbb S}}
\newcommand{\p}{\phi}
\newcommand{\supp}{\text{supp\,}}
\let\pminus\pm
\let\hat\widehat
\renewcommand{\pm}{\phi_{\le m}}
\def\doi#1{ {\href{http://dx.doi.org/#1}
   {{\mdseries\ttfamily DOI}}}}
\newtheorem{theorem}{Theorem}[section]
\newtheorem{corollary}[theorem]{Corollary}
\newtheorem{lemma}[theorem]{Lemma}
\newtheorem{proposition}[theorem]{Proposition}
\theoremstyle{definition}
\Crefname{claim}{Claim}{Claims}
\newtheorem{definition}[theorem]{Definition}
\newtheorem{example}[theorem]{Example}
\theoremstyle{remark}
\newtheorem{remark}[theorem]{Remark}
\newtheorem{problem}[theorem]{Problem}
\numberwithin{equation}{section}
\renewcommand{\Re}{\mathrm{Re}\,}
\renewcommand{\Im}{\mathrm{Im}\,}
\newcommand{\x}{\alpha}
\newcommand{\xb}{\beta}
\newcommand{\xd}{\delta}
	\newcommand{\xG}{\Gamma}
\newcommand{\eps}{\epsilon}
\newcommand{\xo}{\omega}
\newcommand{\xs}{\sigma}
\newcommand{\C}{\mathbb C}
\newcommand{\N}{{\mathbb N}}
\newcommand{\R}{\mathbb R}
\newcommand{\calA}{\mathcal A}
\newcommand{\cA}{\mathcal A}
\newcommand{\calC}{\mathcal C}
\newcommand{\calF}{\mathcal F}
\newcommand{\calK}{\mathcal K}
\newcommand{\calM}{\mathcal M}
\newcommand{\calY}{\mathcal Y}
\newcommand{\cE}{\mathcal E}
\newcommand{\cV}{\mathcal V}
\newcommand{\ts}{t_*}
\newcommand{\Diffb}{\mathrm{Diff}_\mathrm{b}}
\newcommand{\wh}{\widehat}
\definecolor{green}{rgb}{0,0.8,0}
\definecolor{babypink}{rgb}{0.96,0.76,0.76}
\newcommand{\jts}{\la t_*\ra}
\newcommand{\pats}{\pa_{t_*}}
\title{Asymptotic Expansions for Semilinear Waves on Asymptotically Flat Spacetimes}
\author[S. Looi]{Sam Looi}
\email{Looi@caltech.edu}
\address{Div. Physics, Mathematics and Astronomy, California Institute of Technology, Pasadena, CA 91125, USA}
\author[H. Xiong]{Haoren Xiong}
\email{haorenxiong@math.ucla.edu}
\address{Department of Mathematics, University of California, Los Angeles, CA 90095, USA}
\begin{document}

\begin{abstract}
We establish precise asymptotic expansions for solutions to semilinear wave equations with power-type nonlinearities on asymptotically flat spacetimes. Our analysis focuses on two key cases: cubic nonlinearities and higher-order power nonlinearities. For cubic nonlinearities of the form $a(t,x) \, \phi^3$, we prove asymptotic expansions for the solution globally in the spacetime. In the special case of compact spatial regions, solutions exhibit the asymptotic behavior $\phi(t, x) = c \, t^{-2} + \mathcal{O}(t^{-3+})$. For higher-order nonlinearities $a(t,x) \, \phi^p$ with $p \geq 4$, we prove the solution satisfies $\phi(t, x) = d \, t^{-3} + \mathcal{O}(t^{-4+})$, thereby extending the classical Price's law (a late-time tail postulated in 1972) to nonlinear settings in a precise fashion. These results sharpen previous decay estimates for nonlinear waves. We develop a radiation field expansion and a low-energy resolvent expansion adapted to conormal asymptotic inputs, extending Hintz’s approach for linear waves to the semilinear setting. Our methods connect geometric microlocal analysis (b-calculus) with classical physical-space techniques, providing a convenient tool for analyzing asymptotic behavior of nonlinear waves. 
\end{abstract}

\maketitle

\section{Introduction}

We study late-time asymptotics for semilinear wave equations on stationary, asymptotically flat spacetimes. The goal is to go beyond decay rates and obtain asymptotic expansions, including an explicit description of the leading tail and its coefficient. The analysis distinguishes the cubic case from higher powers, and it yields both spatially compact asymptotics and a global description on the resolved spacetime.

The geometric hypotheses are tailored to stationary black hole exteriors. The class of metrics in Definition \ref{def:metric} encodes the long-range $r^{-1}$ correction to Minkowski determined by the mass parameter $m$; for Kerr one uses a minor variant, see \cite[Section 4.1]{Hin22}. We also assume spectral admissibility (Definition \ref{def:specadm}), namely mode stability and high-energy resolvent control. These assumptions hold for subextremal Kerr exteriors: mode stability is established in \cite{WhitingKerrModeStability, ShlapentokhRothmanModeStability}, and the required resolvent bounds follow from microlocal estimates at the normally hyperbolic trapped set \cite{WunschZworskiResolventTrapped,Dyatlov2015,DyatlovSpectralGaps} together with radial point estimates at the horizons and at spatial infinity \cite{VasyMicroKerrdS,VasyLAPLag}.

Quantitative information about late-time tails in the exterior, in particular polynomial lower bounds along the event horizon, are a key input in the spherically symmetric strong cosmic censorship analysis: when propagated into the black hole interior, they lead to instability at the Cauchy horizon; see \cite{Luk2019a,Luk2019b}. The analysis in this paper is restricted to the stationary, asymptotically flat exterior region. We do not study horizon crossing or the interior problem.

Let the coordinate $t_*$ be a globally defined time function that interpolates smoothly between the standard asymptotically flat time coordinate far from the black hole and a regular time coordinate near the event horizon. Specifically, $t_*$ can be chosen such that
\[
t_* \approx 
\begin{cases}
t + r_* & \text{near the event horizon } r = 2m, \\
t - r_* & \text{for large } r,
\end{cases}
\]
where $r_* = r + 2m \log(r - 2m)$ is the tortoise coordinate. This coordinate is regular across the entire exterior spacetime while asymptotically matching standard coordinates. We remark that the gradient of $t_*$ is everywhere timelike, so each level set $\Sigma_{t_*}$ is spacelike.

\medskip

We study the following initial value problem:

\begin{problem}[Initial Value Problem]
Let $(M, g)$ be a stationary and asymptotically flat spacetime with mass $m \in \R$, as defined in Definition \ref{def:metric}. Suppose that the metric $g$ is spectrally admissible (see Definition \ref{def:specadm}). Let $t_*$ be a globally defined time function that is future timelike and asymptotically matches the standard asymptotically flat (retarded) time coordinate far from any potential black hole region. We consider the following nonlinear wave equation:
\begin{equation}
\label{IVP intro}
\begin{cases}
\Box_g \phi = a(t_*,x) \phi^p, & \text{on } M, \\
\phi(0, x) = \phi_0(x), & \text{on } \Sigma_0, \\
\partial_{t_*}\phi(0, x) = \phi_1(x), & \text{on } \Sigma_0,
\end{cases}
\end{equation}
where $\Box_g$ is the wave operator associated with the metric $g$, $p \geq 3$ is an integer, and $\Sigma_0 = \{t_* = 0\}$ is the initial Cauchy hypersurface. The coefficient $a(\ts,x)$ of the nonlinearity is a function depending smoothly on $x\in \RR^3$ which is also smooth in $|x|^{-1}$ if $|x|>1$, and we assume that $a(\ts,x)$ is a symbol of order $0$ in $\ts$, i.e. satisfies the estimate \eqref{a(t*,x) estimate}. The initial data $(\phi_0, \phi_1)$ are assumed to be smooth and compactly supported outside the event horizon. 
\end{problem}

Definition \ref{def:metric} is formulated on an exterior region and therefore does not include the full Kerr or Schwarzschild manifolds with event horizons. The arguments in this paper use only the stationary asymptotically flat structure and the spectral assumptions in the exterior. With the standard modifications near the horizons (compare \cite[Section 4]{Hin22}), the same conclusions are expected to hold for subextremal Kerr exteriors, in the same manner explained by the author of \cite{Hin22}.

Our main results can be summarized as follows: The main asymptotic regimes are different for $p=3$ and for $p\ge 4$.

\begin{enumerate}
\item (Cubic case.) For $\Box_g\phi=a(t_*,x)\phi^3$, we prove that $\phi$ admits a full asymptotic description on the resolved spacetime and, in particular, that on any fixed compact set $K\Subset\RR^3$ one has
\[
\phi(t_*,x)=2c_0\,t_*^{-2}+\mathcal{O}(t_*^{-3+}),
\]
with $c_0$ given explicitly in terms of the initial data, the geometry, and the first radiation field. We prove complete asymptotics in the entire spacetime. 

\item (Higher powers.) For $\Box_g\phi=b(t_*,x)\phi^p$ with $p\ge4$, we show that on any compact set $K\Subset\RR^3$,
\[
\phi(t_*,x)=d\,t_*^{-3}+\mathcal{O}(t_*^{-4+}),
\]
where $d$ is an explicit constant determined by the background metric, the nonlinearity, and radiation field data. This matches the $t_*^{-3}$ Price tail on compact sets, while accounting for the nonlinear contribution to the coefficient.

Theorem \ref{thm:highorder} contains details about this constant. This result extends the classical linear Price's law to nonlinearities of the above form.
\end{enumerate}

Our method has two main components. First, we derive higher-order radiation field expansions at $\mathscr I^+$ with conormal remainders, using available pointwise decay estimates for semilinear waves. Second, we extend low-energy resolvent expansions to inputs with these conormal asymptotics. The low-frequency structure of the resolvent produces the logarithmic terms that control the leading late-time tail and yields formulas for the leading coefficients.

We separate the asymptotic analysis from the existence theory. Throughout, we assume the forward solution exists globally in $t_*$. This holds in standard settings (e.g. defocusing nonlinearities, or small-data focusing problems). For convenience we recall in Appendix~\ref{app:GE} the global existence results and sharp pointwise decay estimates from the literature that motivate the a~priori bounds used in the main arguments.

Our main theorems are as follows.

\begin{theorem}[Asymptotics in the near zone/Spatially compact asymptotics]\label{thm:compactmain intro}
Suppose that a global solution $\phi$ to the initial value problem \eqref{IVP intro} with $p=3$ exists. Assume either that the initial data are smooth and compactly supported, or that they are smooth and satisfy the conditions \eqref{initial data u0, u1} with solution satisfying the conormal bounds \eqref{phi conormal est}. Then for any compact set $K \Subset X^\circ \cong \R^3$, we have the asymptotic expansion
\begin{equation}
\label{phi asymp spatial compact intro}
|\pa_{\ts}^j \pa_x^\beta (\phi - 2c_0 t_*^{-2})|\leq C_{j\beta K} t_*^{-3-j+},\quad \forall\, j\in\mathbb{N},\ \beta\in\mathbb{N}^3,
\end{equation}
where $c_0$ is a computable constant given by \eqref{eq:leadcoefficient}, which depends on the initial data, the nonlinearity, and the first-order radiation field of the solution.
\end{theorem}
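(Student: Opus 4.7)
The plan is to combine a radiation-field analysis near null infinity with a generalized low-energy resolvent expansion on the stationary background, closed by a bootstrap on a priori decay. First I would fix pointwise decay bounds for $\phi$ of the form $|\phi(t_*,x)| \lesssim \langle t_*+r\rangle^{-1}\langle t_*-r\rangle^{-1+}$ in the wave zone and $|\phi(t_*,x)|\lesssim t_*^{-2+}$ on compact spatial sets, together with the corresponding derivative bounds obtained by commuting with the stationary Killing field $\partial_{t_*}$ and with vector fields adapted to the asymptotically flat structure. These estimates imply the existence of a smooth first-order radiation field
\[
\Phi_1(u,\omega) \;=\; \lim_{r\to\infty} r\,\phi(u+r,\, r\omega)
\]
with suitable decay in the retarded time $u$, and they make the nonlinear source integrable in the relevant weighted spaces.

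The central observation is that near null infinity $\phi \sim \Phi_1(u,\omega)/r$, so the source $a(t_*,x)\phi^3$ behaves like $a_\infty(u,\omega)\,\Phi_1(u,\omega)^3\, r^{-3}$; this is precisely the borderline rate that produces a genuine $t_*^{-2}$ contribution on compact sets, in contrast to the case $p \geq 4$, where the faster $r^{-p}$ decay leaves the linear Price's law profile $t_*^{-3}$ intact. Splitting $\phi = \phi_{\mathrm{lin}} + \phi_{\mathrm{nl}}$, the linear piece (solving the homogeneous equation with the given data) contributes at order $t_*^{-3+}$ by the linear Price's law on this background and therefore enters the remainder. The leading behavior comes from $\phi_{\mathrm{nl}}$, the Duhamel integral of $a\phi^3$: Fourier transforming in $t_*$ and inserting the generalized low-energy expansion of $\hat\Box(\sigma)^{-1}$, the pairing of the leading $\sigma = 0$ singularity of the resolvent with the source identifies the constant $2c_0$ of \eqref{eq:leadcoefficient} as an explicit functional of the initial data, of $a$, and of $\Phi_1^3$ integrated along $\mathcal{I}^+$. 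The $\mathcal{O}(t_*^{-3+})$ remainder is obtained by carrying the resolvent expansion one order further (equivalently, by integration by parts in $\sigma$). Derivative estimates then follow from commuting with $\partial_{t_*}$, which gains the extra $t_*^{-j}$ factor because $\partial_{t_*}$ is Killing, and from local elliptic regularity for $\hat\Box(0)$ to handle $\partial_x^\beta$ on the compact set $K$.

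The main obstacle is the self-referential character of $c_0$: because $\Phi_1$ is itself determined by the full nonlinear solution, the argument must close in a self-consistent manner. Concretely, I would first pin down $\Phi_1$ in a suitable weighted space from the a priori bounds, then verify that the proposed leading profile $2c_0 t_*^{-2}$ is compatible with the cubic feedback (inserting $t_*^{-2}$ into $\phi^3$ produces only $t_*^{-6}$, which is harmless for the $t_*^{-3+}$ remainder), and finally close the estimate by a second pass through the resolvent expansion to upgrade the bootstrap to the sharp rate claimed. A secondary difficulty is keeping constants uniform as $K$, $j$, and $\beta$ vary; this is resolved by the stationarity of the background for the $\partial_{t_*}$ derivatives and by standard elliptic regularity arguments for the spatial ones.
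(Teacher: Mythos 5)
Your overall architecture — a priori decay bounds, extraction of the first radiation field, Fourier transform in $\ts$, and a low-energy resolvent expansion whose leading $\sigma=0$ singularity produces the $\ts^{-2}$ profile — is the same as the paper's. The paper, however, does not split $\phi=\phi_{\rm lin}+\phi_{\rm nl}$; it multiplies by a temporal cutoff $\chi(\ts)$ and studies the forcing problem $\Box_g\phi_\chi=f$ with $f=\chi a\phi^3+[\Box_g,\chi]\phi$, so that the initial data enter through the commutator terms rather than through a separate homogeneous evolution. This difference exposes the genuine gap in your proposal: you claim the linear piece "contributes at order $\ts^{-3+}$ by the linear Price's law and therefore enters the remainder." That is true only for compactly supported data. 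The theorem also covers data satisfying \eqref{initial data u0, u1}, with tails $\rho^2 c_2(\omega)$ in $u_0$ and $\rho d_1(\omega)$ in $u_1$, and for such data the sharp linear asymptotics (Hintz) give a \emph{nonzero} $\ts^{-2}$ leading term whose coefficient is exactly the $-\frac{1}{4\pi}\int_{\S^2}(2c_2+\ti g d_1)\,d\omega$ portion of \eqref{eq:leadcoefficient}. Your decomposition therefore cannot recover the stated constant $c_0$: the data-dependent part of the leading coefficient lives in the piece you discard into the remainder, while your Duhamel piece only produces the $\int a_0\phi_{\rm rad,1}^3$ contribution. Either you must restrict to compactly supported data, or you must carry the sharp linear expansion of $\phi_{\rm lin}$ to leading order and add its $\ts^{-2}$ coefficient to the one from the nonlinear source.

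Two further points are glossed over. First, to identify the explicit $\rho^3$ angular coefficient of the time-integrated source (the quantity whose spherical average is $c_0$), one needs more than the existence of $\phi_{\rm rad,1}$: one needs $\phi-\rho\phi_{\rm rad,1}\in\rho^2\A^0$ uniformly in time together with integrability in $\ts$ of the subleading error; the paper obtains this via a second-order radiation field expansion and a b-ODE argument (Lemmas \ref{lem:rad field small time} and \ref{lem:rad field large time}), and your proposal should explain how the a priori bounds yield this quantitative $\rho^2$ control rather than merely the limit defining $\Phi_1$. Second, the high-frequency regime is not automatic here: because the (nonlinear) source is not compactly supported in $\ts$, its Fourier transform is not Schwartz in $\sigma$, and showing the high-frequency part is $\mathcal{O}(\ts^{-\infty})$ requires combining the semiclassical resolvent bounds \eqref{boxhat resolv semiclassical bounds} with repeated integration by parts exploiting the symbol-type decay of $\wh f$ in $\sigma$; this is where the paper departs most from the linear theory and your "integration by parts in $\sigma$" remark does not yet engage with it. Your concern about self-referentiality of $c_0$ is, by contrast, not a real obstruction: once the a priori decay \eqref{phi conormal est} is granted, $\phi_{\rm rad,1}$ is a fixed, well-defined function and the resolvent computation is direct, with no bootstrap needed.
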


To formulate the global statement, we work on a resolved spacetime $M_+$ obtained by compactifying the causal future of $\{t_*=0\}$ and blowing up the corner corresponding to $t_*\to\infty$ and $r\to\infty$. The resulting manifold has four distinguished boundary hypersurfaces (Figure \ref{fig:M+}):
\begin{enumerate}
    \item The initial Cauchy surface $\Sigma$ at $t_* = 0$
    \item Future null infinity $\mathscr{I}^+$, representing the endpoints of outgoing light rays
    \item Minkowski future timelike infinity $I^+$
    \item Spatially compact future infinity $\mathcal K^+$
\end{enumerate}

This framework keeps the different asymptotic regimes separate and makes it possible to state a single expansion that is compatible at the corners where the faces meet. 

\begin{figure}[ht]
\begin{tikzpicture}[scale=2]
  \draw (0,0) -- node[below] {$\Sigma$} ++(3,0);
  
  \draw (0,0) -- node[left] {$\mathscr{I}^+$} ++(0,1.5);
  
  \draw (0,1.5) -- node[above,sloped] {$I^+$} ++(0.5,0.5);
  
  \draw (0.5,2) -- node[above] {$\mathcal K^+$} ++(2,0);
  
  \draw (2.5,2) -- node[above,sloped] {$I^+$} ++(0.5,-0.5);
  
  \draw (3,1.5) -- node[right] {$\mathscr{I}^+$} ++(0,-1.5);
\end{tikzpicture}
\caption{The resolved spacetime manifold $M_+$, with $+$ denoting positive times. The faces other than $\Sigma$ denote asymptotic regimes of the spacetime, i.e., large-time or large-radii regimes. We sometimes call the intersection points of different line segments in the figure ``corners''.}
\label{fig:M+}
\end{figure}
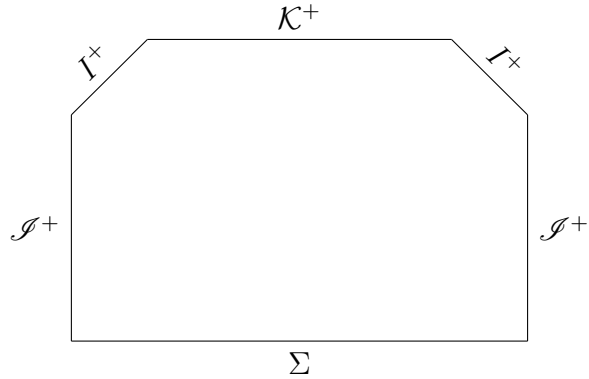

We obtain leading-order asymptotics at each boundary hypersurface of $M_+$, with a fully conormal/polyhomogeneous formulation given later in Theorem~\ref{thm:global main_body text}.

We prove global asymptotics across $M_+$ as follows. A more precise version of Theorem~\ref{thm:global main} is presented in Theorem~\ref{thm:global main_body text} using the formalism of conormal spaces.
\begin{theorem}[Global asymptotics]\label{thm:global main}
Under the same assumptions as in Theorem \ref{thm:compactmain intro}, the solution $\phi$ has the following leading order terms at the various boundary hypersurfaces of $M_+$ as shown in Figure~\ref{fig:M+}:
\begin{enumerate}
\item On $\calK^+$: $2c_0 t_*^{-2}$, where $c_0$ is given by \eqref{eq:leadcoefficient};

\item On $I^+$: $2c_0 t_*^{-2} \frac{\rho t_*}{\rho t_* + 2}$, where $\rho:=|x|^{-1}$. We remark that $v:=\rho\ts$ serves as a coordinate on $I^+$, in fact we have $I^+\cong (0,\infty)_v \times\partial X$;

\item On $\mathscr{I}^+$: $\rho\phi_{\rm rad,1}(\ts,\omega)$, where $\phi_{\rm rad,1}\in \CI([0,\infty)_{t_*};\CI(\S^2))$ is the first radiation field of $\phi$ given in Lemma \ref{lem:rad field small time} and satisfies $\phi_{\rm rad,1}(\ts,\omega) - c_0 \ts^{-1} \in \mathcal{O}(\ts^{-2+})$.
\end{enumerate}
\end{theorem}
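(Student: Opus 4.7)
The plan is to analyze $\phi$ at each of the three boundary hypersurfaces $\calK^+$, $I^+$, and $\mathscr{I}^+$ separately and then verify compatibility at the two corners $\calK^+\cap I^+$ and $I^+\cap\mathscr{I}^+$. Two of the three leading-order statements are already essentially in hand: the $\calK^+$ statement is literally Theorem~\ref{thm:compactmain intro}, and the $\mathscr{I}^+$ statement follows from the existence and structure of the first radiation field supplied by Lemma~\ref{lem:rad field small time}, together with the bound $\phi_{\rm rad,1}(\ts,\omega) - c_0 \ts^{-1} = \mathcal{O}(\ts^{-2+})$ from that same lemma, which also serves as the matching condition at the corner $\mathscr{I}^+\cap I^+$.

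The main new content is the behavior at $I^+$. The second step is therefore to write the ansatz $\phi = \ts^{-2} f(v,\omega) + \text{lower order}$ with $v = \rho \ts$, and to compute the restriction of $\ts^{2}\Box_g$ to $I^+$ in the coordinates $(\ts,v,\omega)$. This is a purely geometric computation using that $g$ is asymptotically flat, and produces a second-order ordinary differential operator $L_v$ in $v$, with no angular derivatives at leading order. Since $a\phi^{3} = \mathcal{O}(\ts^{-6})$, the nonlinearity does not contribute at order $\ts^{-2}$, so $f(v,\omega)$ solves the homogeneous equation $L_v f = 0$. The two-dimensional solution space is pinned down by the two matching conditions $f(v)\to 2c_0$ as $v\to\infty$ (matching $\calK^+$) and $f(v)\sim c_0 v$ as $v\to 0$ (matching $\rho\phi_{\rm rad,1} \sim c_0 \rho \ts^{-1} = c_0 \ts^{-2} v$ near the corner $I^+\cap\mathscr{I}^+$). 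These two conditions single out the solution $f(v) = 2c_0\,v/(v+2)$ claimed in the statement.

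The most technical step, and the main obstacle, is upgrading this formal identification into a genuine leading-order statement with controlled remainders uniformly across all of $M_+$. Following the conormal framework alluded to in the paper (in the spirit of \cite{Hin22}), one constructs a global approximate solution carrying the three prescribed leading terms, verifies that applying $\Box_g$ produces an error with strictly better conormal decay at each face, and then solves away this error using the generalized low-energy resolvent expansion to obtain the true solution within the correct conormal class. The delicate point is the bookkeeping at the two corners: one must ensure that the partial expansions at adjacent faces are compatible to the order needed and that the linear solution operator preserves the relevant polyhomogeneous/conormal classes, so that no spurious logarithmic or resonant terms appear. Once this functional-analytic framework is in place, the three leading coefficients computed above assemble into the precise global expansion stated in Theorem~\ref{thm:global main_body text}.
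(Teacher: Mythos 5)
Your decomposition into the three faces plus corner matching captures the right geometric picture, and items (1) and (3) of the theorem are indeed, respectively, Theorem \ref{thm:compactmain intro} and the radiation-field structure. But there are two genuine gaps. First, you attribute the decay $\phi_{\rm rad,1}(\ts,\omega)-c_0\ts^{-1}=\mathcal{O}(\ts^{-2+})$ to Lemma \ref{lem:rad field small time}; that lemma only produces $\phi_{\rm rad,1}\in\CI([0,\infty)_{\ts};\CI(\S^2))$ and says nothing about large $\ts$. In the paper this decay is an \emph{output} of the global argument: one first establishes $\phi\in\A^{0,((2,0),3-),((2,0),3-)}(M_+)$, feeds this back into Lemma \ref{lem:rad field large time} to get $\phi_{\rm rad,1}-c_{\rm rad,1}(\omega)\tau\in\A^{2-}([0,1)_\tau)$, and only then identifies $c_{\rm rad,1}\equiv c_0$ by matching at $I^+\cap\mathscr{I}^+$. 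Using that decay as the input to your corner-matching condition $f(v)\sim c_0 v$ as $v\to 0$ makes the argument circular.

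Second, the matched-asymptotics derivation of the $I^+$ profile does not work as described. Restricting $\Box_g$ to $I^+$ in the coordinates $(v,\tau,\omega)$ (as in the proof of Lemma \ref{lem:rad field large time}, $\Box_g=2v\tau^2(A+R)$ with $A=(\tau\pa_\tau-v\pa_v)(v\pa_v-1)$ and $R\in v\,\Diffb^2$) and inserting the ansatz $\phi=\tau^2 f(v)$ yields the leading-order equation $(2-v\pa_v)(v\pa_v-1)f=0$, whose solution space is spanned by $v$ and $v^2$; the function $2c_0\,v/(v+2)$ is \emph{not} a solution, and no choice of boundary conditions will select it. The profile $v/(v+2)$ does not come from a spacetime ODE on $I^+$ at all: in the paper it arises from the transition-face model operator $\widetilde\Box$ in the variable $\hat\rho=\rho/\sigma$ on the spectral side (Lemma \ref{lem:2.23Hintz}, Proposition \ref{Hintz Lemma 2.24}), followed by an explicit inverse Fourier transform of $2ic_0\,\sigma\,\tilde u_{\rm mod}(\sigma/\rho)$ using the representation $\pa_{\hat r}(\hat r\tilde u_{\rm mod}(\hat r))=\int_0^\infty e^{-2t\hat r}(t-i)^{-1}\,dt$; the integral $\int_0^\infty \frac{dt}{(2t+i\rho\ts)(t-i)}$ is what produces the factor $(\rho\ts+2)^{-1}$. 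Your final parametrix step (build an approximate solution with the three leading terms and solve away the error) is also a different and unexecuted route from the paper's, which never corrects a parametrix but instead runs the forcing-problem/resolvent decomposition $\phi_\chi=\phi_0+\phi_1+\phi_2$ globally on $M_+$.
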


For higher-order power nonlinearities $\phi^p$ with $p \geq 4$, we prove that the solution admits an asymptotic expansion consistent with a sharpened version of Price's law, but with a modified constant:
\begin{theorem}[Asymptotics for higher-order nonlinearities]\label{thm:highorder}
Assume that a global solution $\phi$ to the initial value problem \eqref{IVP intro} with $p\geq 4$ exists, with a nonlinearity $b(\ts,x) \phi^p$, and with smooth and compactly supported initial data. Then on any compact set $K \Subset X^\circ \cong \R^3$, we have the asymptotic expansion 
\begin{equation}
\label{phi asymp spatial compact highorder}
|\pa_{\ts}^j \pa_x^\beta (\phi - 2d_X(\wh{f}_0) t_*^{-3})|\leq C_{j\beta K} t_*^{-4-j+},\quad \forall\, j\in\mathbb{N},\ \beta\in\mathbb{N}^3, \ x \in K
\end{equation}
where $d_X(\wh{f}_0)$ is a constant that can be computed by the formula \eqref{eq:d(f)} with $\wh{f}_0$ defined by \eqref{eqn:fhat(0) defn Sec 6} and \eqref{the forcing f Sec 6}; therefore $d_X(\wh{f}_0)$ depends on the background metric, the integrals of the radiation fields over the sphere, and the integral of the solution over the full space.
\end{theorem}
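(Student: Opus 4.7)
The plan is to treat $b(\ts,x)\phi^p$ as a source in a linear wave equation and apply the generalized low-energy resolvent expansion developed in earlier sections. For $p\geq 4$ the nonlinearity is sufficiently short range that, in contrast to the cubic case of Theorem~\ref{thm:compactmain intro}, it cannot enhance the leading-order behavior beyond the linear Price's-law rate $\ts^{-3}$. If one assumes a priori that $\phi=\mathcal O(\ts^{-3+})$ on compact sets and $\phi=\mathcal O(r^{-1}\ts^{-2+})$ near $\mathscr I^+$, then $\phi^p=\mathcal O(\ts^{-3p+})$ on compact sets and $\phi^p=\mathcal O(r^{-p}\ts^{-2(p-1)+})$ in the wave zone; for $p\geq 4$ this is fast enough that the source is integrable in $\ts$ on compact sets and conormal enough at $\mathscr I^+$ to feed into the resolvent machinery.

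The first step is to justify this a priori assumption by a bootstrap, obtaining the $\ts^{-3+}$ decay on compact $K\Subset X^\circ$ together with matching behavior at $\mathscr I^+$, $I^+$ and their corners. Starting from the weaker $\ts^{-2+}$ decay available from Theorem~\ref{thm:global main} or from the classical local energy decay theory, one iterates Duhamel's formula, each round gaining a power of $\ts$ thanks to $p\geq 4$, and stops once the linear threshold is saturated. This places $f(\ts,x):=b(\ts,x)\phi^p$ in the conormal forcing class on $M_+$ to which the generalized low-energy resolvent expansion applies.

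The second step is the extraction of the leading coefficient. Split $\phi=\phi_{\rm lin}+\phi_{\rm forced}$ with $\phi_{\rm lin}$ solving $\Box_g\phi_{\rm lin}=0$ from the Cauchy data and $\phi_{\rm forced}$ the retarded Duhamel integral of $f$. Mellin-transforming in $\ts$ and using the low-energy expansion $\wh\Box_g(\sigma)^{-1}\sim\sigma^{-1}A_{-1}+A_0+\sigma A_1+\cdots$ at $\sigma=0$ coming from spectral admissibility, one evaluates the resolvent against $\wh f(0,x)$ and against the Cauchy-data contribution repackaged as a forcing on $\Sigma_0$ via \eqref{eqn:fhat(0) defn Sec 6}--\eqref{the forcing f Sec 6}. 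The two contributions combine into the single leading term $2 d_X(\wh f_0)\,\ts^{-3}$, with $d_X$ the functional \eqref{eq:d(f)}. The remainder bound $\ts^{-4-j+}$ on $\pa_\ts^j\pa_x^\beta$ derivatives then follows from the next-order term in the resolvent expansion, bounds on $\pa_\sigma^k\wh f$, and commuting the vector fields through the equation, using that the nonlinearity preserves the relevant symbolic behavior once the base decay is secured.

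The main obstacle is the self-referential definition of $d_X(\wh f_0)$: the coefficient depends on $\phi$ itself through its radiation fields at $\mathscr I^+$ and through $\int_0^\infty b(\ts,x)\phi^p\,d\ts$. The resolution is a two-pass bootstrap --- first obtain $\phi=\mathcal O(\ts^{-3+})$, which (using $p\geq 4$) makes the integral defining $\wh f_0$ absolutely convergent and the radiation-field integrals well defined; then read off $d_X(\wh f_0)$ and upgrade the remainder to $\ts^{-4+}$ via the next order in the resolvent expansion. A secondary difficulty is propagating the expansion uniformly across the corners in Figure~\ref{fig:M+}, which I would handle by working throughout in the conormal function spaces on $M_+$ introduced for Theorem~\ref{thm:global main_body text}.
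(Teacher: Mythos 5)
Your overall architecture (recast the nonlinearity as a forcing, pass to the spectral family, extract the leading tail from the low-energy behavior of $\wh\Box(\sigma)^{-1}\wh f(0)$) matches the paper's, but two essential mechanisms are missing, and as written the argument would not produce the $t_*^{-3}$ term. First, the claimed expansion $\wh\Box(\sigma)^{-1}\sim\sigma^{-1}A_{-1}+A_0+\sigma A_1+\cdots$ is not the right object: mode stability rules out a pole at $\sigma=0$, and, more importantly, any part of $\wh{\phi_0}(\sigma)$ that is \emph{smooth} in $\sigma$ on $(-1,1)$ inverse-Fourier-transforms to a Schwartz function of $t_*$ and contributes nothing to the tail. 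The entire $t_*^{-3}$ asymptotic comes from a log-singular term $\sigma^2\log(\sigma+i0)$ in the resolvent expansion (Theorem \ref{thm:resolvent expansion Sec 6} and \eqref{eq:phi0hat Sec 6}), whose inverse Fourier transform is $\sim t_*^{-3}$; a Taylor/Laurent series in $\sigma$ cannot see this. (Also, the transform used is the Fourier transform in $t_*$, not the Mellin transform.)

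Second, you do not address why the $t_*^{-2}$ term is absent for $p\ge 4$. The commutator terms $[\Box_g,\chi]\phi$ in \eqref{the forcing f Sec 6} still produce a $c(\omega)\rho^3$ leading term in $\wh f_0$ (see \eqref{fhat(0) asymp Sec 6}), exactly the term that generated $2c_0 t_*^{-2}$ in the cubic case; ``the nonlinearity is short range'' does not dispose of it. The paper's key observation is the cancellation $\int_{\S^2}c(\omega)\,d\omega=0$ \eqref{c(omega) vanishing avg Sec 6}, which follows from the explicit second radiation field formula \eqref{eq:R2 Sec 6} and $\int_{\S^2}\Delta_\omega h\,d\omega=0$; this forces the leading singularity to appear only at order $\sigma^2\log\sigma$ and requires a generalized resolvent expansion for inputs $c(\omega)\rho^3+d(\omega)\rho^4+\A^{4+\alpha}(X)$ with vanishing average, as well as a third-order radiation field expansion \eqref{radiation field 3rd order} to compute $d(\omega)$ and hence $d_X(\wh f_0)$. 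Finally, your bootstrap for the a priori decay starts from Theorem~\ref{thm:global main}, which concerns $p=3$; the correct input is the pointwise decay \eqref{phi conormal est high order} supplied by Theorem \ref{thm:power_nonlinearity}, and once that is taken as a hypothesis the ``self-referentiality'' of $d_X(\wh f_0)$ you worry about disappears, since the coefficient is only read off a posteriori.
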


\begin{remark}
We note that the leading order term $2d_X(\wh{f}_0) t_*^{-3}$ vanishes trivially when restricted to the Minkowskian spacetime, $\Box\phi = b\phi^p$, for $p\geq 5$. This follows from \eqref{eq:d(f)}, noting that $m=0$, and from the fact that $d(\omega)$ defined in \eqref{eq:d(omega) defn Sec 6} has vanishing average over $\mathbb{S}^2$ due to the absence of all correction terms in the metric (see Lemma \ref{lem:rewritingBoxg}) in the Minkowskian case. Nevertheless, our approach can be extended to identify the genuine leading-order term, which does not vanish trivially, in the Minkowski setting. For instance, the late-time asymptotic behavior of solutions to $\Box\phi = b\phi^5$ has a leading order term $C t_*^{-4}$. This analysis requires higher-order expansions of both the radiation fields (see Lemma~\ref{lem:rad field small time}) and the resolvent (see Lemma~\ref{thm:resolvent expansion Sec 6}). We refer the reader to Remark~\ref{remark:Minkowski techs} for further technical details.
\end{remark}

Theorem \ref{thm:highorder} complements the other two theorems by providing the asymptotic behavior of solutions for higher-order power nonlinearities. The leading order term $t_*^{-3}$ in the asymptotic expansion is consistent with a sharp version of Price's law, though with the constant $2d_X(\wh{f}_0)$, and decays faster than the $t_*^{-2}$ leading term obtained for the cubic nonlinearity. 

Although we assume in Theorems \ref{thm:compactmain intro}--\ref{thm:highorder} that the initial data are smooth and compactly supported, our approach applies to more general initial data which admits a partial expansion in the formalism of conormal spaces; for more details, please see Remark \ref{rmk:general intial data p=3} and Remark \ref{rmk:p=4 or larger}. 

The proofs rely on a careful analysis of the radiation fields associated with the solutions and the Fourier transform to study the low-frequency behavior of the resolvent operator. The key ingredients include the derivation of high-order radiation field expansions, which allow us to identify the leading-order terms in the asymptotic behavior of the solutions, and the resolvent expansion with more general inputs than in \cite[Theorem 3.1]{Hin22}, see Theorem \ref{thm:resolvent expansion Sec 6}. We remark that these results sharpen previous decay estimates by \cite{Looi22,Looi22.2,Toh2022} into precise asymptotics with explicit leading-order terms. 

The present work builds on the linear analysis in \cite{Hin22} and on decay estimates for semilinear problems in \cite{Looi22,Looi22.2,Toh2022} (though \cite{Looi22.2} also studied quasilinear problems). The additional ingredients needed for the nonlinear asymptotics are the following:
\begin{enumerate}
\item We show that the nonlinear forcing term $a(t_*,x)\phi^p$ admits a partial expansion near $\rho=0$ with a conormal remainder. This uses higher-order radiation field expansions derived from pointwise decay bounds.
\item We extend the low-energy resolvent expansion of \cite{Hin22} to such conormal inputs. This yields the singular low-frequency terms responsible for the leading tail and provides computable formulas for the leading coefficients in the expansions.
\item We combine the resolvent expansion with the radiation field analysis to obtain complete leading-order asymptotics on $M_+$ and, on compact sets, the $t_*^{-2}$ tail for $p=3$ and the $t_*^{-3}$ tail for $p\ge 4$.
\end{enumerate}

  These novelties %
give insight into the analysis of more general nonlinear wave equations. While this paper focuses on power-type nonlinearities as a canonical example, the core of our methodology is robust: the systematic expansion of the low-energy resolvent and the derivation of high-order radiation fields using b-calculus. We expect that these techniques can be adapted to treat a broader class of nonlinear problems, such as those with derivative nonlinearities or more complex algebraic structures.

\medskip

\textit{Potential future extensions:}
\begin{enumerate}
  \item \textbf{Genericity of non-vanishing leading coefficient.} For the cubic nonlinearity, the explicit formula \eqref{eq:leadcoefficient} for the coefficient $c_0$ of the leading $t_*^{-2}$ term along $\mathcal{K}^+$ and $I^+$ expresses $c_0$ as a nonlinear functional of the radiation field $\phi_{\mathrm{rad},1}$ and of the angular coefficients $c_2,d_1$ in the spatial asymptotics. Since $\phi_{\mathrm{rad},1}$ is itself determined by the Cauchy data on $\Sigma$, a natural open problem is to understand genericity of the condition $c_0\neq 0$ at the level of initial data, for instance by showing that in the class of small asymptotically flat data considered here the subset of data whose associated solution has $c_0\neq 0$ is open and dense (or at least residual) in the corresponding topology.

  \item \textbf{Relation to strong cosmic censorship (SCC).} We work on stationary, asymptotically flat exteriors, impose Cauchy data supported away from the event horizon, and derive complete late-time asymptotics in this exterior region. We do not study horizon crossing or interior evolution, and we do not address the inextendibility statements required by strong cosmic censorship, which are potential extensions of our results. Nevertheless, exterior tails and quantitative decay along the event horizon are standard inputs in modern SCC analyses: they provide lower and upper bounds for horizon fluxes which, once propagated across the horizon, drive blow-up/instability at the Cauchy horizon in symmetry-reduced models; see the works \cite{Luk2019a,Luk2019b,LukOh,DafermosLuk2017}. In that sense, the present paper provides a refined exterior ingredient for a model problem in nonlinear wave equations that is relevant to SCC, as the explicit tail coefficients (such as $c_0$ and $d_X(\widehat f_0)$) and the detailed asymptotics at $\mathcal K^+$, $I^+$, and $\mathscr I^+$ give more than just decay rates -- they give a precise profile of the exterior field. 

  \item \textbf{More general nonlinearities than the power-type.} In this paper we restrict to power-type nonlinearities $a(t_*,x)\phi^p$ with $p\ge 3$. The arguments only use that the nonlinearity is a fixed polynomial in $\phi$ with $p\ge 3$ and that the coefficients are sufficiently regular and slowly varying in $t_*$. It is natural to ask whether analogous asymptotics hold for semilinear equations
  \[
    \Box_g\phi = \mathcal N(t_*,x,\phi),
  \]
  where $\mathcal N$ is smooth in $(t_*,x)$, at least $C^\infty$ (or analytic) in $\phi$ near $\phi=0$, and satisfies $\mathcal N(t_*,x,0)=\partial_\phi\mathcal N(t_*,x,0)=0$. The structure of the same proofs in this paper suggests that, in such a setting, the leading late-time behaviour on compact sets should again be governed by the first non-vanishing term in the Taylor expansion of $\mathcal N$ in $\phi$ (cubic versus $p\ge 4$), but we do not pursue this extension here. A further direction, also outside the scope of the present work, would be to allow dependence on derivatives of $\phi$, or even quasilinear problems in which the metric depends on the solution. This would require a further detailed calculation of the radiation fields and its interplay with the nonlinear structure. We also remark that our approach still applies while assuming weaker pointwise decay estimates of the solution, which is therefore expected to treat more general nonlinear wave equations. 
\end{enumerate}

\subsection{Related work}
The work \cite{Hin22} by Hintz proved a sharp version of Price's law, giving precise asymptotics for solutions of the linear wave equation $\Box_g u = 0$ on a class of stationary asymptotically flat spacetimes, including subextremal Kerr black holes. Hintz's approach is based on a detailed analysis of the low energy resolvent, using geometric microlocal methods in Vasy's perspective \cite{VasyLAPLag,VasyLowEnergyLag}. The central innovation in \cite{Hin22} is the algorithmic method for obtaining an expansion of the resolvent at zero energy, which allows for the computation of the explicit leading order term in the asymptotic expansion of the solution.

The works \cite{VasyLAPLag, VasyLowEnergyLag} provide the foundational framework for the analysis in \cite{Hin22}. These papers develop a new approach to the limiting absorption principle and resolvent estimates near zero energy on asymptotically conic spaces, using Lagrangian methods. This geometric perspective allows for a unified treatment of various asymptotically flat-like settings and provides insight into the behavior of waves at low frequencies.

\vspace{.2cm}

The study of nonlinear wave equations on asymptotically flat spacetimes builds upon extensive research in partial differential equations and mathematical relativity. Foundational work on nonlinear hyperbolic equations includes \cite{K1,K2,K3,Ch,John}, establishing global existence results for small initial data or blowup results. 

Regarding global existence of solutions, we provide several remarks specifically on the energy-critical problem; refer to Appendix~\ref{app:GE} for further details. For the energy-critical problem with large and smooth initial data, \cite{ShatahStruwe1993} established a global existence result. During the same period, global existence results were established for solutions with different notions of regularity, including weak solutions, which are less regular than smooth solutions. \cite{IbrahimMajdoub} proved global existence for wave operators with time-independent and variable coefficients, while \cite{LaiZhou} proved similar results for time-dependent coefficients.

The study of decay estimates for wave equations dates back to \cite{Lax1963} and extends to more recent times \cite{K1,K2,K3,KP,Ch}. Even more recently, contributions have been made by \cite{Looi21,Looi22,Looi22.2,Looi22.3,Looi2023,LooiTohaneanu,MiaoPeiYu,Mor24,MW23,MTT,OliSte,Pec,Schlag2021,Schlue2013,Sha,SW,Tataru2013}. Also included in this list are some works that study wave equations on black hole spacetimes. Earlier contributions for power-type wave equations include those by \cite{GeorgievLindbladSogge1997} and, for the quintic nonlinearity, \cite{Gri}. For a spectral perspective on decay estimates, see \cite{BoucletBurq2021}.
In \cite{LooiTohaneanu}, the authors prove global existence and generically sharp pointwise decay for small‐data nonlinear wave equations with a null‐condition nonlinearity, allowing time‐dependent variable coefficients in both the nonlinear and linear terms, and requiring only weighted Sobolev space initial data rather than compact support.

Wave equations on black hole spacetimes, motivated by the black hole stability problem, have been studied extensively. Key contributions include \cite{AB,DR09,DR10,DR11,DR13,DRS,KS,KS1,LindbladTohaneanu2018,LindbladTohaneanu2020,Luk2013,MTT,Tataru2013,TT,Toh2022,Hin22,Angelopoulos2018,Angelopoulos2021a,Angelopoulos2021b,Angelopoulos:2018uwb}. These works have addressed both linear and nonlinear aspects of wave propagation on various black hole backgrounds.

Price's law \cite{Price} has been a central focus in understanding late-time asymptotics. Subsequent contributions from physicists, including \cite{Gundlach1994a,Gundlach1994b}, provided important numerical results and insight. Late time tails in this context have been explored by \cite{Bizon2007b,Bizon2009a,Bizon2009b,Bizon2010,Szpak2008,Szpak2009}. Rigorous mathematical investigations of Price's law and wave tails on black hole spacetimes include \cite{Angelopoulos2018,Angelopoulos2021a,Angelopoulos2021b,DafermosRodnianski2005,MTT,Tataru2013,Hin22,MaZhang2023TAMS,LooiTohaneanu}. \cite{Donninger2011,Donninger2012,FinsterKamranSmollerYau2006} use separation of variables to control the spectral measure for low frequencies. Works studying generalized versions of Price's law include \cite{Looi21,MW23}. The methods by Angelopoulos-Aretakis-Gajic \cite{Angelopoulos2018,Angelopoulos2021a,Angelopoulos2021b} and Hintz \cite{Hin22} obtain sharp asymptotic expansions and extend to higher spin equations on subextremal Kerr black holes. Recent advancements in the study of the Teukolsky equation have been made by \cite{MaZhang2023CMP,Millet,Shlapentokh-Rothman2020a,Shlapentokh-Rothman2023}.

Sharp asymptotics and precise behavior have been approached from both spectral and physical space perspectives. Spectral and microlocal methods have been developed in \cite{Baskin2015,Baskin2018,Dyatlov2015,Hintz2023,Hintz2023b,Millet,VasyMicroKerrdS,Hintz2023a,Haefner2021}, with low energy resolvent estimates studied in \cite{BH,GuillarmouHassell2008,GuillarmouHassell2009,GuillarmouHassellSikora2013,VasyLowEnergyRiemannian}. The low frequency behavior of resolvents, important for long-time asymptotics, has been investigated in \cite{BH,VasyLowEnergyRiemannian,VasyLowEnergyLag,Sussman} and the present work. Purely physical space approaches, including \cite{LukOh,Luk2015,LukOh2}, are providing complementary insights into precise asymptotics.

Related models and equations have also been studied extensively. A non-exhaustive list of these include the Klein-Gordon equation \cite{Gajic2024,Pasqualotto2023,Sussman2023,Shlapentokh-Rothman2024}, wave equations with inverse-square potentials \cite{Gajic2023,Baskin2022,Hintz2023}, spin fields \cite{Ma2022b}, massless Dirac fields on a Schwarzschild background \cite{Ma2022c}, and the spherically symmetric Maxwell-charged-scalar-field equations on a Reissner-Nordström exterior \cite{Van2022}, and other related nonlinear models \cite{Dafermos2022}. Work on Schrödinger operators \cite{Sussman,LooiSussman} and the Skyrmion model \cite{Bizon2007a} has further broadened our understanding of wave-like equations on curved backgrounds.

The strong cosmic censorship conjecture and the structure of black hole interiors have been subjects of intensive research in general relativity. The work \cite{DafermosLuk2017} made significant progress on the $C^0$ stability of the Kerr Cauchy horizon. The works \cite{Luk2019a,Luk2019b} proved results on strong cosmic censorship in spherical symmetry for two-ended asymptotically flat initial data.  \cite{Sbierski2023} showed instability of the Kerr Cauchy horizon under linearized gravitational perturbations. \cite{Giorgi2022a,KS1} focus on the nonlinear stability of slowly rotating Kerr black holes. 
Strichartz estimates and related techniques have been developed in \cite{MarzuolaMetcalfeTataruTohaneanu2010,TohaneanuKerrStrichartz,SmSo}. Integrated local energy decay estimates, also known as Morawetz estimates, have been studied in many works such as \cite{MST,MT}, with a spectral perspective provided in \cite{VasyWunschMorawetz}.  These estimates are not the focus of the present work and we do not provide an exhaustive list. A robust complement to integrated local energy decay estimates using vector field methods was developed in \cite{DR rp}, with subsequent improvements in \cite{Mos16}. High energy resolvent estimates have been explored in \cite{VasyZworskiSmcl,Vasy2011,VasyMicroKerrdS}, with \cite{Vasy2011} being a special case of \cite{VasyMicroKerrdS} that focuses on asymptotically hyperbolic spaces, while the latter also addresses Lorentzian problems. The paper \cite{Looi2025} explores how resolvent estimates relate to Morawetz estimates in the context of wave equations.

\subsection{Organization} The paper is organized as follows. In Section 2, we introduce the necessary geometric and analytic preliminaries. Section 3 is devoted to the study of the mapping properties of the resolvent operator. In Section 4, we derive the asymptotic expansions for the cubic nonlinearity in compact spatial sets before deriving global asymptotic expansions in Section 5. In Section 6, we prove long time asymptotic expansions in compact spatial sets for equations with the higher-order power nonlinearities. Appendix A presents pointwise decay estimates for solutions of power-nonlinearity wave equations on curved spacetimes. These results establish the hypotheses required for the theorems in the main text, at least in various special cases.  %

\subsection{Acknowledgements}The first author wishes to express his gratitude to Jared Wunsch for his valuable input and guidance on various aspects of this project. We are very appreciative of Mihai Tohaneanu for offering insightful comments and constructive feedback.

\section{Preliminaries}\label{sec:prelim}

In this section, we introduce the necessary geometric and analytic preliminaries, including stationary and asymptotically flat metrics, the b-calculus, and conormal spaces. While our exposition closely follows \cite{Hin22}, we also present new and detailed arguments, such as Propositions~\ref{prop:bODE0} and \ref{prop:bODEconormal}, which may help the reader develop a deeper understanding of b-differential operators and conormal spaces. We begin with the radial compactification of $\RR^3$, which plays a fundamental role in the analysis of asymptotic behavior near spatial infinity.
\begin{definition}[Spatial compactification]
    \label{DefACompact}
      The compactified spatial manifold 
      \[
        X := \ol{\R^3} = (\RR^3\sqcup([0,\infty)_\rho\times\Sph^2))/\sim
      \]
      is the radial compactification of $\R^3$, where the equivalence relation $\sim$ identifies points $r\omega\in\R^3$ for $r>0$, $\omega\in\Sph^2$ with $(\rho,\omega)$, $\rho=r^{-1}$.
    \end{definition}

\noindent  
It follows that the smooth functions on $X$ are those functions in $\CI(\RR^3)$ which in the region $\{r>1\}$ are smooth in $\rho=r^{-1}$ and the spherical variables. Near the boundary $\pa X\cong\S^2$, we shall work in the collar neighborhood $[0,\epsilon)_\rho\times\S^2$, $\epsilon>0$. Throughout this paper, we use $\mathbb{S}^2$ to denote the boundary of the spatial manifold $X$. Thus $\CI(\partial X)$ denotes the space of smooth functions on $\mathbb{S}^2$. We primarily use $\mathbb{S}^2$ notation, though $\partial X$ can be substituted where $\mathbb{S}^2$ appears.

We recall the definition of a scattering tangent bundle in order to introduce our assumptions on the metric.

\begin{definition}[Scattering tangent bundle]\label{scattering tangent bundle}
The scattering tangent bundle ${}^\textrm{sc} T X$ is the unique vector bundle over $X$ characterized as follows:

\begin{enumerate}
    \item Its smooth sections over $X^\circ$ are smooth vector fields $V$ of the form
   \[ V = a\partial_r + r^{-1}\sum_{j=1}^3 b_j \Omega_j \]
   in $(1,\infty)_r\times\mathbb{S}^2$, where $a, b_j\in\CI(X)$.
Here, $\{\Omega_1, \Omega_2, \Omega_3\} \subset \mathcal{V}(\mathbb{S}^2)$ are the rotation vector fields that span $T_p \mathbb{S}^2$ at each $p\in\mathbb{S}^2$.

\item In local coordinates $(\theta,\varphi)$ on $\mathbb{S}^2$, a section $V$ takes the form
   \[ V = a\partial_r + r^{-1} \tilde{b}_1 \partial_\theta + r^{-1} \tilde{b}_2 \partial_\varphi \]
   where $\tilde{b}_1, \tilde{b}_2 \in \CI(X)$.
\end{enumerate}
\end{definition}

We make the same assumptions on the metric as in \cite{Hin22}:
\begin{definition}\label{def:metric}
A smooth Lorentzian metric $g$ on $\R_{\ts} \times X^\circ$ is called \textit{stationary and asymptotically flat} (with mass $m\in\RR$) if $\pa_{\ts}$ is a Killing vector field, and if
    \begin{enumerate}
    \item
    $d\ts$ is everywhere future timelike, namely, $g^{00}<0$;
    \item
    the coefficients of the dual metric
    $$g\inv = g^{00} \pa_{\ts}^2 + 2\pa_{\ts} \otimes_s g^{0X} + g^{XX}$$
    satisfy the following conditions, with $\rho=r^{-1}$,
    \begin{align*}
    g^{00} &\in \rho^2 \calC^\iy(X),\\
    g^{0X} &\in -\pa_r + \rho^2 \calC^\iy(X; {}^\textrm{sc}TX),\\
    g^{XX} &\in (1-2m\rho)\pa_r^2 + \rho^2 g_{\S^2}\inv + \rho^2 \calC^\iy(X;S^2\, {}^\textrm{sc} T X).
    \end{align*}
    Here $g_{\S^2} = d\theta^2 + \sin^2\theta d\varphi^2$ is the standard metric on $\S^2$.
    \end{enumerate}
\end{definition}

The concept of b-vector fields or b-differential operators is essential in our analysis, and we recall the following definition from \cite{Hin22}:
\begin{definition}\label{b vectors}
The space $\cV_{\rm{b}}(X)$ of \textit{b-vector fields} on $X$ consists of all smooth vector fields $\xG_\text{b}$ on $X$ that are tangent to $\pa X\cong\S^2$. This means that $\Gamma_{\text{b}} = a\rho\pa_\rho + \sum_{j=1}^3 b_j \Omega_j$ in $(1,\infty)_r\times\S^2$, with $a, b_j\in\CI(X)$. 

For $m\in\N$ let $\Diffb^m(X)$ denote the space of b-differential operators of order at most $m$. Explicitly,
\begin{equation}
\Diffb^m(X) = \left\{ \sum_{|\alpha| \leq m} a_\alpha(x) V_1^{\alpha_1} \cdots V_k^{\alpha_k} : a_\alpha \in \CI(X), V_i \in \mathcal{V}_b(X) \right\},
\end{equation}
where $\mathcal{V}_b(X)$ is the space of b-vector fields on $X$, and $\alpha = (\alpha_1, \ldots, \alpha_k)$ is a multi-index with $|\alpha| = \alpha_1 + \cdots + \alpha_k \leq m$.
\end{definition}

\noindent
For a metric $g$ given in Definition \ref{def:metric}, we can rewrite its corresponding wave operator: 
\[
\Box_g = -|g|^{-1/2}\pa_\mu (|g|^{1/2} g^{\mu\nu} \pa_\nu)
\] 
using b-differential operators as follows:
\begin{lemma}
\label{lem:rewritingBoxg}
Let $g$ be a stationary and asymptotically flat metric (see Definition \ref{def:metric}). Let $\Box$ denote $\Box_g$. Then
$\Box = \wh\Box(0) - g^{00}\pa_{\ts}^2 - 2\rho \pa_{\ts} Q$ with $\wh\Box(0) \in\rho^2\Diffb^2(X)$ and $Q \in \Diffb^1(X)$.
Near $\partial X =\{\rho=0\}$,
\begin{equation}
\label{eqn:Q=Q0+Q'}
    Q = Q_0 + \rho^2\widetilde{Q},\quad Q_0=\rho \pa_\rho - 1, \quad \widetilde{Q} \in \Diffb^1(X);
\end{equation}
\begin{equation}
\label{eqn:Boxhat(0)}
    \rho^{-2} \wh\Box(0) =  L_0 + \rho L_1 + \rho^2 L_2, \quad L_2 \in  \Diffb^2(X) ;
\end{equation}
\begin{equation}
\label{eqn:operator L0}
    L_0 = -(\rho\pa_\rho)^2 + \rho\pa_\rho + \Delta_\omega,
\end{equation}
\begin{equation}
\label{eqn:operator L1}
	L_1 = 2m(\rho\pa_\rho)^2,
\end{equation}
where $\Delta_\omega = -(\sin\theta)\inv \pa_\theta \sin \theta \pa_\theta - (\sin\theta)^{-2} \pa_\varphi^2$ is the positive and spherical Laplacian. 
\end{lemma}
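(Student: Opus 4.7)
The plan is to derive the decomposition directly from the coordinate formula for $\Box_g$ and then analyze the spatial operators using the scattering-type expansions of $g$ in Definition~\ref{def:metric}. Starting from $\Box_g\phi=-|g|^{-1/2}\pa_\mu(|g|^{1/2}g^{\mu\nu}\pa_\nu\phi)$ and splitting the sum over $\mu,\nu\in\{\ts,x\}$ into four blocks, the stationarity relations $\pa_{\ts}|g|^{1/2}=0$ and $\pa_{\ts}g^{\mu\nu}=0$ let me commute every $\pa_{\ts}$ outside and kill all $\ts$-derivatives of metric quantities, producing
\begin{equation*}
\Box = -g^{00}\pa_{\ts}^2 - \bigl(2g^{0j}\pa_j+|g|^{-1/2}\pa_j(|g|^{1/2}g^{j0})\bigr)\pa_{\ts} - |g|^{-1/2}\pa_j(|g|^{1/2}g^{jk}\pa_k).
\end{equation*}
The third summand is $\wh\Box(0)$ by definition. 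It then suffices to (i) factor the bracket coefficient of $\pa_{\ts}$ as $2\rho Q$ with $Q$ as claimed, and (ii) verify $\wh\Box(0)\in\rho^2\Diffb^2(X)$ together with the expansion in~\eqref{eqn:Boxhat(0)}.

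Both steps are handled by translating Definition~\ref{def:metric} into b-notation near $\pa X$ using $\pa_r=-\rho^2\pa_\rho$, the consequence $\pa_r^2=\rho^2[(\rho\pa_\rho)^2+\rho\pa_\rho]$, and the fact that any smooth scattering vector field equals $\rho$ times a smooth b-vector field. A brief determinant expansion from the block structure in Definition~\ref{def:metric} yields $|g|^{1/2}=r^2\sin\theta\,(1+\calO(\rho^2))$; the absence of an $\calO(\rho)$ correction will be crucial for $L_1$. For the coefficient of $\pa_{\ts}$: since $g^{0X}=-\pa_r+\rho^2\calC^\iy(X;{}^{\textrm{sc}}TX)$, we have $2g^{0j}\pa_j=2\rho\cdot\rho\pa_\rho+2\rho\cdot\rho^2\Diffb^1(X)$, while the divergence piece evaluates to $-2\rho+\rho^3\calC^\iy(X)$ at leading order (the angular divergence being $\calO(\rho^3)$), so the bracket equals $2\rho(\rho\pa_\rho-1)+2\rho\cdot\rho^2\widetilde Q$ for some $\widetilde Q\in\Diffb^1(X)$. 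For $\wh\Box(0)$, at leading $\rho^2$-order the symbol contribution $-g^{jk}\pa_j\pa_k$ and the divergence correction combine, with the angular divergence supplying the $-\rho^2\cot\theta\,\pa_\theta$ that completes $\Delta_\omega$ and the radial divergence flipping the sign of $\rho\pa_\rho$, to produce $\rho^2L_0$ with $L_0$ as stated.

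The computation of $L_1$ involves a delicate cancellation: the Schwarzschild correction $-2m\rho$ in $g^{rr}$ contributes $2m\rho^3[(\rho\pa_\rho)^2+\rho\pa_\rho]$ to $-g^{jk}\pa_j\pa_k$, while the same factor entering $|g|^{1/2}g^{rr}$ contributes $-2m\rho^3\cdot\rho\pa_\rho$ to the divergence; the two $\rho\pa_\rho$ pieces cancel and leave $\rho^3\cdot 2m(\rho\pa_\rho)^2=\rho^3L_1$. All remaining terms---coming from the $\rho^2\calC^\iy(X;S^2\,{}^{\textrm{sc}}TX)$ part of $g^{XX}$, scattering corrections in $g^{0X}$, and the $\calO(\rho^2)$ correction to $|g|^{1/2}/r^2\sin\theta$---lie in $\rho^4\Diffb^2(X)$ and hence constitute $\rho^4L_2$ with $L_2\in\Diffb^2(X)$. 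The main obstacle is the careful $\rho$-order bookkeeping across three sources (principal symbol, divergence term, and scattering-to-b conversion) together with the cancellation underlying $L_1$, which is possible only because $|g|^{1/2}$ has no $\calO(\rho)$ correction---a fact that the short determinant expansion confirms.
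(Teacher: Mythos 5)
Your proof is correct and is exactly the direct coordinate computation that the paper defers to (\cite[Lemma 2.7]{Hin22}): splitting $\Box_g$ by stationarity, converting $\pa_r$ to $-\rho(\rho\pa_\rho)$, and tracking $\rho$-orders in the symbol and divergence contributions. You also correctly isolate the two genuinely delicate points — that $|g|^{1/2}=r^2\sin\theta\,(1+\calO(\rho^2))$ has no $\calO(\rho)$ correction (the $-2m\rho$ in $g^{rr}$ enters the determinant only multiplied by $g^{00}=\calO(\rho^2)$), and the resulting cancellation of the $2m\rho^3\,\rho\pa_\rho$ terms between $-(1-2m\rho)\pa_r^2$ and the divergence term, which is precisely what yields $L_1=2m(\rho\pa_\rho)^2$ with no first-order part.
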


\noindent
We omit the proof here and refer the reader to \cite[Lemma 2.7]{Hin22}.

We furnish the compactified space $X$ with a volume density $|dg_X|$, which is related to the spacetime volume density $|dg|$ by the equation $|dg| = |dg_X| |dt_*|$. Here, $|dg|$ is the natural volume density associated with the metric $g$. We define the corresponding $L^2$ space on $X$ as $L^2(X) := L^2(X; |dg_X|)$. This setting allows us to introduce the b-Sobolev spaces.

\begin{definition}
\label{def:bSobolev}
  On the spatial manifold $X$, we introduce the following function spaces:
  \begin{enumerate}
  \item The \textit{b-Sobolev spaces} $\Hb^s(X)$ for $s \in \mathbb{R}$ are defined as follows:
    \begin{itemize}
      \item For $s = 0$, set $\Hb^0(X) = L^2(X)$.
      \item For positive integers $s$, define $\Hb^s(X)$ as the set of all $u \in L^2(X)$ such that $Au \in L^2(X)$ for every $A \in \Diffb^s(X)$.

      We equip $\Hb^s(X)$ with the Hilbert space norm
    \[
      \|u\|_{\Hb^s(X)}^2 = \sum_{j=0}^{s} \sum_k \|A_{jk}u\|_{L^2(X)}^2,
    \]
    where $\{A_{jk}\}$ is a finite set of operators spanning $\Diffb^j(X)$ over $\mathcal{C}^\infty(X)$. 
      \item For non-integer $s > 0$, define $\Hb^s(X)$ by interpolation between the integer cases.
      \item For $s < 0$, define $\Hb^s(X)$ as the dual space of $\Hb^{-s}(X)$.
    \end{itemize}
  
  \item For a small parameter $h > 0$, we define the \textit{semiclassical b-Sobolev spaces} $\Hbh^s(X)$. These share the same underlying vector space as $\Hb^s(X)$ but are equipped with the modified norm
    \[   \|u\|_{\Hbh^s(X)}^2 = \sum_{j=0}^s \sum_k \|h^j A_{jk}u\|_{L^2(X)}^2. \]
    This norm scales each b-derivative by a factor of $h$.
  
  \item The \textit{weighted b-Sobolev spaces} $\Hb^{s,\ell}(X)$ and $\Hbh^{s,\ell}(X)$ for $s, \ell \in \mathbb{R}$ are defined as
    \[
      \Hb^{s,\ell}(X) = \rho^\ell \Hb^s(X), \quad \Hbh^{s,\ell}(X) = \rho^\ell \Hbh^s(X),
    \]
    with respective norms
    \[
      \|u\|_{\Hb^{s,\ell}(X)} = \|\rho^{-\ell}u\|_{\Hb^s(X)}, \quad \|u\|_{\Hbh^{s,\ell}(X)} = \|\rho^{-\ell}u\|_{\Hbh^s(X)}.
    \]
  \end{enumerate}
\end{definition}
\begin{remark}
The operators $A_{jk}$ in the definition of the $\Hb^s(X)$ norm can be specifically chosen as all up to $j$-fold compositions of $\partial_{x^i}$ and $x^i\partial_{x^j}$ for $i,j=1,2,3$, since these vector fields span $\mathcal{V}_b(X)$ over $\mathcal{C}^\infty(X)$.
\end{remark}

\noindent
We next introduce the notion of conormal spaces, which is closely related to the b-Sobolev spaces and is essential in our future analysis.

\begin{definition}\label{def:conormalspace} %

The space of conormal functions of order $\alpha \in \mathbb{R}$ is:
    $$\mathcal{A}^\alpha(X) := \{ u \in \CI(X) : |(r\partial_r)^j \Omega^\beta u| \leq C_{j,\beta} r^{-\alpha}, \quad \forall j \in \N \text{ and multi-index }\beta \},$$
\end{definition}

\noindent
The following proposition establishes inclusions between b-Sobolev spaces and conormal spaces. While this connection is discussed in \cite{Hin22}, we provide a more detailed proof for the reader's convenience.

\begin{proposition}
\label{prop:Sobolev embedding}
We define
\[
  \Hb^{s,\ell-}(X) := \bigcap_{\eps>0} \Hb^{s,\ell-\eps}(X),\quad
  \cA^{\alpha-}(X) := \bigcap_{\eps>0} \cA^{\alpha-\eps}(X).
\]
Then we have
\begin{equation}
\label{EqASobolevEmb}
  \Hb^{\infty,\ell}(X) \subset \cA^{\ell+3/2}(X) \subset \Hb^{\infty,\ell-}(X);
\end{equation}
\end{proposition}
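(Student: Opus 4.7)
My plan is to prove both inclusions by reducing everything, via a logarithmic change of variable near $\partial X$, to standard Sobolev embedding on the cylinder $\R_s\times\Sph^2$ (after conjugating away the weight coming from the b-density). The only nontrivial input is Sobolev embedding; the rest is bookkeeping of weights.

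First I would record that in the collar $(0,\eps)_\rho\times\Sph^2$ the density satisfies $|dg_X|\sim r^2\,dr\,d\omega = \rho^{-4}\,d\rho\,d\omega$ (up to a smooth nonvanishing factor), and that under $s=-\log\rho$ this becomes $e^{3s}\,ds\,d\omega$, while the b-vector fields $\rho\pa_\rho,\Omega_j$ turn into the translation $-\pa_s$ and the rotations $\Omega_j$, i.e.\ honest vector fields on the flat product $\R_s\times\Sph^2$. This is why the relevant Sobolev exponent will be the half-dimension $3/2$.

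For the first inclusion, using $\Hb^{\infty,\ell}=\rho^\ell\Hb^\infty$ and $\cA^{\ell+3/2}=\rho^\ell\cA^{3/2}$ together with $\rho^{-\ell}A\rho^\ell\in\Diffb(X)$ for every $A\in\Diffb(X)$ (since $\rho^\ell$ commutes with angular b-vector fields and $[\rho\pa_\rho,\rho^\ell]=\ell\rho^\ell$), it suffices to prove the case $\ell=0$, i.e.\ $\Hb^\infty(X)\subset\cA^{3/2}(X)$. Given $u\in\Hb^\infty(X)$ and $A\in\Diffb(X)$, set $v=Au$ (which again lies in $\Hb^\infty(X)$ because $\Diffb(X)$ preserves $\Hb^\infty$), and let $w=e^{3s/2}v$ in the collar. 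The conjugation identities $e^{3s/2}\pa_s e^{-3s/2}=\pa_s-\tfrac{3}{2}$ and $e^{3s/2}\Omega_j e^{-3s/2}=\Omega_j$ show that an arbitrary derivative $\pa_s^a \Omega^\beta w$ is a linear combination of $e^{3s/2}$ times b-derivatives of $v$; since $v\in\Hb^\infty(X)$ puts every such b-derivative in $L^2(e^{3s}\,ds\,d\omega)$, we get $\pa_s^a\Omega^\beta w\in L^2(ds\,d\omega)$ for all $a,\beta$. Standard Sobolev embedding on $\R_s\times\Sph^2$ then yields $w\in L^\infty$, i.e.\ $|Au|\lesssim \rho^{3/2}$, away from $\partial X$ the estimate is a local interior estimate. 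This is exactly $u\in\cA^{3/2}(X)$.

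For the second inclusion I would just count weights. Given $u\in\cA^{\ell+3/2}(X)$ and $A\in\Diffb(X)$, we have $|Au|\leq C_A\rho^{\ell+3/2}$. For any $\eps>0$,
\[
\int_X \rho^{-2(\ell-\eps)}|Au|^2\,|dg_X|\;\lesssim\;\int_{\rho<1/2}\rho^{3+2\eps}\cdot\rho^{-4}\,d\rho\,d\omega\;+\;\text{(compact part)}\;<\;\infty,
\]
the boundary integral converging precisely because $-1+2\eps>-1$. Hence $\rho^{-\ell+\eps}Au\in L^2(X)$ for every $A$, i.e.\ $u\in\Hb^{\infty,\ell-\eps}(X)$, and intersecting over $\eps>0$ gives $u\in\Hb^{\infty,\ell-}(X)$.

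The ``main obstacle'' is really just tracking where the $3/2$ comes from and why the second inclusion is inherently $\eps$-lossy: both features are forced by the factor $\rho^{-3}$ in $|dg_X|=\rho^{-3}\,(d\rho/\rho)\,d\omega$, which produces $e^{3s}$ in cylindrical coordinates and the borderline divergence of $\int_0\rho^{-1}\,d\rho$.
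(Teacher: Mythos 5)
Your argument is correct and follows essentially the same route as the paper's proof: pass to $s=-\log\rho$, identify the b-density with $e^{3s}\,ds\,d\omega$ on the cylinder $\R_s\times\Sph^2$, and apply standard Sobolev embedding there, with the exponent $3/2$ arising exactly as you describe. The only difference is that you spell out the weight-counting for the second inclusion, which the paper dismisses with ``similarly''; your computation is the right one.
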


\begin{proof}
Noticing that $|dg_X|$ is comparable to $r^2 dr |d g_{\S^2}|$ and that $\rho\pa_\rho = -\pa_s$ if $\rho = e^{-s}$, we observe that $u\in \Hb^{\infty,\ell}(X)$ implies $ \pa_s^k \pa_{\theta,\varphi}^\beta (\rho^{-\ell} u) \in L^2(X; r^2 dr |d g_{\S^2}|)$, $\forall k,\beta$. Since $r^2 \,dr = r^3 \,ds$, it follows that in the variable $s$ we have
\[
    \rho^{-\ell - \frac{3}{2}} u \in H^\infty (\RR_s\times\S^2) \implies \pa_s^k \pa_{\theta,\varphi}^\beta (\rho^{-\ell - \frac{3}{2}} u) \in L^\infty(\RR_s\times\S^2),\quad\forall k, \beta .
\]
Returning to the variable $r$ or $\rho$, we obtain $u\in \A^{\ell+3/2}(X)$ provided that $u\in\Hb^{\infty,\ell}(X)$. Similarly, one can justify the other inclusion $\A^{\ell+3/2}(X) \subset \Hb^{\infty,\ell-}(X)$ in \eqref{EqASobolevEmb}.
\end{proof}

For future reference, we prove here two results to illustrate the connection between b-differential equations and conormal spaces in a simple setting:
\begin{proposition}[Gain-of-decay]
\label{prop:bODE0}
Let $1<\beta<\alpha$ and $u\in\A^\beta(X)$. If $(\rho\pa_\rho - 1) u = f \in\A^\alpha(X)$, then $u\in\A^\alpha(X)$. I.e., if $u$ solves $(\rho\pa_\rho - 1) u \in\A^\alpha(X)$, then $u$ gains decay.
\end{proposition}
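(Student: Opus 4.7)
My plan is to solve the b-ODE explicitly in $\rho$ near $\partial X$ to get the pointwise decay bound $u \in \rho^\alpha L^\infty$, and then bootstrap to full conormal regularity by commuting the equation with b-vector fields.

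\textbf{Base case.} Near $\partial X$, I would change variables to $s=-\log\rho$, under which $\rho\partial_\rho = -\partial_s$, and multiply through by the integrating factor $\rho^{-1}=e^s$. This recasts the equation as $\partial_s(\rho^{-1}u) = -\rho^{-1}f$. The hypothesis $u\in\mathcal{A}^\beta(X)$ with $\beta>1$ gives $\rho^{-1}u \to 0$ as $\rho\to 0^+$, which supplies the boundary condition at $s=+\infty$ and uniquely pins down the particular solution; note that the homogeneous solution is $\rho$ itself, so its coefficient is forced to vanish precisely because $\beta>1$. Integrating and returning to $\rho$ yields the explicit formula
\begin{equation*}
u(\rho,\omega) = \rho\int_0^\rho (\rho')^{-2} f(\rho',\omega)\,d\rho',
\end{equation*}
with smooth dependence on the spherical variable $\omega$. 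Since $f\in\mathcal{A}^\alpha$ with $\alpha>1$, the integrand is $O((\rho')^{\alpha-2})$ and integrable at $0$, and a direct estimate yields $u\in\rho^\alpha L^\infty(X)$.

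\textbf{Upgrade to conormal regularity.} To show $Au\in\rho^\alpha L^\infty$ for every $A\in\Diffb(X)$, I would induct on the order of $A$. The key observation is that the model operator $P:=\rho\partial_\rho-1$ commutes with the natural generators of $\mathcal{V}_{\mathrm{b}}(X)$ near the boundary, namely $\rho\partial_\rho$ itself and the rotation vector fields $\Omega_j$ on $\mathbb{S}^2$. For any such generator $V$, applying $V$ to the equation gives $P(Vu) = Vf \in \mathcal{A}^\alpha$ with $Vu\in\mathcal{A}^\beta$, so the base case applied to $Vu$ yields $Vu\in\rho^\alpha L^\infty$. For $\rho\partial_\rho u$ specifically, one may alternatively read the decay directly from the equation, since $\rho\partial_\rho u = u + f$. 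Iterating on compositions $A=V_1\cdots V_m$ of these generators, and noting that multiplication by $\mathcal{C}^\infty(X)$ preserves the class, delivers $Au\in\rho^\alpha L^\infty$ for every $A\in\Diffb(X)$, which is exactly $u\in\mathcal{A}^\alpha(X)$.

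The only mildly delicate points are the integrability of $(\rho')^{-2}f(\rho',\omega)$ at $\rho'=0$ and the vanishing of $\rho^{-1}u$ there. Both follow immediately from the chain $1<\beta<\alpha$, which is precisely the condition that rules out the homogeneous solution and makes the explicit integral converge. No substantive obstacle is anticipated beyond this bookkeeping.
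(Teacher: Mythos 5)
Your proof is correct and follows essentially the same route as the paper: the explicit formula $u=\rho\int_0^\rho(\rho')^{-2}f\,d\rho'$ is exactly the paper's $u(s)=e^{-s}\int_s^\infty e^t f(t)\,dt$ after the substitution $\rho=e^{-s}$, with the condition $\beta>1$ used identically to kill the homogeneous solution $\rho$ and to justify convergence. The only (cosmetic) difference is in the derivative bounds, where you commute the generators $\rho\pa_\rho,\Omega_j$ through $\rho\pa_\rho-1$ and re-apply the base case, whereas the paper iterates the identity $\pa_s(e^{\alpha s}u)=(\alpha-1)e^{\alpha s}u-g$; both are valid.
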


\begin{proof}
Passing to the variable $s=-\log\rho$ as in the proof of Proposition \ref{prop:Sobolev embedding}, we shall work in the space $(0,\infty)_s\times\S_\omega^2$ replacing the collar region $\{0<\rho<1\}$. Using $\rho\pa_\rho = -\pa_s$ and suppressing the spherical variable $\omega\in\S^2$ for simplicity, we get
\begin{equation}
\label{Eq.1 prop:bODE0}
    (\pa_s + 1) u = -f,\quad 0<s<\infty
\end{equation}
We note that $f\in \A^\alpha(X)$ implies $g:=e^{\alpha s}f\in\CI_{\rm b}((0,\infty)_s\times\S^2)$, the space of smooth functions with all derivatives bounded; similarly, $e^{\beta s}u\in \CI_{\rm b}((0,\infty)_s\times\S^2)$ thus $u=\mathcal{O}(e^{-\beta s})$. We can therefore solve \eqref{Eq.1 prop:bODE0} and obtain
\[
    u(s) = e^{-s} \int_s^\infty e^t f(t)\,dt,\quad 0<s<\infty.
\]
To prove $u\in\A^\alpha(X)$, it suffices to show $e^{\alpha s}u \in \CI_{\rm b}((0,\infty)_s\times\S^2)$. We first estimate
\[
\begin{split}
    |e^{\alpha s}u(s)| = e^{(\alpha-1)s} \left| \int_s^\infty e^t f(t)\,dt \right| &= e^{(\alpha-1)s} \left| \int_s^\infty e^{-(\alpha-1)t} g(t)\,dt \right| \\
    &\leq \|g\|_{C^0} \int_s^\infty e^{(\alpha-1)(s-t)} dt = \frac{1}{\alpha-1}\|g\|_{C^0}.
\end{split}
\]
To bound the derivatives, we observe by \eqref{Eq.1 prop:bODE0} that
\[
    \pa_s(e^{\alpha s}u) = e^{\alpha s} (\pa_s +\alpha) u = e^{\alpha s} ((\alpha-1)u - f) = (\alpha-1)e^{\alpha s}u - g,
\]
we can then iterate and conclude that $e^{\alpha s}u \in \CI_{\rm b}((0,\infty)_s\times\S^2)$, thus $u\in\A^\alpha(X)$.
\end{proof}

The next proposition shows that in the case $\beta<1<\alpha$, we will obtain a $\rho^1$ leading order term when we intend to get better decay of $u$ than $\A^\beta$, using the equation $(\rho\pa_\rho - 1)u\in\A^\alpha(X)$. This is due to $\rho^1$ being an indicial solution of the regular singular operator $\rho\pa_\rho - 1$.  

\begin{proposition}
\label{prop:bODEconormal}
Let $0<\beta<1<\alpha$ and $u\in\A^\beta(X)$. Suppose $(\rho\pa_\rho - 1) u\in \A^{\alpha}(X)$. Then in the collar neighborhood $[0,1)_\rho\times\S^2$ of $\partial X$, there exists $g\in\CI(\S^2)$ such that
\[
    u = \rho g(\omega) + \widetilde{u},\quad \omega\in\S^2,\quad\text{with}\ \widetilde{u}\in \A^{\alpha}(X).
\]
\end{proposition}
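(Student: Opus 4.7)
Following the approach of Proposition \ref{prop:bODE0}, I pass to the variable $s = -\log\rho$, under which the collar $[0,1)_\rho\times\S^2$ corresponds to $[0,\infty)_s\times\S^2$ and $\rho\pa_\rho = -\pa_s$. The equation $(\rho\pa_\rho - 1)u = f$ becomes $(\pa_s + 1)u = -f$, equivalently $\pa_s(e^s u) = -e^s f$. The crucial new feature compared to Proposition \ref{prop:bODE0} is that the indicial function $e^{-s}\sim\rho$ lies in the kernel of $\pa_s + 1$: while the inhomogeneity $e^s f = \mathcal{O}(e^{(1-\alpha)s})$ is integrable near $s = +\infty$ (since $\alpha > 1$), the a priori bound $u = \mathcal{O}(e^{-\beta s})$ only yields $e^s u = \mathcal{O}(e^{(1-\beta)s})$, which may grow; one therefore cannot directly integrate to read off the improved decay without first extracting a resonant mode.

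Integrating $\pa_s(e^s u) = -e^s f$ from a reference slice $s_0 \geq 0$ to $T$ and letting $T \to \infty$, the convergence of the inhomogeneous integral forces $e^T u(T, \omega)$ to approach a finite limit
\[
  g(\omega) := e^{s_0} u(s_0, \omega) - \int_{s_0}^\infty e^t f(t, \omega)\,dt,
\]
independent of $s_0$. Applying dominated convergence to spherical derivatives (using smoothness of $u$ on the slice $\{s = s_0\}$ in the interior $X^\circ$ together with $\pa_\omega^\beta f \in \A^\alpha(X)$) yields $g \in \CI(\S^2)$. Rearranging,
\[
  \widetilde u(s, \omega) := u(s, \omega) - e^{-s} g(\omega) = e^{-s} \int_s^\infty e^t f(t, \omega)\,dt.
\]

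To verify $\widetilde u \in \A^\alpha(X)$ on the collar, I substitute $\sigma = t - s$ to rewrite
\[
  e^{\alpha s} \widetilde u(s, \omega) = \int_0^\infty e^{(1-\alpha)\sigma}\, (e^{\alpha t} f)(s+\sigma, \omega)\,d\sigma.
\]
Since $f \in \A^\alpha(X)$ means that $e^{\alpha t} f$ and all of its $\pa_s^j \pa_\omega^\beta$ derivatives are bounded on $(0,\infty)_s\times\S^2$, differentiation under the integral combined with the integrability of $e^{(1-\alpha)\sigma}$ gives $e^{\alpha s}\widetilde u \in \CI_{\rm b}((0,\infty)_s \times \S^2)$, i.e., $\widetilde u \in \A^\alpha(X)$ on the collar. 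The main subtlety, rather than a serious obstacle, is recognizing that the indicial root $1$ of $\rho\pa_\rho - 1$ forces a genuine resonance term $\rho g(\omega)$ which must be separated out before the integration argument can produce the improved decay; once this term is removed, the remainder satisfies a ``vanishing at infinity'' condition in the rescaled variable $e^s \widetilde u$, which is precisely what allows the direct computation in the style of Proposition \ref{prop:bODE0} to succeed.
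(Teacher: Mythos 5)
Your proof is correct, but it takes a genuinely different route from the paper. The paper localizes near $\partial X$ with a cutoff, applies the Mellin transform in $\rho$, and shifts the integration contour through the simple pole of $(i\xi-1)^{-1}$ at $\xi=-i$; the residue produces the term $\rho\,g(\omega)$ and the shifted contour gives a remainder in $\A^{\alpha-}(X)$, which is then upgraded to $\A^{\alpha}(X)$ by a final appeal to Proposition \ref{prop:bODE0}. You instead integrate the ODE $\pa_s(e^s u)=-e^s f$ directly in $s=-\log\rho$: since $\alpha>1$ makes $e^t f$ integrable at $s=+\infty$, the limit $g(\omega)=\lim_{s\to\infty}e^s u(s,\omega)$ exists (and is correctly identified as the resonant mode that the a priori bound $u=\mathcal{O}(e^{-\beta s})$ with $\beta<1$ cannot rule out), and the remainder is given by the explicit formula $\widetilde u=e^{-s}\int_s^\infty e^t f\,dt$, whose $\A^\alpha$ conormality follows from the convolution-type estimate with the integrable kernel $e^{(1-\alpha)\sigma}$. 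Your route is more elementary and self-contained: it avoids the Mellin machinery, lands in $\A^\alpha(X)$ in one step without the $\epsilon$-loss, needs no cutoff, and produces an explicit formula for $g$ in terms of $u$ on an interior slice and an integral of $f$ (your verification that this expression is independent of $s_0$ and smooth in $\omega$ is the right check to make). What the Mellin approach buys in exchange is generality: contour shifting through several poles, or poles of higher order, is the systematic way to extract full polyhomogeneous expansions with log terms, which is the pattern used repeatedly elsewhere in the paper (e.g.\ in Theorem \ref{thm:resolvent expansion Sec 6}); for this single simple indicial root the two arguments are equivalent in strength.
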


\begin{proof}
Let $\chi=\chi(\rho)\in \CI([0,\infty))$ be identically $1$ for $0\leq \rho\leq 1/2$ and identically $0$ for $\rho\geq 1$, then we have
\begin{equation}
\label{prop:bODEconormal eq.1}
    (\rho\pa_\rho - 1)(\chi u) = \rho\chi' u + \chi (\rho\pa_\rho - 1)u =: h \in \A^{\alpha}(X).
\end{equation}
We use a b-operator argument similar to \cite[Lemma 2.23]{Hin22}. To this end, we recall the Mellin transform in $\rho$, defined by
\[
    \calM(f)(\xi):= \int_0^\infty \rho^{-i\xi} f(\rho) \frac{d\rho}{\rho},
\]
where we suppressed the spherical variables. The equation \eqref{prop:bODEconormal eq.1} implies
\begin{equation}
\label{prop:bODEconormal eq.2}
    (i\xi - 1)\calM(\chi u)(\xi) = \calM(h)(\xi).
\end{equation}
Here we used $\calM(\rho\pa_\rho f)(\xi) = i\xi \calM(f)(\xi)$. We note that $h \in\A^{\alpha}(X)$ implies that $\calM(h)(\xi)$ is holomorphic in $\Im\xi > -\alpha$ and satisfies the estimates
\begin{equation}
\label{prop:bODEconormal eq.3}
    \|\calM(h)(\xi)\|_{C^k(\S^2)} \leq C_{kN\delta}\langle\xi\rangle^{-N},\quad -\alpha+\delta<\Im\xi<0,
\end{equation}
for any $k, N\in\NN$ and $\delta>0$. Since $\chi u\in\A^{\beta}(X)$, the estimate \eqref{prop:bODEconormal eq.3} holds for $\calM(\chi u)(\xi)$ but with $\alpha$ replaced by $\beta$. Using the inverse Mellin transform and \eqref{prop:bODEconormal eq.2} we get
\[
    (\chi u) (\rho) = \frac{1}{2\pi}\int_{\Im\xi = -\beta+\epsilon} \rho^{i\xi} \frac{\calM(h)(\xi)}{i\xi - 1}\,d\xi.
\]
We now shift the contour of integration through the simple pole $\xi=-i$ to $\Im\xi=-\alpha+\epsilon$, with estimate \eqref{prop:bODEconormal eq.3} justifying the infinite ends, the residue theorem gives 
\[
    \chi u = \rho \calM(h)(-i) + \frac{1}{2\pi}\int_{\Im\xi = -\alpha+\epsilon} \rho^{i\xi} \frac{\calM(h)(\xi)}{i\xi - 1}\,d\xi .
\]
We can therefore infer from the estimate \eqref{prop:bODEconormal eq.3} that
\[
    \widetilde{u} := \frac{1}{2\pi}\int_{\Im\xi = -\alpha+\epsilon} \rho^{i\xi} \frac{\calM(h)(\xi)}{i\xi - 1}\,d\xi \in \A^{\alpha-\epsilon}(X),\quad\forall\epsilon>0 ,
\]
namely $\widetilde{u}\in\A^{\alpha-}(X)$. Our desired expansion for $u$ follows by setting $g:=\calM(h)(-i) = \int_0^\infty \rho^{-2}f\,d\rho \in\CI(\S^2)$ and noting that $(\rho\pa_\rho - 1)\widetilde{u} = (\rho\pa_\rho - 1)u\in \A^\alpha(X)$ implies $\widetilde{u}\in \A^\alpha(X)$ by Proposition \ref{prop:bODE0}.
\end{proof}

\section{Mapping properties of the spectral wave operator}

In this section we review the mapping properties of the specral family associated to the wave operator $\Box_g$, following closely \cite[Section 2]{Hin22}. 

We note that in this paper we shall use the Fourier transform defined by 
\[
\calF \phi(\sigma) := \hat \phi(\sigma) := \int_{\R} e^{i\sigma\ts}\phi(\ts)d\ts,
\]
which is not the usual convention. As a consequence, the inverse Fourier transform is 
\[
(\calF^{-1} f)(\ts) := \check f(\ts) = \frac{1}{2\pi}\int_{\R} e^{-i\sigma\ts} f(\sigma)d\sigma.
\]
The spectral family of $\Box\equiv\Box_g$ is the following conjugated wave operator:
\begin{equation}
\label{eqn:Boxhatsigma}
    \hat\Box(\xs):= e^{i\xs\ts}\Box e^{-i\xs\ts} = 2i\xs\rho Q + \hat\Box(0) + \xs^2g^{00} \in \rho \rm{Diff}^2_{\rm{b}}.
\end{equation}
We see that $\wh\Box(\sigma)\colon\cA^{\alpha}(X)\to\cA^{1+\alpha}(X)$ for all $\sigma\in\C$, and that at $\xs = 0$, 
\[\wh\Box(0)\colon\cA^{\alpha}(X)\to\cA^{2+\alpha}(X).\]

In the present paper, we consider only the following class of ``spectrally admissible'' metrics defined below. 

\begin{definition}
\label{def:specadm}
We call the metric $g$ \textit{spectrally admissible} if the following hold:
\begin{enumerate}
\item (Mode stability)
The nullspace of the operator $\wh\Box(\xs)$ defined on $\calA^1(X)$ is trivial for all complex numbers $\xs$ with $\Im\xs\ge0$.
\item
There exists a real number $\xd$ such that for $s\in\R, \ell < -1/2, s + \ell>-1/2$, there exists a positive constant $C$ such that the estimate
\begin{equation}
\label{boxhat resolv semiclassical bounds}
	\|u\|_{H^{s,\ell}_{\textrm{b}, |\xs|\inv}(X)} \le C |\xs|^{-1+\xd}\|\wh\Box(\xs)u\|_{H^{s,\ell+1}_{\textrm{b}, |\xs|\inv}(X)}
\end{equation}
holds for $0\le \Im\xs<1, |\Re\xs|\ge C,$ and for all $u$ for which the norms on both sides are finite.
\end{enumerate}
\end{definition}

We now turn our attention to the behavior of the resolvent $\wh\Box(\sigma)^{-1}$, both for $\sigma$ farther away from and close to 0. Our analysis builds on results of \cite{Hin22} and \cite{VasyLAPLag,VasyLowEnergyLag}.

\subsection{Resolvent regularity at nonzero frequencies} We first consider the case where $|\sigma| > R_0 > 0$ for some $R_0$:

\begin{proposition}[Resolvent estimates for non-zero frequencies, Theorem 1.1 from \cite{VasyLAPLag}]
\label{prop:nonzero_freq}
Let $0 < R_0 < R_1$ be fixed. For $s, \ell \in \mathbb{R}$ satisfying $\ell < -\frac{1}{2}$ and $s + \ell > -\frac{1}{2}$, the Mode stability assumption implies the following estimate:
\begin{equation}
\label{eq:medium_energy}
\|u\|_{{H}_b^{s,\ell}(X)} \leq C\|\wh\Box(\sigma)u\|_{{H}_b^{s,\ell+1}(X)},
\end{equation}
where $\Im\sigma \geq 0$, $R_0 < |\sigma| < R_1$, and $C$ depends on $R_0, R_1, s, \ell$.
\end{proposition}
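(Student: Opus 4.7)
The plan is to run the standard Fredholm-plus-mode-stability scheme in Vasy's scattering/b-microlocal framework, combined with a compactness argument over the range $R_0 < |\sigma| < R_1$, $\Im\sigma \geq 0$.

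First I would set up the microlocal picture. On the annulus $R_0 \leq |\sigma| \leq R_1$, the operator $\wh\Box(\sigma) = \sigma^2 g^{00} + 2i\sigma\rho Q + \wh\Box(0) \in \rho\,\Diffb^2(X)$ becomes scattering-elliptic away from a pair of radial sets $\mathcal{R}_\pm \subset {}^{\mathrm{sc}}T_{\partial X}^*X$ at infinity, corresponding to outgoing and incoming radiation. Following the Melrose/Vasy machinery, I would combine (i) scattering-elliptic estimates away from the characteristic set, (ii) real principal type propagation along the Hamilton flow of the scattering principal symbol, and (iii) radial point estimates at $\mathcal{R}_+$ and $\mathcal{R}_-$. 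The two weight conditions are precisely the threshold conditions for the radial estimates: $\ell < -\tfrac12$ at the radial sink (allowing below-threshold regularity to be propagated into it) and $s+\ell > -\tfrac12$ at the radial source (requiring above-threshold regularity to be propagated out). Together with elliptic parametrix construction in the interior, this yields a Fredholm-type estimate
\begin{equation*}
\|u\|_{H_{\mathrm{b}}^{s,\ell}(X)} \leq C_\sigma\bigl(\|\wh\Box(\sigma)u\|_{H_{\mathrm{b}}^{s,\ell+1}(X)} + \|u\|_{H_{\mathrm{b}}^{-N,-N}(X)}\bigr),
\end{equation*}
with $C_\sigma$ depending continuously on $\sigma$ in the closed region $\{R_0 \leq |\sigma| \leq R_1,\ \Im\sigma \geq 0\}$, where the second term is relatively compact by Rellich's theorem.

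Next I would remove the error term using mode stability via a standard contradiction/compactness argument. Suppose the proposed bound fails; then there exist $\sigma_n$ in the compact set above and $u_n$ with $\|u_n\|_{H_{\mathrm{b}}^{s,\ell}} = 1$ while $\|\wh\Box(\sigma_n)u_n\|_{H_{\mathrm{b}}^{s,\ell+1}} \to 0$. Passing to a subsequence, $\sigma_n \to \sigma_\ast$ with $R_0 \leq |\sigma_\ast| \leq R_1$; by Rellich, $u_n$ converges in $H_{\mathrm{b}}^{-N,-N}$, and the Fredholm estimate then upgrades this to convergence in $H_{\mathrm{b}}^{s,\ell}$. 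The limit $u_\ast$ satisfies $\wh\Box(\sigma_\ast)u_\ast = 0$ with $\|u_\ast\|_{H_{\mathrm{b}}^{s,\ell}} = 1$. A bootstrap, using the same microlocal estimates with arbitrarily high $s$ (and the ellipticity of $\wh\Box(\sigma_\ast)$ at nonzero frequency, which forces the characteristic set to consist of the radial sets where radial estimates propagate regularity), shows $u_\ast \in H_{\mathrm{b}}^{\infty,\ell}(X)$. By Proposition~\ref{prop:Sobolev embedding} this is contained in $\cA^{\ell+3/2-}(X)$, and a further propagation of the outgoing radiation condition at $\mathcal{R}_+$ (coming from $\Im\sigma_\ast \geq 0$) promotes $u_\ast$ to lie in $\cA^1(X)$. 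Mode stability then gives $u_\ast = 0$, contradicting $\|u_\ast\|_{H_{\mathrm{b}}^{s,\ell}} = 1$.

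The hard part is clean execution of the radial point estimates together with verifying that the limit $u_\ast$ actually lies in the nullspace class $\cA^1(X)$ on which mode stability is assumed. The above-threshold/below-threshold dichotomy must be tracked carefully through the bootstrap, and one must ensure that the outgoing radiation condition carried by the limit is strong enough to give membership in $\cA^1(X)$ rather than merely in some $\cA^{1-\eps}(X)$. Once these technicalities are handled, uniformity in $\sigma$ across $R_0 < |\sigma| < R_1$ follows automatically from the continuous dependence of the microlocal constants on $\sigma$ combined with compactness of the parameter region, and the desired bound \eqref{eq:medium_energy} falls out.
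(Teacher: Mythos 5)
The paper does not actually prove this proposition: it quotes it directly from \cite[Theorem 1.1]{VasyLAPLag} (see also Section 2 of \cite{Hin22}), so there is no in-text argument to compare yours against line by line. Your sketch reconstructs precisely the machinery underlying that citation --- scattering-microlocal Fredholm estimates, with the two weight conditions $\ell<-\tfrac12$ and $s+\ell>-\tfrac12$ arising as the threshold conditions at the incoming and outgoing radial sets, followed by a compactness-and-contradiction argument that uses mode stability to remove the relatively compact error term and yields uniformity over the compact parameter region. That is the correct architecture, and the Fredholm estimate with the $\Hb^{-N,-N}$ error term, the Rellich compactness, and the bootstrap to $\Hb^{\infty,\ell}$ are all as they should be.

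The step you explicitly defer is, however, the genuinely delicate one, and as written it does not quite close. For real $\sigma_*\neq 0$ the kernel element $u_*$ produced by the limiting argument is \emph{outgoing}: it has the form $e^{i\sigma_* r}\rho\,v$ with $v$ conormal, and such a function is \emph{not} an element of $\cA^1(X)$ as defined in Definition \ref{def:conormalspace}, since $\rho\pa_\rho$ applied to $e^{i\sigma_*/\rho}$ produces a factor of $\rho^{-1}$. The radial point estimate at the outgoing radial set only gives $u_*\in\Hb^{\infty,\ell}(X)$ for admissible $\ell<-\tfrac12$, i.e.\ decay strictly short of $\rho^{1}$ and no b-conormality of positive order; no amount of further propagation will place $u_*$ in $\cA^1(X)$ literally. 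What is actually needed is the module-regularity (second-microlocal) refinement at the outgoing radial set showing that $e^{-i\sigma_* r}u_*$ is conormal, together with a formulation of mode stability on the class of \emph{outgoing} functions --- which is how the hypothesis is phrased in \cite{Hin22} and \cite{VasyLAPLag}, and which Definition \ref{def:specadm} of the present paper elides. So your plan is correct modulo restating the mode stability hypothesis in its outgoing form for real $\sigma$ and carrying out that one extra regularity step; with the hypothesis exactly as written here, the contradiction step does not apply to the $u_*$ you construct when $\sigma_*$ is real. (For $\Im\sigma_*>0$ there is no issue, since solutions decay exponentially and lie in $\cA^1(X)$ outright.)
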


Building on this result, we can establish regularity properties of the resolvent:

\begin{lemma}[Regularity of the resolvent]
\label{lem:resolvent_regularity}
Let $m \in \mathbb{N}_0$. Then:

1. The operator $\partial_\sigma^m\wh\Box(\sigma)^{-1}: H_b^{s,\ell+1}(X) \to H_b^{s-m,\ell}(X)$ is bounded for $\sigma, s, \ell$ as in Proposition \ref{prop:nonzero_freq}.

2. For $s \in \mathbb{R}$, $\ell < -\frac{1}{2}$, $s + \ell > -\frac{1}{2}$, $\Im \sigma \in [0,1)$, and $|\Re\sigma| \geq C > 0$, the operator
\[
\partial_\sigma^m\wh\Box(\sigma)^{-1}: H_{b,|\sigma|^{-1}}^{s,\ell+1}(X) \to |\sigma|^{-1+(m+1)\delta} H_{b,|\sigma|^{-1}}^{s-m,\ell}(X)
\]
is uniformly bounded.
\end{lemma}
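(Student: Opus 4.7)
The plan is to prove both claims by induction on $m$, using the recursive formula obtained by differentiating the resolvent identity $\wh\Box(\sigma)\wh\Box(\sigma)^{-1}=I$, together with the medium-energy LAP estimate of Proposition~\ref{prop:nonzero_freq} for part (1) and the semiclassical estimate \eqref{boxhat resolv semiclassical bounds} for part (2). From the expression \eqref{eqn:Boxhatsigma}, one computes
\[
\wh\Box'(\sigma)=2i\rho Q+2\sigma g^{00}\in\rho\,\Diffb^1(X)+\sigma\rho^2\CI(X),\qquad \wh\Box''(\sigma)=2g^{00}\in\rho^2\CI(X),
\]
and $\wh\Box^{(k)}(\sigma)=0$ for $k\ge 3$, so differentiating the identity yields
\[
\pa_\sigma^m\wh\Box(\sigma)^{-1}=-\wh\Box(\sigma)^{-1}\Bigl[m\wh\Box'(\sigma)\pa_\sigma^{m-1}\wh\Box(\sigma)^{-1}+\tbinom{m}{2}\wh\Box''(\sigma)\pa_\sigma^{m-2}\wh\Box(\sigma)^{-1}\Bigr]
\]
(the second term present only for $m\ge 2$). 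Iterating this recursion realizes $\pa_\sigma^m\wh\Box(\sigma)^{-1}$ as a finite sum of compositions of $k_1+k_2+1$ resolvent factors interleaved with $k_1$ copies of $\wh\Box'(\sigma)$ and $k_2$ copies of $\wh\Box''(\sigma)$, subject to $k_1+2k_2=m$.

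For part (1), I would argue by induction on $m$. The operator $\wh\Box'(\sigma)$ maps $\Hb^{s,\ell}(X)\to \Hb^{s-1,\ell+1}(X)$ (from its $\rho\,\Diffb^1(X)$ structure, together with $\sigma g^{00}\colon \Hb^{s,\ell}\to\Hb^{s,\ell+2}\subset\Hb^{s-1,\ell+1}$), while $\wh\Box''(\sigma)$ maps $\Hb^{s,\ell}(X)$ into $\Hb^{s,\ell+2}(X)\subset\Hb^{s,\ell+1}(X)$. Composing with Proposition~\ref{prop:nonzero_freq} at each intervening resolvent therefore costs exactly one derivative per factor of $\wh\Box'$ and none per factor of $\wh\Box''$; since the total differentiation order is $m$, the net regularity loss is at most $m$, producing the claimed mapping $\Hb^{s,\ell+1}(X)\to\Hb^{s-m,\ell}(X)$.

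For part (2), the same composition is estimated in the semiclassical norms with $h=|\sigma|^{-1}$. The essential additional bookkeeping is: $\wh\Box'(\sigma)\colon \Hbh^{s,\ell}(X)\to \Hbh^{s-1,\ell+1}(X)$ has operator norm $O(|\sigma|)$, because $\rho Q$ expends one semiclassical derivative (contributing $h^{-1}=|\sigma|$) and $\sigma g^{00}$ carries an explicit prefactor $|\sigma|$; $\wh\Box''(\sigma)\colon \Hbh^{s,\ell}(X)\to \Hbh^{s,\ell+2}(X)$ is uniformly bounded; and each resolvent contributes $|\sigma|^{-1+\delta}$ by \eqref{boxhat resolv semiclassical bounds}. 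For a term of the recursion with $k_1+2k_2=m$, the total operator norm is therefore
\[
|\sigma|^{k_1}\cdot|\sigma|^{(k_1+k_2+1)(-1+\delta)}=|\sigma|^{-k_2-1+(k_1+k_2+1)\delta}.
\]
For $\delta$ sufficiently small this exponent is maximized at $k_2=0$ (so $k_1=m$), giving $|\sigma|^{-1+(m+1)\delta}$; all other configurations yield a strictly smaller power, so summing the finitely many recursion terms produces the claimed bound.

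The main obstacle I anticipate is the bookkeeping of Sobolev indices at intermediate steps: one must ensure that the LAP threshold conditions $\ell<-\tfrac{1}{2}$ and $s+\ell>-\tfrac{1}{2}$ remain valid throughout the induction. The weight $\ell$ is held fixed (the gain from $\wh\Box^{(j)}$ is absorbed by the next resolvent), so the first condition is automatic, but the regularity index drops at each $\wh\Box'$ factor, so the most restrictive condition arises at the last resolvent application and effectively requires $(s-m)+\ell>-\tfrac{1}{2}$. This is either built into the hypothesis (by reading ``$s,\ell$ as in Proposition~\ref{prop:nonzero_freq}'' as applying to the output indices $(s-m,\ell)$), or handled by invoking the more flexible propagation and weight/regularity-trading variants of the LAP estimate from \cite{VasyLAPLag,VasyLowEnergyLag}.
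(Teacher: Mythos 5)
Your proposal is correct and follows essentially the same route as the paper, whose proof consists of citing \cite[Lemma 2.10]{Hin22} and stating exactly the factorization identity $\partial_\sigma\wh\Box(\sigma)^{-1}=-\wh\Box(\sigma)^{-1}\circ\partial_\sigma\wh\Box(\sigma)\circ\wh\Box(\sigma)^{-1}$ together with $\partial_\sigma\wh\Box(\sigma)\in\rho\,\Diffb^1+\sigma\rho^2\CI$ that you iterate. Your exponent bookkeeping ($k_1+2k_2=m$, maximum at $k_2=0$ giving $|\sigma|^{-1+(m+1)\delta}$) and your caveat about the intermediate threshold conditions $s-j+\ell>-\tfrac12$ are both accurate elaborations of what the cited reference carries out.
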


\begin{proof}
The full proof can be found in \cite[Lemma 2.10]{Hin22}. The key idea is to use the factorization $\partial_\sigma\wh\Box(\sigma)^{-1} = -\wh\Box(\sigma)^{-1} \circ \partial_\sigma\wh\Box(\sigma) \circ \wh\Box(\sigma)^{-1},$
along with the fact that $\partial_\sigma\wh\Box(\sigma) \in \rho\text{Diff}_b^1 + \sigma\rho^2\mathcal{C}^\infty$.
\end{proof}

Next, we consider the low energy behavior of $\wh\Box(\sigma)^{-1}$, which is critical for our asymptotic analysis:

\begin{theorem}[Low energy resolvent estimate, Theorem 1.1 from \cite{VasyLowEnergyLag}]
\label{thm:low_energy}
Assume Mode stability for $\sigma = 0$. Let $s, \ell, \nu \in \mathbb{R}$ satisfy $\ell < -\frac{1}{2}$, $s + \ell > -\frac{1}{2}$, and $\ell - \nu \in (-\frac{3}{2}, -\frac{1}{2})$. Then for $\Im\sigma \geq 0$ and $|\sigma| \leq \sigma_0 \ll 1$, we have
\[
\|(\rho + |\sigma|)^\nu u\|_{H_b^{s,\ell}(X)} \leq C\|(\rho + |\sigma|)^{\nu-1}\wh\Box(\sigma)u\|_{H_b^{s,\ell+1}(X)}.
\]
\end{theorem}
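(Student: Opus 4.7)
My plan is to follow the Vasy scheme (\cite{VasyLowEnergyLag}) for low-energy resolvent estimates on asymptotically conic spaces, adapted, as in \cite[Section 2]{Hin22}, to the stationary and asymptotically flat setting here. The natural geometric stage is the resolved space $[X \times [0,\sigma_0]_{|\sigma|}\,;\, \partial X \times \{0\}]$ obtained by blowing up the corner where spatial infinity meets zero frequency. This resolution has three boundary hypersurfaces: the \emph{zero face} $\zface = \{|\sigma|=0,\ \rho>0\}$, the \emph{transition face} $\tface$ (the front face of the blow-up, where $\rho/|\sigma|$ is a bounded coordinate), and the \emph{scattering/b-face} $\bface = \{\rho=0,\ |\sigma|>0\}$. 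The composite weight $\rho + |\sigma|$ is precisely a defining function of $\tface$ on the resolution, so the factor $(\rho+|\sigma|)^\nu$ controls the decay/growth rate at $\tface$ while leaving the weights at $\zface$ and $\bface$ governed by $\rho^\ell$ and semiclassical volume scaling, respectively.

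First I would formulate the a priori estimate as a contradiction argument: suppose no uniform $C$ works, so one has sequences $\sigma_n \to \sigma_\infty$ with $\mathrm{Im}\,\sigma_n \ge 0$, $|\sigma_n|\le\sigma_0$, and $u_n$ normalized by $\|(\rho+|\sigma_n|)^\nu u_n\|_{H_b^{s,\ell}}=1$, yet $f_n:=\wh\Box(\sigma_n) u_n$ satisfying $\|(\rho+|\sigma_n|)^{\nu-1} f_n\|_{H_b^{s,\ell+1}} \to 0$. The weight conditions $\ell<-\tfrac12$ and $s+\ell>-\tfrac12$ are exactly the threshold conditions at the radial points of $\wh\Box(\sigma)$ for $\sigma\neq 0$ (outgoing radiation condition), while $\ell-\nu\in(-\tfrac32,-\tfrac12)$ places the rescaled weight on the zero-face side of the corresponding threshold for $\wh\Box(0)$. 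Next, I would rescale $u_n$ and pass to a normal operator limit at each of the three faces:
\begin{enumerate}
\item[(i)] If (after passing to a subsequence) the mass of $u_n$ concentrates in the interior of $\zface$, then $\sigma_n\to 0$ and $u_n$ converges weakly, locally, to a solution $u_\infty \in \rho^{\nu-\ell} H_b^{s,\ell}$ of $\wh\Box(0)u_\infty = 0$. The condition $\ell-\nu>-\tfrac32$ combined with the Sobolev embedding of Proposition \ref{prop:Sobolev embedding} places $u_\infty$ in $\mathcal{A}^{1+\varepsilon}(X)$ for some $\varepsilon>0$, hence in $\mathcal{A}^1(X)$; mode stability at $\sigma=0$ forces $u_\infty=0$.
\item[(ii)] If the mass concentrates at $\bface$, then $|\sigma_n|$ stays bounded below, and the estimate of Proposition \ref{prop:nonzero_freq} (medium-frequency Vasy estimate) together with the semiclassical estimate \eqref{boxhat resolv semiclassical bounds} produces a contradiction with $\|f_n\|\to 0$.
\item[(iii)] If the mass concentrates at the transition face $\tface$, I rescale by $v:=\rho/|\sigma|$, freezing $\sigma_n \rho_n^{-1}$. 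The rescaled operator converges to a normal operator on the model space $[0,\infty)_v \times \partial X$, whose structure is determined by $L_0+ \sigma v^{-1}\cdot(\text{first order})+\sigma^2 g^{00}$ from Lemma \ref{lem:rewritingBoxg}. The weight $\nu$ lands in the non-$L^2$ range where uniqueness for this transition-face model holds by the same mode-stability/indicial-root analysis used in Propositions \ref{prop:bODE0}--\ref{prop:bODEconormal}, again forcing the limit to vanish.
\end{enumerate}

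Finally, I would glue these three normal-operator statements using a standard partition of unity on the resolution to obtain the uniform estimate. Concretely, let $\chi_\zface,\chi_\tface,\chi_\bface$ be a smooth partition of unity with $\mathrm{supp}\,\chi_\zface\subset\{v\ge C\}$, $\mathrm{supp}\,\chi_\bface\subset\{v\le c\}$, $\mathrm{supp}\,\chi_\tface\subset\{c/2\le v\le 2C\}$, commute each cutoff with $\wh\Box(\sigma)$ modulo lower order (and favorably weighted) errors, and absorb the commutator terms into the left-hand side once $\sigma_0$ is chosen small enough. The key obstacle, and the place where the weight hypotheses are really used, is the transition-face estimate in (iii): one must verify that the model operator at $\tface$ is invertible on the weight $\rho_\tface^\nu$, which amounts to an indicial-root computation for the model $L_0$ together with the propagation of the outgoing radiation condition from $\bface$ into the corner. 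Once this transition-face step is in hand, the combination with the interior elliptic/b-propagation estimates and with items (i) and (ii) above closes the contradiction argument and yields the claimed bound.
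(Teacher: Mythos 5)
First, a point of comparison: the paper does not prove this theorem at all — it states it and attributes it to \cite[Theorem 1.1]{VasyLowEnergyLag}. So your proposal is being measured against Vasy's argument, and your outline does capture its overall architecture (resolved space, face-by-face normal operator analysis, contradiction/compactness, gluing). However, two of your three face-by-face steps contain concrete errors as written.

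In step (i), the Sobolev embedding of Proposition \ref{prop:Sobolev embedding} applied to $u_\infty \in H_{\mathrm{b}}^{\infty,\ell-\nu}(X)$ gives $u_\infty \in \mathcal{A}^{(\ell-\nu)+3/2}(X)$, and since $\ell-\nu \in (-\tfrac32,-\tfrac12)$ this exponent lies in $(0,1)$: you land in $\mathcal{A}^{0+}(X)$, not in $\mathcal{A}^{1+\varepsilon}(X)$, and you therefore cannot directly invoke mode stability, which is an assumption about the kernel on $\mathcal{A}^{1}(X)$. The missing step is the indicial-root bootstrap: for a solution of $\wh\Box(0)u_\infty=0$ with some positive decay, one uses that the decaying indicial roots of $L_0$ from \eqref{eqn:operator L0} on the $k$-th spherical harmonic sector are $k+1\geq 1$ (the argument of Propositions \ref{prop:bODE0}--\ref{prop:bODEconormal}, or \cite[Lemma 3.2]{Hin22}) to upgrade $u_\infty$ to $\mathcal{A}^{1}(X)$ before applying mode stability. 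Without this the zero-face step does not close; with it, the window $(-\tfrac32,-\tfrac12)$ is exactly the gap between the indicial roots $0$ and $1$, which is why it appears in the hypotheses.

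Step (ii) is the more serious problem: concentration at $\bface$ does \emph{not} force $|\sigma_n|$ to stay bounded below. The face $\bface$ is the lift of $\{\rho=0\}$ and meets $\tface$ at the corner where $\sigma\to 0$ with $\rho/|\sigma|\to 0$ (equivalently $\sigma r\to\infty$); a sequence can concentrate there with $\sigma_n\to 0$. Proposition \ref{prop:nonzero_freq} applies only for $R_0<|\sigma|<R_1$ and the semiclassical bound \eqref{boxhat resolv semiclassical bounds} only for $|\Re\sigma|\geq C$, so neither covers this regime. What is actually required near $\bface$ for small $|\sigma|$ is the uniform-in-$\sigma$ scattering radial point (outgoing) estimate in the rescaled variable $\sigma r$ — this is the heart of the Lagrangian approach of \cite{VasyLowEnergyLag}, it is where the thresholds $\ell<-\tfrac12$ and $s+\ell>-\tfrac12$ genuinely enter, and it is a symbolic propagation estimate rather than a normal-operator or compactness statement. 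That ingredient is absent from your sketch, and together with the transition-face model invertibility (which you correctly flag as the key remaining obstacle, and which in this paper's notation is the invertibility of $\widetilde\Box$ from \eqref{Tilde Box}) it is what makes the gluing argument close.
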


As a consequence of the embedding estimate mentioned in \eqref{EqASobolevEmb}, we have
\begin{equation}
\label{EqAZeroMapping}
  \wh\Box(0)^{-1} \colon \cA^{2+\alpha}(X) \to \cA^{\alpha-}(X),\quad \alpha\in(0,1).
\end{equation}
In order to capture the output of the resolvent $\wh\Box(\sigma)^{-1}$ precisely near $\rho=\sigma=0$, we recall the concept of the resolved space, which is important for our analysis of low energy behavior. This construction from \cite{Hin22} allows for a precise description of the resolvent near $\rho = \sigma = 0$. For the original formulation, we refer the reader to Section 2 of \cite{Hin22}.

\begin{definition}[Resolved space]
\label{def:resolved_space}
For positive frequencies, we define the \emph{resolved space} $X^+_{\text{res}}$ as the blow-up
\[
X^+_{\text{res}} := [X \times [0,1)_\sigma; \partial X \times \{0\}].
\]
Here, $\beta: X^+_{\text{res}} \to [0,1) \times X$ denotes the blow-down map. The space $X^+_{\text{res}}$ has three boundary hypersurfaces:

\begin{itemize}
\item The \textit{transition face} $\text{tf}$: This is the new front face created by the blow-up.
\item The \textit{b-face} $\text{bf}$: This is the lift of $[0,1) \times \partial X$, given by $\overline{\beta^{-1}((0,1) \times \partial X)}$.
\item The \textit{zero face} $\text{zf}$: This is the lift of $\{0\} \times X$, given by $\overline{\beta^{-1}(\{0\} \times X^\circ)}$.
\end{itemize}
\end{definition}

This resolved space helps us analyze the resolvent's behavior near $\rho = \sigma = 0$ by delineating the transition between different asymptotic regimes.

In $\rho<1$, the functions
\[
  \rho_\bface := \frac{\rho}{\rho+\sigma},\quad
  \rho_\tface := \rho+\sigma,\quad
  \rho_\zface := \frac{\sigma}{\rho+\sigma}
\]
are smooth defining functions of the respective boundary hypersurfaces. 

Near $\bface$, we use $(\hat\rho, \sigma)$ where $\hat\rho = (\sigma r)^{-1}$. Near $\zface$, we use $(\rho, \hat r)$ where $\hat r = \sigma r$. The transition face $\tface$ represents the transition between regimes $\sigma r \lesssim 1$ and $\sigma r \gtrsim 1$.

For functions on the resolved space $X^+_\text{res}$ (which includes both radial and $\sigma$ asymptotics), we use spaces of the form $\calA^{\alpha,\beta,\gamma}(X^+_\text{res})$, where $\alpha$, $\beta$, $\gamma$ describe the decay rates at the b-face, transition face, and zero face respectively. More specifically
\[
  \cA^{\alpha,\beta,\gamma}(X^+_{\rm res})=\rho_\bface^\alpha\rho_\tface^\beta\rho_\zface^\gamma\cA^{0,0,0}(X^+_{\rm res}).
\]
When $\alpha=\beta=\gamma=0$ this denotes the set of locally bounded functions that remain locally bounded after applying any finite number of vector fields tangent to all boundary hypersurfaces of $X^+_{\rm res}$.

\begin{definition}[Partial expansions]
\label{defn:partial expansion}
An \textit{index set} is a subset $\cE\subset\C\times\N_0$ such that the number of $(z,k)\in\cE$ with $\Re z<C$ is finite for any fixed $C\in\R$, and so that $(z,k)\in\cE$ implies $(z+1,k)\in\cE$ and, if $k\geq 1$, $(z,k-1)\in\cE$. For example, a typical index set is
\[
  (z_0,k_0) := \{ (z,k)\in\C\times\N_0 \colon k\leq k_0,\ z-z_0\in\N_0 \}.
\]
Let $\cE$ be an index set and $\alpha\in\RR$. We define the space $\calA^{(\cE,\alpha)}(X)$ as the set of all functions $u$ smooth in the interior of $X$ which, near the boundary $\partial X \cong \S^2$, admit a partial expansion of the form:
\[
u -\sum_{\substack{(z,k)\in\cE\\\mathrm{Re}\,z < \alpha}} u_{z,k}(\omega) \rho^z (\log\rho)^k \in \A^\alpha (X)
\]
with $u_{z,k}(\omega)\in \CI(\S^2)$. For instance, we have
$\A^{((1,0),2)}(X) = + \rho\CI(\S^2) + \A^{2}(X)$ and $\A^{((0,1),1-)}(X) = \CI(\S^2) + (\log\rho)\CI(\S^2) + \A^{1-}(X)$. 
\end{definition}

Partially polyhomogeneous spaces like $\cA^{\alpha_\bface,\alpha_\tface,(\cE_\zface,\alpha_\zface)}(X^+_{\rm res})$ have partial expansions only at the boundary hypersurfaces at which an index set is given. 

\subsection{Conormal regularity of low-energy resolvent}
The conormal regularity of the low energy resolvent is then as follows. We prove a more general version in \cref*{lem:aux}. Both are taken from \cite[Proposition 2.14]{Hin22}.

\begin{proposition}
\label{prop:1}
  Let $\alpha\in(0,1)$ and $f\in\cA^{2+\alpha}(X)$. Then
  \begin{equation}
   \wh\Box(\sigma)^{-1}f \in \cA^{\alpha-,\alpha-,((0,0),\alpha-)}(X^+_{\rm res}).
  \end{equation}
  \end{proposition}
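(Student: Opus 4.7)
The plan is to extract the zero-energy solution as the leading contribution at $\zface$ and then estimate the remainder via the low-energy resolvent bound. Set
\[
  u_0 := \wh\Box(0)^{-1} f,\qquad u := \wh\Box(\sigma)^{-1} f,\qquad u_1 := u - u_0.
\]
By \eqref{EqAZeroMapping}, $u_0 \in \cA^{\alpha-}(X)$, and since $u_0$ is independent of $\sigma$ while $\rho = \rho_\bface \rho_\tface$ on $X^+_{\rm res}$, the lift of $u_0$ lies in $\cA^{\alpha-,\alpha-,0}(X^+_{\rm res})$. This supplies the $(0,0)$ term in the index set at $\zface$.

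Using \eqref{eqn:Boxhatsigma}, the remainder $u_1$ satisfies
\[
  \wh\Box(\sigma) u_1 = -\bigl(2i\sigma\rho Q + \sigma^2 g^{00}\bigr) u_0 =: F(\sigma).
\]
Since $Q \in \Diffb^1(X)$ preserves conormality, $\rho Q u_0 \in \cA^{1+\alpha-}(X)$; and since $g^{00} \in \rho^2 \CI(X)$, also $g^{00} u_0 \in \cA^{2+\alpha-}(X)$. Using $\rho = \rho_\bface\rho_\tface$ and $\sigma = \rho_\tface\rho_\zface$, we obtain
\[
  F(\sigma) \in \cA^{1+\alpha-,\,2+\alpha-,\,1}(X^+_{\rm res}),
\]
which enjoys one extra order of decay at $\zface$ beyond what a generic input of the same order in $\rho$ on $X$ would give.

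It remains to verify that $u_1 \in \cA^{\alpha-,\alpha-,\alpha-}(X^+_{\rm res})$. Applying Theorem~\ref{thm:low_energy} with a weight $\nu$ satisfying $\ell - \nu \in (-3/2,-1/2)$ to $\wh\Box(\sigma) u_1 = F(\sigma)$ yields a weighted $b$-Sobolev bound on $u_1$ uniform in $\sigma$. To upgrade this to conormality on the resolved space, I would commute $\wh\Box(\sigma)$ with $b$-vector fields on $X^+_{\rm res}$ tangent to every boundary hypersurface; the resulting commutators reproduce the operator up to lower-order terms of the same structural class, so iterating yields control of all $b$-derivatives, and the embedding \eqref{EqASobolevEmb} then converts the Sobolev statement into conormal decay face by face.

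The main technical obstacle is carrying the weighted resolvent estimate uniformly through the corners $\bface\cap\tface$ and $\tface\cap\zface$, where the three boundary-defining functions degenerate simultaneously: this requires choosing $\nu$ and $(s,\ell)$ compatibly at each face and exploiting that $F(\sigma)$ has the improved $\rho_\zface^1$ weight to absorb the loss near $\tface\cap\zface$. Combining both contributions gives
\[
  u = u_0 + u_1 \in \cA^{\alpha-,\alpha-,((0,0),\alpha-)}(X^+_{\rm res}),
\]
as desired.
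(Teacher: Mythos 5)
Your decomposition $u=\wh\Box(\sigma)^{-1}f = u_0+u_1$ with $u_0=\wh\Box(0)^{-1}f$ is a legitimate strategy (the paper uses exactly this resolvent identity for its finer expansions, e.g.\ \eqref{resolvent identity: cubic}), and your identification of the $(0,0)$ leading term at $\zface$ with $u_0$ is correct, as is the computation $F(\sigma)=-(2i\sigma\rho Q+\sigma^2 g^{00})u_0\in\cA^{1+\alpha-,2+\alpha-,1}(X^+_{\rm res})$. The gap is in the step you defer: ``it remains to verify that $u_1\in\cA^{\alpha-,\alpha-,\alpha-}$'' is precisely where all the work of the proposition lives, and the sketch you give does not close it. None of the available mapping lemmas apply to $F(\sigma)$: Lemma~\ref{lem:aux} and Lemma~\ref{lem:lem2.16} require spatial decay $\rho^{2+\alpha'}$ with $\alpha'\in(0,1)$, whereas $\rho Q u_0$ only lies in $\cA^{1+\alpha}(X)$ with $1+\alpha<2$. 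To proceed one must trade the explicit factor of $\sigma$ in $F$ against spatial weight inside Theorem~\ref{thm:low_energy}, via inequalities of the form $|\sigma|(\rho+|\sigma|)^{-1}\le|\sigma|^{\alpha-\delta}(\rho+|\sigma|)^{-\alpha+\delta}\le|\sigma|^{\alpha-\delta}\rho^{-\alpha+\delta}$. This purchases the $\rho_\zface^{\alpha-\delta}$ decay at the cost of a loss $\rho^{-\alpha+\delta}$ at $\bface$, so a single application only yields bounds like $\cA^{\delta-,\alpha-,\alpha-\delta}$, and one must intersect over $\delta\in(0,\alpha)$; you cannot get $\rho_\bface^{\alpha-}$ and $\rho_\zface^{\alpha-}$ simultaneously from one choice of weights, which is why the constraint $\ell-\nu\in(-3/2,-1/2)$ blocks the naive estimate at the endpoint. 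Separately, conormality in $\sigma$ (stability of $u_1$ under $\sigma\partial_\sigma$) is not obtained by a generic ``commutators reproduce the operator'' remark: the concrete device is the iterated resolvent identity $\sigma\partial_\sigma\wh\Box(\sigma)^{-1}=-\wh\Box(\sigma)^{-1}\circ(\sigma\partial_\sigma\wh\Box(\sigma))\circ\wh\Box(\sigma)^{-1}$, and each iteration regenerates a forcing term with the same deficient $\rho^{1+\alpha}$ spatial decay, so the weight-splitting above must be rerun at every order.

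For comparison, the paper's proof avoids the decomposition altogether: it first shows $u\in\cA^0([0,1)_\sigma;\cA^{\alpha-}(X))$ from Theorem~\ref{thm:low_energy} and \eqref{EqASobolevEmb}, establishes conormality on $X^+_{\rm res}$ by iterating the resolvent identity for $\sigma\partial_\sigma u$ (giving $u\in\cA^{\alpha-,\alpha-,0}$), and then applies the refinement $(\rho+|\sigma|)^{-1}\le|\sigma|^{-1+\alpha-\delta}\rho^{-\alpha+\delta}$ to conclude $\sigma\partial_\sigma u\in\cA^{\delta-,\alpha-,\alpha-\delta}$, from which the partial expansion $((0,0),\alpha-)$ at $\zface$ follows by integration in $\sigma$; the $(0,0)$ term is then $\lim_{\sigma\to 0}u=u_0$ implicitly rather than by explicit subtraction. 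Your route can be completed, but only by importing exactly this weight-interpolation and resolvent-identity machinery into the analysis of $u_1$.
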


\begin{proof}
Let $u := \wh\Box(\sigma)^{-1}f$, $f \in \mathcal{A}^{2+\alpha}(X)$. Theorem \ref{thm:low_energy} and \eqref{EqASobolevEmb} yield $u \in \mathcal{A}^0([0,1)_\sigma; \mathcal{A}^{\alpha-}(X))$. The identity $\sigma\partial_\sigma\wh\Box(\sigma)^{-1} = -\wh\Box(\sigma)^{-1} \circ (\sigma\partial_\sigma\wh\Box(\sigma)) \circ \wh\Box(\sigma)^{-1}$, combined with Theorem \ref{thm:low_energy} (using $\ell = -3/2 + \alpha - \delta$, $\nu = 0$, $0 < \delta \leq \alpha$), shows $\mathcal{A}^{2+\alpha}(X) \to \mathcal{A}^{\alpha-}(X)$. Iterating for higher derivatives gives $u \in \mathcal{A}^{\alpha-,\alpha-,0}(X^+_\text{res})$. Refining with $(\rho + |\sigma|)^{-1} \leq |\sigma|^{-1+\alpha-\delta}\rho^{-\alpha+\delta}$, $0 < \delta < \alpha$, we get $\sigma\partial_\sigma u \in \mathcal{A}^{\delta-,\alpha-,\alpha-\delta}(X^+_\text{res})$. Thus, $u \in \mathcal{A}^{\alpha-,\alpha-,((0,0),\alpha-)}(X^+_\text{res})$.

Moreover by taking $\ell< -1/2$ close to $-1/2$ while applying Theorem~\ref{thm:low_energy} in this proof we obtain 
\[\wh\Box(\sigma)^{-1}f \in \cA^{1-,\alpha-,((0,0),\alpha-)}(X^+_{\rm res}).\]
\end{proof}

\begin{lemma}\label{lem:aux}
Let $\alpha\in(0,1)$ and $f\in \cA^{\beta}([0,1);\cA^{2+\x}(X))$ with $\beta\in\R$. We have
$$\hat\Box(\xs)\inv f (\sigma)\in \cA^{\x-,\x+\xb-,\xb}(X^{+}_{{\rm res}}).$$
\end{lemma}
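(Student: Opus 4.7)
The plan is to mirror the proof of Proposition~\ref{prop:1} step by step, letting the additional $\sigma^\beta$ factor carried by $f$ propagate through to $u = \hat\Box(\sigma)^{-1}f$. The target weights $\alpha-$ at $\bface$, $\alpha+\beta-$ at $\tface$, and $\beta$ at $\zface$ correspond in the interior of $X^+_{\rm res}$ to $u \sim \rho^{\alpha-}\sigma^\beta$, so morally the output is just Proposition~\ref{prop:1}'s output multiplied by $\sigma^\beta$, with the fine index-set structure at $\zface$ being subsumed by the single weight $\beta$ (since the $\sigma^\beta$ prefactor absorbs the leading $(0,0)$ term in Proposition~\ref{prop:1}).

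First I would apply Theorem~\ref{thm:low_energy} with the same parameters $\nu = 0$ and $\ell = -\tfrac32 + \alpha - \delta$ used in Proposition~\ref{prop:1}. Using the pointwise bound $(\rho+|\sigma|)^{-1}\leq\rho^{-1}$ and the inclusion $\rho^{-1}f \in \sigma^\beta\cA^{1+\alpha}(X)$, the estimate
$$\|u\|_{H^{s,\ell}_{\rm b}(X)} \leq C\|(\rho+|\sigma|)^{-1}f\|_{H^{s,\ell+1}_{\rm b}(X)}$$
produces a right-hand side of size $C\sigma^\beta$, and the Sobolev embedding \eqref{EqASobolevEmb} yields $u \in \sigma^\beta\cA^0([0,1)_\sigma;\cA^{\alpha-}(X))$, the zeroth-order content of the claimed conormal space. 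Next I would iterate the identity
$$\sigma\partial_\sigma u = \hat\Box(\sigma)^{-1}(\sigma\partial_\sigma f) - \hat\Box(\sigma)^{-1}\bigl((\sigma\partial_\sigma\hat\Box(\sigma))u\bigr)$$
to bound $(\sigma\partial_\sigma)^k u$ for all $k$: the class $\cA^\beta([0,1);\cA^{2+\alpha}(X))$ is preserved by $\sigma\partial_\sigma$, so the first term is controlled by the previous paragraph, while $\partial_\sigma\hat\Box(\sigma) \in \rho\Diffb^1 + \sigma\rho^2\CI$ supplies the $\rho$-orders needed to put $(\sigma\partial_\sigma\hat\Box(\sigma))u$ back into a $\sigma^\beta\cA^{2+\alpha}(X)$-type input, allowing Theorem~\ref{thm:low_energy} to be reapplied. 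Spatial $b$-regularity is inherited by taking $s$ arbitrarily large and commuting elements of $\Diffb(X)$ through $\hat\Box(\sigma)$.

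The sharper $\tface$ weight $\alpha+\beta-$ is obtained via Hintz's interpolation $(\rho+|\sigma|)^{-1}\leq|\sigma|^{-1+\alpha-\delta}\rho^{-\alpha+\delta}$ applied inside the derivative identity, which upgrades the estimate to $\sigma\partial_\sigma u \in \sigma^\beta\cA^{\delta-,\alpha-,\alpha-\delta}(X^+_{\rm res})$; this is precisely Proposition~\ref{prop:1}'s refinement multiplied by $\sigma^\beta = \rho_\tface^\beta\rho_\zface^\beta$, and reading off the boundary weights recovers $\alpha+\beta-$ at $\tface$ and $\beta$ at $\zface$. The main obstacle will be the bookkeeping at corners of $X^+_{\rm res}$ and verifying that the admissibility constraints $\ell<-\tfrac12$, $s+\ell>-\tfrac12$, $\ell-\nu\in(-\tfrac32,-\tfrac12)$ of Theorem~\ref{thm:low_energy} can be met uniformly with the parameter choices needed to capture $\sigma^\beta$ decay; for values of $\beta$ outside the convenient range I would reduce to a standard case by writing $f = \sigma^N g$ with an integer $N$ such that $g \in \cA^{\beta-N}([0,1);\cA^{2+\alpha}(X))$ lies in the admissible range, then using $u = \sigma^N\hat\Box(\sigma)^{-1}g$ together with $\sigma = \rho_\tface\rho_\zface$ to transport the weight shift from $\beta-N$ to $\beta$.
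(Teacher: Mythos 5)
Your proposal follows essentially the same route as the paper: the zeroth-order bound comes from Theorem~\ref{thm:low_energy} with $\nu=0$, $\ell=-\tfrac32+\alpha-\delta$, and the higher $\sigma\pa_\sigma$-derivatives are controlled by iterating the resolvent identity; the weight $\alpha+\beta-$ at $\tface$ and $\beta$ at $\zface$ then just record the factorization $\rho^{\alpha-}\sigma^\beta=\rho_\bface^{\alpha-}\rho_\tface^{\alpha+\beta-}\rho_\zface^{\beta}$. Two points need correction. First, the commutator term is \emph{not} of ``$\sigma^\beta\cA^{2+\alpha}(X)$-type'': since $\sigma\pa_\sigma\wh\Box(\sigma)=2i\sigma\rho Q+2\sigma^2 g^{00}$, applying it to $u\in\sigma^\beta\cA^{\alpha-}(X)$ lands in $\sigma^{1+\beta}\cA^{1+\alpha-}(X)$, i.e.\ one power of $\rho$ short but with one extra power of $\sigma$. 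The iteration closes only because the weight in Theorem~\ref{thm:low_energy} lets you trade that extra $\sigma$ for the missing $\rho$ via $(\rho+|\sigma|)^{-1}\le|\sigma|^{-1}$ (this is exactly the inequality the paper invokes); as literally written, ``reapplying'' the base-case estimate to a $\cA^{2+\alpha}$-type input does not apply to this term.

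Second, the interpolation $(\rho+|\sigma|)^{-1}\le|\sigma|^{-1+\alpha-\delta}\rho^{-\alpha+\delta}$ is neither needed for, nor capable of delivering, the stated conclusion. It is not needed because, as noted above, the $\tface$-weight $\alpha+\beta-$ already follows from membership in $\sigma^\beta\cA^{\alpha-,\alpha-,0}(X^+_{\rm res})$. And the refined claim $\sigma\pa_\sigma u\in\sigma^\beta\cA^{\delta-,\alpha-,\alpha-\delta}(X^+_{\rm res})$ is false for $\sigma$-dependent $f$: the term $\wh\Box(\sigma)^{-1}(\sigma\pa_\sigma f)$ in the derivative identity carries no gain beyond $\sigma^\beta$, since $\sigma\pa_\sigma f$ is again only in $\cA^\beta([0,1);\cA^{2+\alpha}(X))$. (This is precisely why the present lemma, unlike Proposition~\ref{prop:1} where $f$ is independent of $\sigma$, asserts only the single weight $\beta$ at $\zface$ rather than an index set.) Finally, the reduction $f=\sigma^N g$ for ``inconvenient'' $\beta$ is harmless but unnecessary: the admissibility constraints of Theorem~\ref{thm:low_energy} involve only $s,\ell,\nu$, and $\beta$ enters purely as a prefactor on the norms.
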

\begin{proof}
Recall the resolvent identity:
$$\sigma\partial_\sigma \wh\Box(\sigma)^{-1} = - \wh\Box(\sigma)^{-1} \circ (\sigma\partial_\sigma \wh\Box(\sigma)) \circ \wh\Box(\sigma)^{-1}.$$
Given $\wh\Box(\sigma)^{-1}f \in \mathcal{A}^{\beta}([0,1); \mathcal{A}^{\alpha-}(X))$, this identity implies $\sigma\partial_\sigma \wh\Box(\sigma)^{-1}f$ has the same regularity. Applying Theorem~\ref{thm:low_energy} with $\ell = -3/2 + \alpha - \delta$, $\nu = 0$, $0 < \delta \leq \alpha$, with the estimate 
\[ \|u\|_{\Hb^{s,-\f32 + \x - \xd}} \leq C \|(\rho+|\sigma|)^{-1}\wh\Box(\sigma)u\|_{\Hb^{s,-\f12 + \x - \xd}} \]
and using $(\rho + |\sigma|)^{-1} \leq |\sigma|^{-1}$ yields the desired result.
\end{proof}

The following result appeared as Lemma 2.16 in \cite{Hin22}.
\begin{lemma}
\label{lem:lem2.16}
For $f \in \mathcal{A}^{2+\beta, 2+\beta, \beta}(X_{\text{res}}^+)$, which are functions on the resolved space with $\beta \in (0,1)$, applying the resolvent maps these functions into the space $\mathcal{A}^{\beta-, \beta-, \beta-}(X_{\text{res}}^+)$.
\end{lemma}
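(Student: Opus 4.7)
Mimicking Proposition~\ref{prop:1} and Lemma~\ref{lem:aux}, I would combine the low-energy estimate of Theorem~\ref{thm:low_energy} with the resolvent identity $\sigma\partial_\sigma\wh\Box(\sigma)^{-1} = -\wh\Box(\sigma)^{-1}\circ(\sigma\partial_\sigma\wh\Box(\sigma))\circ\wh\Box(\sigma)^{-1}$ (and the analogous identities for spatial b-vector fields) to propagate conormal regularity at each face of the resolved space $X^+_{\rm res}$, closing the argument with the Sobolev embedding~\eqref{EqASobolevEmb}.

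\emph{Base estimate, bf- and tf-decay.} Write $u=\wh\Box(\sigma)^{-1}f$ and decompose $f=\rho_\bface^{2+\beta}\rho_\tface^{2+\beta}\rho_\zface^{\beta}F$ with $F$ bounded conormal on $X^+_{\rm res}$. I would apply Theorem~\ref{thm:low_energy} with $\nu=0$ and $\ell=-3/2+\beta-\delta$ for small $\delta\in(0,\beta)$; for $s$ large the admissibility conditions $\ell<-1/2$, $s+\ell>-1/2$, and $\ell-\nu\in(-3/2,-1/2)$ are then satisfied. Using $\rho+|\sigma|=\rho_\tface$ and $\rho_\zface\leq 1$, the quantity $(\rho+|\sigma|)^{-1}f$ carries weight $\rho_\bface^{2+\beta}\rho_\tface^{1+\beta}\rho_\zface^{\beta}$, which has finite $\Hb^{s,-1/2+\beta-\delta}(X)$-norm uniformly in $\sigma\in[0,1)$. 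The estimate produces $u\in\Hb^{s,-3/2+\beta-\delta}(X)$ uniformly in $\sigma$; iterating spatial b-vector fields to reach $s=\infty$ and applying~\eqref{EqASobolevEmb} yields bf- and tf-decay $\beta-$.

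\emph{zf-decay from vanishing of $f$ at zf.} The $\rho_\zface^{\beta}$ factor forces $f|_\zface=0$. Combined with the mode stability $\ker\wh\Box(0)=\{0\}$ on $\mathcal A^1(X)$, this rules out the constant $(0,0)$ mode that appears in the Proposition~\ref{prop:1} zf-expansion: the candidate limit $\wh\Box(0)^{-1}(f|_{\sigma=0})$ equals zero. Iterating $\sigma\partial_\sigma$ through the resolvent identity, which preserves the source class $\mathcal A^{2+\beta,2+\beta,\beta}(X^+_{\rm res})$ up to lower-order reshuffling (since $\sigma\partial_\sigma\wh\Box(\sigma)\in\rho\,\Diffb^1+\sigma\rho^2\mathcal C^\infty$), upgrades the zero limit into decay $u\in\mathcal A^{\ast,\ast,\beta-}(X^+_{\rm res})$ at zf; combined with the previous step we obtain $u\in\mathcal A^{\beta-,\beta-,\beta-}(X^+_{\rm res})$.

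\emph{Main obstacle.} The delicate point is achieving decay $\beta-$ at zf together with $\beta-$ at tf: the input tf-decay is only $2+\beta$ (compared to $2+\alpha+\beta$ in Lemma~\ref{lem:aux}), so the standard trade $(\rho+|\sigma|)^{-1}\leq|\sigma|^{-1}$ between tf and zf weights is tight and forces the infinitesimal loss ``$-$'' at each face. Carefully balancing these weights through $\rho+|\sigma|=\rho_\tface$ and $\rho_\bface+\rho_\zface=1$, and tracking how each $\sigma\partial_\sigma$-iteration uses up one factor of $\rho_\zface$-decay against the tf-weight budget, is the technical heart of the argument.
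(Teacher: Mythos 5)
Your first step (the uniform-in-$\sigma$ application of Theorem~\ref{thm:low_energy} with $\nu=0$, $\ell=-3/2+\beta-\delta$, followed by \eqref{EqASobolevEmb}) is correct and reproduces half of the paper's argument: it yields $u=\wh\Box(\sigma)^{-1}f\in\cA^0([0,1)_\sigma;\cA^{\beta-}(X))\subset\cA^{\beta-,\beta-,0}(X^+_{\rm res})$. The genuine gap is in your zf step. Knowing that $f|_{\rm zf}=0$, that $\wh\Box(0)^{-1}0=0$, and that $u$ is conormal (stable under $\sigma\pa_\sigma$) does \emph{not} produce a decay rate $\rho_\zface^{\beta-}$: a function such as $1/\log(1/\sigma)$ tends to $0$ at $\sigma=0$ and is bounded together with all its $\sigma\pa_\sigma$-derivatives, yet has no polynomial rate of vanishing. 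Iterating the resolvent identity only propagates conormality; it cannot convert a qualitative vanishing of the limit into quantitative $\sigma^{\beta-}$ decay. Mode stability plays no role beyond the (already assumed) existence of the resolvent.

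The missing ingredient, which you allude to in your last paragraph but do not execute, is to use the $\sigma$-weight of $f$ \emph{quantitatively before} applying the resolvent. Since $\rho_\zface^{\beta}=\sigma^{\beta}\rho_\tface^{-\beta}$ and $\rho^{\beta-\delta}\sigma^{\delta}\leq(\rho+\sigma)^{\beta}$, one has $f\in\cA^{\beta-\delta}([0,1)_\sigma;\cA^{2+\delta}(X))$ for every $0<\delta<\beta$; factoring out $\sigma^{\beta-\delta}$ and applying Theorem~\ref{thm:low_energy} (as in Lemma~\ref{lem:aux}) to the remaining uniformly $\cA^{2+\delta}(X)$-valued family gives $u\in\cA^{\beta-\delta}([0,1)_\sigma;\cA^{\delta-}(X))\subset\cA^{\delta-,\beta-,\beta-\delta}(X^+_{\rm res})$. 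Intersecting this over $\delta$ with your step-one membership $\cA^{\beta-,\beta-,0}(X^+_{\rm res})$, and using that $\rho_\bface+\rho_\zface=1$ (so bf and zf are disjoint and the two memberships can be patched: near zf one uses the second family, near bf the first), yields $u\in\cA^{\beta-,\beta-,\beta-}(X^+_{\rm res})$. In particular, there is no delicate ``balancing'' to perform at the end; the resolution is this simple trade of spatial decay of the input for $\sigma$-decay, face by face.
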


\begin{proof}
The proof we give is similar to the proof of Lemma 2.16 in \cite{Hin22}. Given that $f \in \mathcal{A}^0([0, 1);\mathcal{A}^{2+\alpha}(X)) \cap \mathcal{A}^{\alpha-\delta}([0, 1);\mathcal{A}^{2+\delta}(X))$ for all $0 < \delta < \alpha$, it follows that
$$\hat\Box(\sigma)^{-1} f \in \mathcal{A}^0([0, 1);\mathcal{A}^{\alpha-}(X)) \cap \mathcal{A}^{\alpha-\delta}([0, 1);\mathcal{A}^{\delta-}(X)).$$
This space is contained within $\mathcal{A}^{\alpha-, \alpha-, 0}(X_{\text{res}}^+) \cap \mathcal{A}^{\delta-, \alpha-, \alpha-\delta}(X_{\text{res}}^+)$.
\end{proof}

\paragraph{\bf The model problem for the linear wave equation $\Box_g w = 0$}

We recall the model problem $\widetilde \Box w = \hat \rho^2$ from \cite[Lemma 2.23]{Hin22} where $$\hat \rho := \rho / \sigma.$$ In particular, the notation $\widetilde{\Box}$ is used to denote the d'Alembertian $\Box$, but in the $\hat \rho$ variable (as opposed to the d'Alembertian in the variable $\frac{1}{r}$ where $r = |x|$ is the usual variable).
\begin{equation}
\label{Tilde Box}
	\widetilde{\Box} := 2i\hat{\rho}(\hat{\rho}\pa_{\hat{\rho}}-1) + \hat{\rho}^2 (- (\hat{\rho}\pa_{\hat{\rho}})^2 + \hat{\rho}\pa_{\hat{\rho}} + \Delta_\omega)
\end{equation}
This is regarded as an operator on the intermediate face tf. We refer to \cite[Definition 2.20]{Hin22}, where the author  defines the operator $\widetilde{\Box}(1)$, which we denote in this article by $\widetilde{\Box}$.

\begin{lemma}[Lemma 2.23 from \cite{Hin22}]
\label{lem:2.23Hintz}
Let $\ti f \in \CI(\mathbb{S}^2)$. We call the unique solution $v$ of $\widetilde\Box v = \hat \rho^2 \tilde f$ the model solution. The model solution lies in the space $\A^{1-,((0,1),1-)}(\rm{tf})$. The leading term of this model solution at the zeroface {\emph{zf}} is $\big((4\pi)^{-1}\int_{\S^2} \tilde{f}\,d\omega\big)\log \hat \rho$.
\end{lemma}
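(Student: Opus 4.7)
My plan is to reduce the equation on $\tface$ to a standard Helmholtz equation on $\R^3$ by an oscillatory change of unknown, then solve it with an outgoing Green's function and extract the asymptotics from classical indicial analysis. I first set $r:=\hat\rho^{-1}$, so that $r=\infty$ is $\bface$ and $r=0$ is $\zface$, and substitute $v(\hat\rho,\omega)=e^{-ir}u(r,\omega)$. A direct computation using $\hat\rho\partial_{\hat\rho}=-r\partial_r$ should give
\[
\wt\Box(e^{-ir}u) = -e^{-ir}(\Delta_{\R^3}+1)u, \qquad \Delta_{\R^3}:=\partial_r^2+\tfrac{2}{r}\partial_r-\tfrac{1}{r^2}\Delta_\omega,
\]
so that $\wt\Box v=\hat\rho^2\tilde f$ is equivalent to $(\Delta_{\R^3}+1)u=-e^{ir}\tilde f(\omega)/r^2$ on $\R^3\setminus\{0\}$. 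The phase $e^{-ir}$ is chosen precisely so that outgoing Sommerfeld data for $u$ at $r=\infty$ correspond to $v\in\A^{1-}(\bface)$, supplying the first component of the target index set.

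Next I would decompose $\tilde f=\sum_{\ell,m}\tilde f_{\ell m}Y_{\ell m}$ and solve mode by mode using the explicit one-dimensional Green's function built from the Riccati--Bessel pair $(rj_\ell,rh_\ell^{(1)})$ (regular at $0$ and outgoing at $\infty$, respectively). With $w_{\ell m}:=ru_{\ell m}$ the radial equation is $w_{\ell m}''-\ell(\ell+1)w_{\ell m}/r^2+w_{\ell m}=-e^{ir}\tilde f_{\ell m}/r$, and rapid decay of $\tilde f_{\ell m}$ coming from the smoothness of $\tilde f$ on $\S^2$, combined with standard uniform-in-$\ell$ bounds on $j_\ell,h_\ell^{(1)}$, will give convergence of the mode sum into a function jointly conormal on $\tface$ with values in $\CI(\S^2)$. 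At $r=\infty$ the outgoing choice gives $u_{\ell m}\sim C_{\ell m}e^{ir}/r$, hence $v=e^{-ir}u=\mathcal O(\hat\rho)$ with a full conormal expansion at $\bface$.

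The log at $\zface$ will emerge only from the $\ell=0$ mode, via an indicial calculation at $r=0$. For $\ell\ge1$, the constant ansatz $u_{\ell m}\equiv \tilde f_{\ell m}/[\ell(\ell+1)]$ exactly matches the leading $r^{-2}$ source and extends to a full conormal expansion at $r=0$; these modes contribute only to the $\hat r^0$ (non-log) entry of the target index set. For $\ell=0$ this ansatz fails because $\Delta_{\R^3}(\mathrm{const})=0$, but a direct calculation gives $\Delta_{\R^3}(\log r)=1/r^2$, so the $(0,0)$ mode satisfies $u_{0,0}(r)Y_{0,0}(\omega)=-\tilde f_{0,0}Y_{0,0}\log r+\widetilde u(r)Y_{0,0}(\omega)$ with a remainder $\widetilde u\in\A^0$ near $r=0$. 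Substituting back via $v=e^{-ir}u$, using $\log r=-\log\hat\rho$ and $\tilde f_{0,0}Y_{0,0}=(4\pi)^{-1}\int_{\S^2}\tilde f\,d\omega$, the solution acquires the claimed leading term $((4\pi)^{-1}\int_{\S^2}\tilde f\,d\omega)\log\hat\rho$ at $\zface$, with subleading $\hat r^0$ contributions from $\widetilde u$ and from the constant leading terms of the $\ell\ge1$ modes, and a conormal remainder $\mathcal O(\hat r^{1-})$; this is precisely the index set $((0,1),1-)$.

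Uniqueness will follow from the two-dimensional space of homogeneous solutions of $\wt\Box$ in each $\ell$-mode. In the $\ell=0$ mode, reduction of order from the obvious solution $v=\hat\rho$ yields the partner $\hat\rho e^{-2i/\hat\rho}$ (and analogous Hankel-based pairs exist for $\ell\ge1$): the oscillatory partner is not polyhomogeneous at $\bface$, while the polynomial partner grows like $\hat\rho=\hat r^{-1}$ at $\zface$, violating the $((0,1),1-)$ index set. Hence no nonzero homogeneous solution lies in $\A^{1-,((0,1),1-)}(\tface)$. The principal technical hurdle will be the conormal bookkeeping: converting the mode-wise indicial statements into a joint conormality statement on $\tface$ with values in $\CI(\S^2)$ requires uniform-in-$\ell$ estimates on the spherical Bessel Green's functions and sharp indicial remainders at both boundaries, and in particular the $\A^{1-}$ error at $\zface$ demands careful analysis of the subleading structure of $\widetilde u$.
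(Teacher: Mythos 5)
The paper itself does not prove this lemma --- it is quoted verbatim from \cite{Hin22} --- so there is no in-paper argument to compare against; judged on its own terms, your plan is essentially the standard (and essentially Hintz's) route: recognize $\wt\Box$ as the conjugate $-e^{-i r}(\Delta_{\R^3}+1)e^{i r}\cdot$ of the shifted Helmholtz operator in the variable $r=\hat\rho^{-1}$, take the outgoing solution, and read off the behavior at $\zface$ from the indicial roots. Your conjugation identity, the indicial computations $\Delta_{\R^3}(\log r)=r^{-2}$ and $\Delta_{\R^3}(c\,Y_{\ell m})=-\ell(\ell+1)c\,Y_{\ell m}/r^2$, the resulting coefficient $(4\pi)^{-1}\int_{\S^2}\ti f\,d\omega$ of $\log\hat\rho$, and the uniqueness argument (one homogeneous partner carries an $e^{-2i/\hat\rho}$ oscillation killing conormality at $\bface$, the other grows like $\hat r^{-\ell-1}$ at $\zface$) all check out. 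The two genuine generalizations here relative to Hintz's radial input --- general $\ti f\in\CI(\S^2)$ --- are exactly where your deferred uniform-in-$\ell$ estimates are needed, so that step is not optional.

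One intermediate claim is wrong as stated, though it does not break the conclusion: at $r\to\infty$ the outgoing asymptotic is \emph{not} $u_{\ell m}\sim C_{\ell m}e^{ir}/r$. The source $-e^{ir}\ti f_{\ell m}/r^2$ oscillates with the \emph{same} phase as the outgoing solution, and this resonance produces a logarithm: writing $w=e^{ir}\psi$, the equation $\psi''+2i\psi'-\ell(\ell+1)r^{-2}\psi=-\ti f_{\ell m}/r$ forces $\psi\sim\tfrac{i}{2}\ti f_{\ell m}\log r$, hence $v=e^{-ir}u\sim C\,\hat\rho\log\hat\rho$ at $\bface$ rather than $\mathcal{O}(\hat\rho)$ with a log-free expansion. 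This is precisely why the first index in $\A^{1-,((0,1),1-)}(\tface)$ is $1-$ and not $(1,0)$; your membership claim survives, but the phrase ``full conormal expansion at $\bface$'' must be amended to include the $\hat\rho\log\hat\rho$ term. Relatedly, be aware that the tempting shortcut of applying the free outgoing resolvent kernel $e^{i|x-y|}/(4\pi|x-y|)$ directly to the source fails by a logarithm (the convolution diverges logarithmically at infinity against an $r^{-2}$ source), which is another manifestation of the same resonance; your mode-by-mode Green's function construction with conditionally convergent oscillatory tails avoids this, and is the right way to set it up.
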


\begin{proposition}
\label{Hintz Lemma 2.24}
Let $\ti f \in \CI(\S^2)$ and let $\ti u_{\rm mod} := \widetilde{\Box}\inv (\hat \rho^2 \ti f)$ be the model solution, as in Lemma \ref{lem:2.23Hintz}. Then
$$\wh\Box(\xs)\inv (\rho^2 \ti f) \in \cA^{1-, ((0,0), 1-), ( (0,1), 1-)}(X_{\rm res}^+).$$
The leading order term at \text{tf} is equal to $\ti u_{\rm mod}$, and the leading order term at \text{zf} is equal to $- \big((4\pi)^{-1}\int_{\S^2} \tilde{f}\,d\omega\big)\log(\sigma/\rho)$. In fact, we have a more precise expansion:
\begin{equation}
\label{eq:better}
\wh\Box(\xs)\inv (\rho^2 \ti f) \in \tilde u_{\rm mod}(\sigma/\rho) + \tilde{u}_0(x) + \calA^{1-,1-,1-}(X_{\rm res}^+),
\end{equation}
with $\tilde{u}_0 \in \A^1(X;\R)$ being real-valued. In fact, we have $\tilde{u}_0\in\A^{((1,0),2-)}(X;\RR)$.
\end{proposition}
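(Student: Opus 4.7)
We construct an explicit two-term approximation of $u := \wh\Box(\sigma)^{-1}(\rho^2\tilde f)$ matching its leading behaviors at $\mathrm{tf}$ and $\mathrm{zf}$, and then show the remainder lies in $\mathcal{A}^{1-,1-,1-}(X_{\rm res}^+)$ via the conormal resolvent estimates already established in this section.

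\emph{Identification of the two leading terms.} The $\mathrm{tf}$ leading term is $\tilde u_{\rm mod}$ from Lemma~\ref{lem:2.23Hintz}. This is forced by the fact that, substituting $\rho = \sigma\hat\rho$ in \eqref{eqn:Boxhatsigma} and using Lemma~\ref{lem:rewritingBoxg}, one has $\sigma^{-2}\wh\Box(\sigma) \to \widetilde\Box$ on $\mathrm{tf}$ (see \eqref{Tilde Box}) together with $\sigma^{-2}(\rho^2\tilde f) = \hat\rho^2\tilde f$; hence any solution must satisfy $\widetilde\Box u|_{\mathrm{tf}} = \hat\rho^2\tilde f$. For the $\mathrm{zf}$ leading term $\tilde u_0$, we solve the real-coefficient, zero-frequency equation $\wh\Box(0)\tilde u_0 = \rho^2\tilde f$. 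Using the decomposition $\rho^{-2}\wh\Box(0) = L_0 + \rho L_1 + \rho^2 L_2$ of Lemma~\ref{lem:rewritingBoxg}, the indicial operator $L_0$ has indicial roots $\{-\ell,\ell+1\}$ on the spherical mode of degree $\ell$. The Mellin transform argument in Proposition~\ref{prop:bODEconormal}, applied mode-by-mode in spherical harmonics and combined with mode stability at $\sigma = 0$, produces $\tilde u_0 \in \mathcal{A}^{((1,0),2-)}(X;\mathbb{R})$: the explicit $\rho^1$ leading term corresponds to the indicial root $\alpha = 1$ on the $\ell = 0$ mode, while all higher contributions (including the $\rho^2\log\rho$ resonance on $\ell = 1$, where $\ell(\ell+1)-2=0$) are absorbed into the $\mathcal{A}^{2-}$ remainder. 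Reality of $\tilde u_0$ follows since $\wh\Box(0)$ and $\rho^2\tilde f$ are real.

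\emph{Correction argument.} Form an approximate solution $U_{\rm app}$ on $X_{\rm res}^+$ by patching a cutoff of $\tilde u_{\rm mod}$ (treated as a function of $\hat\rho = \rho/\sigma$ and hence smooth on $X_{\rm res}^+$ away from the corner) to $\tilde u_0$ supported near $\mathrm{zf}$, with a matching term near $\mathrm{tf}\cap\mathrm{zf}$ that avoids double-counting the source $\rho^2\tilde f$. Using the global expansion $\wh\Box(\sigma) = \wh\Box(0) + 2i\sigma\rho Q + \sigma^2 g^{00}$ from \eqref{eqn:Boxhatsigma} away from $\mathrm{tf}$, and $\wh\Box(\sigma) = \sigma^2\widetilde\Box + O(\sigma^3\hat\rho^3)$ near $\mathrm{tf}$, a direct computation shows that $E := \wh\Box(\sigma)U_{\rm app} - \rho^2\tilde f$ lies in a conormal space on $X_{\rm res}^+$ with strictly better decay at each face than $\rho^2\tilde f$. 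Specifically, one arranges the cutoffs so that $E$ fits the hypothesis of Lemma~\ref{lem:lem2.16} (or the mixed-face version Lemma~\ref{lem:aux}), yielding $\wh\Box(\sigma)^{-1}E \in \mathcal{A}^{1-,1-,1-}(X_{\rm res}^+)$. Setting $v := u - U_{\rm app}$, so that $\wh\Box(\sigma)v = -E$, gives $v \in \mathcal{A}^{1-,1-,1-}$, and combining with $U_{\rm app}$ yields \eqref{eq:better}.

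\emph{Main obstacle.} The delicate point is the matching near the corner $\mathrm{tf}\cap\mathrm{zf}$. There $\tilde u_{\rm mod}$ has leading term $(4\pi)^{-1}\bigl(\int_{\mathbb{S}^2}\tilde f\,d\omega\bigr)\log\hat\rho$, while $\tilde u_0$ contributes a leading $\rho g(\omega)$ at $\partial X$. Rewriting $\log\hat\rho = \log\rho - \log\sigma$ and deciding which logarithmic piece is recorded as the claimed $-(4\pi)^{-1}(\int\tilde f)\log(\sigma/\rho)$ leading term on $\mathrm{zf}$ versus which is absorbed into the $\mathcal{A}^{1-,1-,1-}$ remainder requires careful index-set bookkeeping at each face of $X_{\rm res}^+$, leveraging the precise expansion furnished by Lemma~\ref{lem:2.23Hintz}. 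Once this matching is in place, the remaining estimates reduce to repeated application of the conormal resolvent machinery recorded in Proposition~\ref{prop:1}, Lemma~\ref{lem:aux}, and Lemma~\ref{lem:lem2.16}.
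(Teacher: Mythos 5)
Your identification of $\tilde u_0$ is a genuine gap, not a presentational one. You propose to define $\tilde u_0$ by solving $\wh\Box(0)\tilde u_0 = \rho^2\tilde f$ and assert that the indicial/Mellin analysis then gives $\tilde u_0\in\A^{((1,0),2-)}(X)$ with leading term at the indicial root $1$ of the $\ell=0$ mode. But $\rho^2\tilde f$ is exactly the borderline input for which $\wh\Box(0)^{-1}$ fails to produce decay (the mapping \eqref{EqAZeroMapping} requires $\A^{2+\alpha}$ with $\alpha>0$): writing $\rho^{-2}\wh\Box(0)=L_0+\rho L_1+\rho^2L_2$, your equation forces $L_0 u=\tilde f(\omega)+\mathcal{O}(\rho)$, and since $0$ is an indicial root of $L_0$ on the $\ell=0$ mode, the resonant source $(4\pi)^{-1}\int_{\S^2}\tilde f\,d\omega$ produces a particular solution $\sim\big((4\pi)^{-1}\int_{\S^2}\tilde f\,d\omega\big)\log\rho$ (note $L_0\log\rho=1$), while each $\ell\ge1$ mode contributes a nondecaying $\rho^0$ term $\tilde f_\ell/(\ell(\ell+1))$. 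So the solution of your zero-energy problem grows logarithmically at $\partial X$ and is not in $\A^1(X)$, let alone $\A^{((1,0),2-)}(X)$; this obstruction is precisely why the true solution has the $\sigma$-dependent leading term $-\big((4\pi)^{-1}\int_{\S^2}\tilde f\,d\omega\big)\log(\sigma/\rho)$ at zf, which cannot be captured by any $\sigma$-independent function. Relatedly, $\tilde u_0$ is not the ``zf leading term'': that logarithm already sits inside $\tilde u_{\rm mod}(\sigma/\rho)$, and $\tilde u_0$ is a subleading $\mathcal{O}(\rho)$ correction. With a nonexistent zf model, your error term $E$ cannot be arranged to satisfy the hypotheses of Lemma~\ref{lem:lem2.16}, so the correction argument does not close.

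The paper's route avoids the patching altogether: it uses $\tilde u_{\rm mod}$ as a \emph{global} approximation on $X^+_{\rm res}$, computes the error $\tilde e=(\wh\Box(\sigma)-\sigma^2\widetilde\Box)\tilde u_{\rm mod}$ using $\wh\Box(\sigma)-\sigma^2\widetilde\Box=\rho^3L_1+\rho^4L_2+2i\sigma\rho^3\widetilde Q+\sigma^2 g^{00}$, and shows $\tilde e\in\tilde e_0(x)+\A^{3-,3,1-}(X^+_{\rm res})$ with $\tilde e_0=\rho^4c_1(x)$ real-valued, where $c_1$ is the coefficient of $\rho\pa_\rho$ in $L_2$ (so $\rho^4 c_1=\rho^4L_2\big(\!-\!\log(\sigma/\rho)\big)$). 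The correct $\tilde u_0$ is $-\wh\Box(0)^{-1}\tilde e_0$: here the input decays like $\rho^4$, so \eqref{EqAZeroMapping} applies and the subsequent indicial analysis (as in \eqref{eq:ti u_0}) yields $\tilde u_0\in\A^{((1,0),2-)}(X;\R)$. The remaining pieces are absorbed into $\A^{1-,1-,1-}(X^+_{\rm res})$ via the resolvent identity together with Proposition~\ref{prop:1}, Lemma~\ref{lem:aux} and Lemma~\ref{lem:lem2.16}. If you wish to retain a gluing strategy, you must replace your zf model by the pair consisting of $-\big((4\pi)^{-1}\int_{\S^2}\tilde f\,d\omega\big)\log(\sigma/\rho)$ together with a genuine $\A^{((1,0),2-)}$ corrector sourced by the $\rho^4L_2$ term of the metric, and only then estimate the mismatch at the corner $\tface\cap\zface$.
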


\begin{proof}
In view of Lemma \ref{lem:rewritingBoxg}, \eqref{eqn:Boxhatsigma} and \eqref{Tilde Box}, we have
\[
    \wh\Box(\sigma) - \sigma^2 \widetilde\Box = \rho^3 L_1 + \rho^4 L_2 + 2i\sigma\rho^3 \widetilde{Q} + \sigma^2 g^{00}.
\] where we recall from our notation \eqref{eqn:Q=Q0+Q'} that $\widetilde Q\in\Diffb^1(X).$ %
We shall now compute this operator applied to $\tilde u_{\rm mod}$. We first note that $L_1=2m(\rho\pa_\rho)^2$ eliminates the singular term $\log(\sigma/\rho)$ and the $(\sigma/\rho)^0$ term in the expansion of $\tilde u_{\rm mod}\in \cA^{1-, 0, ( (0,1), 1-)}(X_{\rm res}^+)$ near zf. It follows that $\rho^3 L_1 \tilde u_{\rm mod} \in \rho^3 \cA^{1-, 0, 1-}(X_{\rm res}^+) \subset \cA^{4-, 3, 1-}(X_{\rm res}^+)$.

\noindent
In view of our definition of $L_2\in \Diffb^2(X)$, which in turn is a consequence of the definition of the metrics we consider, \cref*{def:metric}, we see that in local coordinates $(\theta,\varphi)$ on $\S^2$,
\[
\begin{split}
	L_2 = &a_1(\rho\pa_{\rho})^2 + a_2\rho\pa_{\rho} \pa_\theta + a_3 \rho\pa_{\rho} \pa_\varphi + b_1\pa_\theta^2 + b_2\pa_\theta \pa_\varphi + b_3\pa_\varphi^2 \\
	&+ c_1 \rho\pa_{\rho} + c_2 \pa_\theta + c_3\pa_\varphi,\quad a_j, b_j, c_j \in\CI(X;\R).
\end{split}
\]
We remark that $L_2$ does not contain zeroth-order terms, thus $L_2$ eliminates the $(\sigma/\rho)^0$ term in the expansion of $\tilde u_{\rm mod}$ near zf, while $L_2 (-\log(\sigma/\rho)) = c_1\in\CI(X;\R)$ is real-valued.  Therefore,
\[
	\rho^4 L_2 \tilde u_{\rm mod} \in \rho^4 c_1(x) + \rho^4 \A^{1-,0,1-}(X^+_{\rm res}) \subset \rho^4 c_1(x) + \cA^{5-, 4, 1-}(X_{\rm res}^+).
\]
Note that $\sigma\rho^3 = \rho_\bface^3 \rho_\tface^4\rho_\zface$ and $\sigma^2 g^{00}\in \sigma^2\rho^2 \CI(X) = \rho_\bface^2 \rho_\tface^4\rho_\zface^2 \CI(X)$. Thus after viewing $\tilde u_{\rm mod}$ as an element of $\A^{1-, 0, 0-}(X^+_{\rm res})$, we have
\[
	2i\sigma\rho^3\widetilde{Q} \tilde u_{\rm mod} \in \cA^{4-,4,1-}(X_{\rm res}^+),\quad \sigma^2 g^{00}\tilde u_{\rm mod} \in \cA^{3-,4,2-}(X_{\rm res}^+).
\]
We therefore conclude from above that the following ``error term'' lies in the space
\[
    \ti e := (\wh\Box(\sigma) - \sigma^2 \widetilde\Box)\ti u_{\rm mod} \in \ti e_0(x) + \A^{3-,3,1-}(X_{\rm{res}}^+),
\]
where $\ti e_0 := \rho^4 c_1(x) \in \rho^4 \CI(X;\R)$ is real-valued. Since $\rho^2 \ti f= \sigma^2 \widetilde\Box\ti u_{\rm mod}$, we have
\[
    \wh\Box(\sigma)^{-1}(\rho^2 \ti f) = \ti u_{\rm mod} - \wh\Box(\sigma)^{-1} \ti e.
\]
We recall from \cref*{lem:lem2.16} that $\wh\Box(\sigma)^{-1}: \A^{3-,3,1-}(X_{\rm{res}}^+) \to \A^{1-,1-,1-}(X_{\rm{res}}^+)$, where the target space is the remainder term we desire in \eqref{eq:better}. Thus it remains to consider the term $\wh\Box(\sigma)^{-1} \ti e_0$. Using the resolvent identity we write
\[
	\wh\Box(\sigma)^{-1} \ti e_0 = \wh\Box(0)^{-1} \ti e_0 -\sigma\wh\Box(\sigma)^{-1} [\sigma^{-1}(\wh\Box(\sigma)-\wh\Box(0))\wh\Box(0)^{-1} \ti e_0 ].
\]
We note that $\ti u_0 := -\wh\Box(0)^{-1} \ti e_0$ is real-valued since $\ti e_0$ is real-valued. More precisely, arguing as in \cite[Lemma 3.2]{Hin22} or following the proof of Theorem \ref{thm:resolvent expansion Sec 6} (with $c(\omega)=0$ there), we obtain 
\begin{equation}
\label{eq:ti u_0}
    \ti u_0 = - \wh\Box(0)^{-1} \ti e_0 \in \ti c \rho + \A^{2-}(X) \subset \A^{((1,0),2-)}(X) .
\end{equation}
Recalling from \eqref{eqn:Boxhatsigma} and Lemma \ref{lem:rewritingBoxg} that $\sigma^{-1}(\wh\Box(\sigma)-\wh\Box(0)) = 2i\rho (Q_0+\rho^2 \widetilde{Q}) + g^{00}\sigma$ and noticing that $Q_0\rho = 0$, we get
\[
    \sigma^{-1}(\wh\Box(\sigma)-\wh\Box(0))\tilde{u}_0 = 2i\rho Q_0\tilde{u}_0 + 2i\rho^3\widetilde{Q}\tilde{u}_0 + \sigma g^{00}\tilde{u}_0 \in \A^{3-}(X) + \sigma\A^3(X) .
\]
In view of Proposition \ref{prop:1} and Lemma \ref{lem:aux} we have
\[
	\wh\Box(\sigma)^{-1}: \A^{3-}(X) \to \A^{1-,1-,((0,0),1-)}(X_{\rm{res}}^+);\quad
	\wh\Box(\sigma)^{-1}: \sigma\A^{3}(X) \to \A^{1-,2-,1}(X_{\rm{res}}^+),
\]
we therefore conclude that
\[
    \wh\Box(\sigma)^{-1} \ti e_0 \in -\ti u_0 + \A^{1-,2-,1}(X_{\rm{res}}^+),
\]
which completes the proof of \eqref{eq:better} since $\wh\Box(\sigma)^{-1}(\rho^2 \ti f) = \ti u_{\rm mod} - \wh\Box(\sigma)^{-1} \ti e$.
\end{proof}

\begin{remark}
Our expansion \eqref{eq:better} of the resolvent applied to $\rho^2\CI(\partial X)$ inputs generalizes the result for $\rho^2$ inputs in Proposition 2.24 of \cite{Hin22}. 
    Moreover, our expansion is more explicit even for radial $\rho^2$ inputs compared to the aforementioned work; we have shown that the expansion near zf corresponding to the index set $(0,1)$ comes purely from $\ti u_{\rm mod}$ and an element $e'$ in $\A^1(X;\RR)$. Indeed, we even specify the first-order expansion for $e'\in\A^{((1,0),2-)}(X)$.
\end{remark}

\section{Long-time asymptotics in compact spatial sets}

In this section, we study the long time asymptotics in compact spatial sets for the solution of the following initial value problem. Let $u_0, u_1 \in \mathcal{A}^1(X)$ satisfy
\begin{equation}
\label{initial data u0, u1}    \begin{gathered}
	u_0(x) = \rho c_1(\omega) + \rho^2 c_2(\omega) + v_0(x), \quad v_0 \in \mathcal{A}^3(X),\\
	u_1(x) = \rho d_1(\omega) + v_1(x), \quad v_1 \in \mathcal{A}^2(X),
\end{gathered}
\end{equation}
where $c_1, c_2, d_1 \in \CI(\mathbb{S}^2)$, $\rho = |x|^{-1}$, and $x = |x|\omega$ with $\omega \in \mathbb{S}^2$. We consider the initial value problem
\begin{equation}
\label{I.V. problem}
	\Box_g \phi = a(\ts,x)\phi^3,\quad \phi(0,x) = u_0(x),\quad \partial_{t_*}\phi(0,x) = u_1(x). 
\end{equation}
Here $a\in\CI(\RR_{\ts};\CI(X))$ satisfies the following estimate:
\begin{equation}
\label{a(t*,x) estimate}
    |\pa_{\ts}^k \mathcal{V}_X^I a(\ts,x)|\leq C_{k, I} \la \ts \ra^{-k},\quad\forall k,I,
\end{equation}
where $\mathcal{V}_X\in \mathrm{span}\{\pa_{r}, \pa_\rho, \Omega_1,\Omega_2,\Omega_3\}$, $k\in\NN$, and $I$ is a multi-index of any order.

\medskip
\noindent
It follows from Theorem \ref{thm:power_nonlinearity} that the solution to \eqref{I.V. problem} with compactly supported initial data satisfies 
\begin{equation}
\label{phi conormal est}
	|\pa_{\ts}^k \Gamma_b^I \phi(\ts,x)| \leq C_{k, I} \la\ts + 2|x|\ra^{-1} \la \ts \ra^{-1-k},
\end{equation}
where $\Gamma_b\in\mathrm{span}\{r\pa_{r},\Omega_1,\Omega_2,\Omega_3\}$, $I$ is a multi-index of any order.

Before stating our main theorem, we begin with two lemmas describing the asymptotics of the solution $\phi$ near null infinity $\mathscr I^{+} = \{\rho = 0\}$. The first lemma establishes the existence of the first radiation field $\phi_{\rm{rad},1}$ and second radiation field $\phi_{\rm{rad},2}$ of $\phi$.

\begin{lemma}[Existence of second-order radiation field expansion]
	\label{lem:rad field small time}
	Let $\phi$ be the solution of the initial value problem \eqref{I.V. problem}. Then there exist $\phi_{\rm{rad},1}, \phi_{\rm{rad},2} \in \CI([0,\infty)_{t_*};\CI(\S^2))$ such that
	\begin{equation}
		\label{radiation field 2nd order}
		\phi(\ts,x) \in \rho\phi_{\rm{rad},1}(\ts,\omega) + \rho^2\phi_{\rm{rad},2}(\ts,\omega) + \CI([0,\infty)_{t_*}; \A^{3}(X))
	\end{equation}
Furthermore, $\phi_{\rm{rad},2}$ is given explicitly in terms of: $\phi_{\rm{rad},1}$, the angular factors $c_2$ and $d_1$ of the initial data given in \eqref{initial data u0, u1}, and the leading angular factors $\tilde g$, $a_0$ of $g^{00}$, $a$, given in \eqref{eqn:tilde g, g00}, \eqref{eqn:a a0} respectively.
 \begin{equation}
 \label{eq:form of 2nd radi field}
    \begin{split}
		\phi_{\rm{rad},2}(\ts,\omega) &= c_2(\omega) + \frac{1}{2}\ti g(\omega) \big[ d_1(\omega) - \pa_{\ts}\phi_{\rm{rad},1}(\ts,\omega) \big] \\ 
			&\quad\, + \frac{1}{2}\int_0^{\ts} (\Delta_\omega\phi_{\rm{rad},1}(s,\omega)-a_0(s,\omega)\phi_{\rm{rad},1}^3(s,\omega)) ds.
		\end{split}
	\end{equation} 
\end{lemma}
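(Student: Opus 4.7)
The plan is to build the expansion iteratively by matching powers of $\rho$ in the b-operator decomposition $\Box_g = \wh\Box(0) - g^{00}\pa_{\ts}^2 - 2\rho\pa_{\ts}Q$ from \cref{lem:rewritingBoxg}. The starting point is the pointwise conormal bound \eqref{phi conormal est}, which via \cref{prop:Sobolev embedding} places $\phi$ together with all of its $\pa_{\ts}$- and $\Gamma_{\rm b}$-derivatives in $\CI([0,\infty)_{\ts};\A^1(X))$. This initial conormal regularity is the only input we need about the solution beyond its equation.

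To extract $\phi_{\rm{rad},1}(\ts,\omega)$, I would apply a Mellin-type indicial argument in the spirit of \cref{prop:bODEconormal} at fixed $\ts$. The leading radial part of $\wh\Box(0)$ is $\rho^2 L_0 = \rho^2[-(\rho\pa_\rho)^2 + \rho\pa_\rho + \Delta_\omega]$, whose indicial polynomial $-s^2+s=s(1-s)$ has roots $0$ and $1$ at $\rho=0$; combined with the fact that the cubic nonlinearity places $a\phi^3 \in \A^3(X)$, this forces a radiation-field term $\rho\phi_{\rm{rad},1}(\ts,\omega)$ at leading order with no $\rho^0$ contribution. Smoothness of $\phi_{\rm{rad},1}$ in $\ts$ is then inherited from $\phi \in \CI([0,\infty);\A^1(X))$ by commuting $\pa_{\ts}$ through the equation.

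To pin down $\phi_{\rm{rad},2}$, I substitute the ansatz $\phi = \rho\phi_{\rm{rad},1} + \rho^2\phi_{\rm{rad},2} + \wh\phi$ into $\Box_g\phi = a\phi^3$ and collect the $\rho^3$-coefficient. Using the identities $Q_0(\rho f) = 0$, $Q_0(\rho^2 f) = \rho^2 f$, and $L_0(\rho f(\omega)) = \rho\Delta_\omega f$ together with the leading expansions $g^{00} = \rho^2 \ti g(\omega) + O(\rho^3)$ and $a(\ts,x) = a_0(\ts,\omega) + O(\rho)$, all contributions from $\rho^2\widetilde Q$, $\rho L_1$, and $\rho^2 L_2$ fall to order $\rho^4$; the surviving $\rho^3$ coefficient on the left is $\Delta_\omega\phi_{\rm{rad},1} - \ti g\,\pa_{\ts}^2\phi_{\rm{rad},1} - 2\pa_{\ts}\phi_{\rm{rad},2}$, while on the right it is $a_0\phi_{\rm{rad},1}^3$. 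This yields the transport equation
\[
2\pa_{\ts}\phi_{\rm{rad},2}(\ts,\omega) = \Delta_\omega\phi_{\rm{rad},1}(\ts,\omega) - \ti g(\omega)\pa_{\ts}^2\phi_{\rm{rad},1}(\ts,\omega) - a_0(\ts,\omega)\phi_{\rm{rad},1}^3(\ts,\omega).
\]
Reading $\phi_{\rm{rad},1}(0,\omega) = c_1$, $\pa_{\ts}\phi_{\rm{rad},1}(0,\omega) = d_1$, and $\phi_{\rm{rad},2}(0,\omega) = c_2$ off from \eqref{initial data u0, u1} and integrating from $0$ to $\ts$, with the $\ti g\,\pa_{\ts}^2\phi_{\rm{rad},1}$ term handled by $\int_0^{\ts}\pa_s^2\phi_{\rm{rad},1}(s,\omega)\,ds = \pa_{\ts}\phi_{\rm{rad},1}(\ts,\omega) - d_1(\omega)$, produces exactly \eqref{eq:form of 2nd radi field}.

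The main obstacle is closing the conormal bootstrap with a sharp $\A^3(X)$ remainder rather than merely $\A^{3-}$. Since $\phi$ is known a priori only to lie in $\A^1$ and the cubic nonlinearity $a\phi^3$ is therefore exactly at the threshold $\A^3$, the b-ODE estimates of \cref{prop:bODE0,prop:bODEconormal} just suffice to resolve the two indicial terms $\rho^1,\rho^2$. One obtains the desired $\A^3$ conormal remainder, uniformly in $\ts$, by iterating the extraction on the commuted equations $\Box_g(\pa_{\ts}^j\Gamma_{\rm b}^I\phi) = \pa_{\ts}^j\Gamma_{\rm b}^I(a\phi^3)$, exploiting the fact that $\Box_g$ differs from $\rho^2 L_0$ only by operators lower-order at $\partial X$ and by using the uniform-in-$\ts$ decay of $\pa_{\ts}^j\Gamma_{\rm b}^I\phi$ provided by \eqref{phi conormal est}. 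This propagation of conormality through the nonlinearity simultaneously yields smoothness in $\ts$, giving membership in $\CI([0,\infty)_{\ts};\A^3(X))$.
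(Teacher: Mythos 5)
Your coefficient-matching at order $\rho^3$ and the integration of the resulting transport equation $2\pa_{\ts}\phi_{\rm rad,2}=\Delta_\omega\phi_{\rm rad,1}-\ti g\,\pa_{\ts}^2\phi_{\rm rad,1}-a_0\phi_{\rm rad,1}^3$ from the data at $\ts=0$ are correct and reproduce \eqref{eq:form of 2nd radi field}, including the way $c_2$ and $d_1$ enter. The gap is in how you justify the expansion itself — the extraction of $\phi_{\rm rad,1}$ and the sharp $\A^3(X)$ remainder. You take $\rho^2 L_0$ as the principal part of $\Box_g$ at $\partial X$ and assert that $\Box_g$ "differs from $\rho^2L_0$ only by operators lower-order at $\partial X$". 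This is false: by Lemma~\ref{lem:rewritingBoxg}, $\Box_g=\wh\Box(0)-g^{00}\pa_{\ts}^2-2\rho\pa_{\ts}Q$ with $Q=(\rho\pa_\rho-1)+\rho^2\widetilde Q$, and the transport term $-2\rho\pa_{\ts}Q$ lies in $\rho\,\pa_{\ts}\Diffb^1(X)$ — one full order \emph{larger} at $\rho=0$ than $\wh\Box(0)\in\rho^2\Diffb^2(X)$. With only the a priori bound $\phi\in\CI([0,\infty)_{\ts};\A^1(X))$ from \eqref{phi conormal est}, moving this term to the right-hand side gives
\begin{equation*}
L_0\phi=\rho^{-2}\bigl(a\phi^3+g^{00}\pa_{\ts}^2\phi+2\rho\pa_{\ts}Q\phi\bigr)-\rho L_1\phi-\rho^2L_2\phi\in\A^0(X),
\end{equation*}
because $2\rho\pa_{\ts}Q\phi$ is only in $\A^2(X)$. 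So the indicial roots of $L_0$ never come into play, and your Mellin/indicial argument at fixed $\ts$ does not produce the $\rho^1$ leading term, nor can the closing bootstrap on commuted equations absorb this term while $\rho^2L_0$ is kept as the model operator.

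The missing idea is to exploit the transport structure before doing any radial indicial analysis: rewrite the equation as $\pa_{\ts}(\rho\pa_\rho-1)\phi=(2\rho)^{-1}(\wh\Box(0)\phi-a\phi^3-g^{00}\pa_{\ts}^2\phi)-\pa_{\ts}\rho^2\widetilde Q\phi\in\CI([0,\infty)_{\ts};\A^2(X))$ and integrate in $\ts$ from the initial surface, using $u_0,u_1$ to evaluate $(\rho\pa_\rho-1)\phi|_{\ts=0}$ (this is also precisely where $c_2$ and $d_1$ enter). One then applies Proposition~\ref{prop:bODEconormal} to the first-order regular-singular operator $\rho\pa_\rho-1$ (indicial root $1$) to extract $\phi_{\rm rad,1}$, and a second pass with the refined right-hand side together with Proposition~\ref{prop:bODE0} (using $(\rho\pa_\rho-1)\rho^2=\rho^2$) yields $\phi_{\rm rad,2}$ and the remainder in $\A^3(X)$ rather than merely $\A^{3-}(X)$. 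Your formula derivation survives unchanged once this step is in place, but without it the expansion is not established.
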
 

We make some comments on the second-order radiation field expansion:

\noindent
(i) The second-order radiation field expansion \eqref{radiation field 2nd order} plays a crucial role in obtaining an $\A^{((3,0),4))}(X)$ partial expansion of the forcing term associated to the initial value problem defined in \eqref{forcing problem}, more precisely, its average in $\ts$; see the expansion \eqref{eqn:fhat} with \eqref{eqn:c(omega)} and Definition \ref{defn:partial expansion}. In contrast, the first-order radiation field expansion would only yield that the $\ts$-average of the forcing term, $\wh{f}_0$, lies in $\A^3(X)$, which would not produce an explicit leading singular term in the resolvent expansion $\wh\Box(\sigma)^{-1}\wh{f}_0$; see \eqref{decompose u sing and reg}--\eqref{u regular}.

\noindent
(ii) We use the formula \eqref{eq:form of 2nd radi field} for $\phi_{{\rm rad},2}$ to derive the more explicit formula \eqref{eq:leadcoefficient} for the leading coefficent $c_0$ from the formula \eqref{eq:c0 again}.

\begin{remark}[Polyhomogeneous expansion at $\mathscr I^+$]
On the resolved spacetime $M_+$ we work in conormal/polyhomogeneous classes rather
than assuming a formal $\rho$-series. Using the identity \eqref{eqn:dt*Q0 phi}
and the a~priori bounds, one obtains \eqref{eqn:Q0 phi A2}.
By Proposition~\ref{prop:bODEconormal} (indicial root $1$), this forces a leading term
$\rho\,\phi_{\mathrm{rad},1}(t^*,\omega)$ with conormal remainder \eqref{rad field 1st}.
Expanding the coefficients at $\rho=0$ and refining the identity gives \eqref{eqn:Q0 phi 2nd}, and
Proposition~\ref{prop:bODE0} then inverts $(\rho\partial_\rho-1)$ at weight $\rho^2$ to produce
$\rho^2\,\phi_{\mathrm{rad},2}(t^*,\omega)$ with the explicit formula \eqref{eq:form of 2nd radi field}. Thus $\phi_{\mathrm{rad},2}$ is determined by a (time) integral relation built from $\phi_{\mathrm{rad},1}$, not by assuming a series.
For $p\ge4$, the same mechanism yields a third term $\rho^3\,\phi_{\mathrm{rad},3}$, see
\eqref{radiation field 3rd order}-\eqref{eq:form of 3rd radi field}. The construction rests on the structural assumptions of \cref{sec:prelim} together with
spectral admissibility (\cref{def:specadm}) and the initial conormal bounds mentioned in \cref{app:GE}.
\end{remark}

\begin{proof}
We recall the expression from \cref*{lem:rewritingBoxg}
\[
	\Box_g = -2\rho\pa_{\ts}(\rho\pa_{\rho}-1 + \rho^2\widetilde{Q}) + \wh\Box(0) - g^{00}\pa_{t_*}^2.
\]
We rewrite the equation \eqref{I.V. problem} as follows
\begin{equation}
\label{eqn:dt*Q0 phi}
    \pa_{\ts}(\rho\pa_{\rho}-1)\phi = (2\rho)^{-1}(\wh\Box(0)\phi - a\phi^3 - g^{00}\pa_{t_*}^2 \phi) - \pa_{t_*}\rho^2\widetilde{Q} \phi.
\end{equation}
Integrating \eqref{eqn:dt*Q0 phi} and using the initial values in \eqref{I.V. problem}, we obtain
	\begin{equation}
		\label{eqn:Q0 phi full}
		\begin{split}
			(\rho\pa_{\rho}-1)\phi(t_*,x) = &(\rho\pa_{\rho}-1)u_0 + \frac{g^{00}}{2\rho} u_1 + \rho^2\widetilde{Q} u_0 - \frac{g^{00}}{2\rho}\pa_{\ts}\phi(\ts,x) - \rho^2\widetilde{Q}\phi(\ts,x) \\
			&+\frac{1}{2\rho}\int_0^{\ts} (\wh\Box(0)\phi(s,x)-a(x)\phi^3(s,x)) ds.
		\end{split}
	\end{equation}
It follows from \eqref{phi conormal est} that $\phi(\ts,x)\in \CI([0,\infty)_{\ts};\A^1(X))$, noting that $g^{00}\in \rho^2\CI(X)$, and in view of \eqref{eqn:Boxhat(0)} and \eqref{initial data u0, u1}, \eqref{a(t*,x) estimate}, we see that
	\begin{equation}
	\label{eqn:Q0 phi A2}
		(\rho\pa_{\rho}-1)\phi(t_*,x) \in \CI([0,\infty)_{\ts};\A^2(X)).
	\end{equation}
Proposition \ref{prop:bODEconormal} and the equation \eqref{eqn:Q0 phi A2} 
give the $\rho^1$ leading term of $\phi$ at $\mathscr{I}^+$, i.e. there exists $\phi_{\rm{rad},1}\in \CI([0,\infty)_{t_*};\CI(\S^2))$ such that
	\begin{equation}
		\label{rad field 1st}
		\phi(t_*,x) - \rho \phi_{\rm{rad},1}(\ts,\omega) \in \CI([0,\infty)_{\ts};\A^{2}(X)).
	\end{equation}
To iterate this argument, we expand $g^{00}\in \rho^2\CI(X)$ at $\mathbb{S}^2$ to write
\begin{equation}
\label{eqn:tilde g, g00}
    g^{00}(x) - \rho^2 \ti g(\omega) \in \A^3(X),\quad x=\rho^{-1}\omega,\ \;\omega\in\mathbb{S}^2 .
\end{equation}
Similarly, for $a(\ts,x)\in\CI(\RR_{\ts};\CI(X))$, we write the Taylor expansion at $\rho=0$,
\begin{equation}
\label{eqn:a a0 atilde}
    a(\ts,x) = a_0(\ts,\omega) + \rho\tilde{a}(\ts,x),\quad \tilde{a}(\ts,x) = \int_0^1 (\pa_\rho a) (\ts,(s\rho)^{-1}\omega)\,ds.
\end{equation}
It follows that $\tilde{a}$ satisfies the same estimate \eqref{a(t*,x) estimate} as $a$ does. In particular, we have
\begin{equation}
\label{eqn:a a0}
    a(\ts,x) - a_0(\ts,\omega) \in \CI([0,\infty)_{\ts};\A^{1}(X)).
\end{equation}
In view of \eqref{rad field 1st}, using \eqref{initial data u0, u1}, \eqref{eqn:tilde g, g00}, \eqref{eqn:a a0} and Lemma \ref{lem:rewritingBoxg}, we obtain a more precise version of \eqref{eqn:Q0 phi full} as follows, noticing that $\rho \pa_\rho -1$ annihilates $\rho$,
    \begin{equation}
		\label{eqn:Q0 phi 2nd}
		\begin{split}
			(\rho\pa_{\rho}-1)\phi(t_*,x) &\in
   \frac{\rho^2}{2}\int_0^{\ts} (\Delta_\omega\phi_{\rm{rad},1}(s,\omega)-a_0(s,\omega)\phi_{\rm{rad},1}^3(s,\omega)) \,ds + 
   \rho^2 c_2(\omega) \\
			&+ \frac{\rho^2 \ti g(\omega)}{2}(d_1(\omega) - \pa_{\ts}\phi_{\rm{rad},1}(\ts,\omega)) +   \CI([0,\infty)_{\ts};\A^{3}(X)).
		\end{split}
	\end{equation}
Noting that $(\rho \pa_{\rho} - 1)\rho^2 = \rho^2$, we can invert \eqref{eqn:Q0 phi 2nd} using Proposition \ref{prop:bODE0} to conclude
	\begin{equation*}
	    \phi(\ts,x) \in \rho\phi_{\rm{rad},1}(\ts,\omega) + \rho^2\phi_{\rm{rad},2}(\ts,\omega) + \CI([0,\infty)_{t_*}; \A^{3}(X)).
	\end{equation*}
In sum, if one writes $\phi = \rho \phi_{\mathrm{rad},1}+r_2$, then Prop. \ref{prop:bODE0} is a gain-of-decay lemma for the remainder term once we have (1) used Prop. \ref{prop:bODEconormal} to extract the kernel mode, and (2) built a particular $\rho^2$-term to match the $\rho^2$ forcing in $Q_0r_2$. Given (1) and (2), Prop.~\ref{prop:bODE0} upgrades the residual from $\mathcal A^2$ to $\mathcal A^3$.

One sees that the second radiation field $\phi_{\rm{rad},2}\in \CI([0,\infty)_{t_*};\CI(\S^2))$ is given explicitly by
	\begin{equation*}
		\begin{split}
			\phi_{\rm{rad},2}(\ts,\omega) &= c_2(\omega) + \frac{1}{2}\ti g(\omega) ( d_1(\omega) - \pa_{\ts}\phi_{\rm{rad},1}(\ts,\omega) ) \\ 
			&\quad\, + \frac{1}{2}\int_0^{\ts} (\Delta_\omega\phi_{\rm{rad},1}(s,\omega)-a_0(s,\omega)\phi_{\rm{rad},1}^3(s,\omega)) ds.
		\end{split}
	\end{equation*} 
\end{proof}

To study the behavior of $\phi$ near the corner $\{\rho=\ts^{-1}=0\}$ (see Figure \ref{fig:M+}), especially the decay of the first radiation field $\phi_{\rm{rad},1}(\ts,\omega)$ in $\ts$, we will prove that $\phi_{{\rm rad},1}$ allows a partial expansion $\phi_{\rm{rad},1} - f(\omega) \tau \in \A^{2-}([0,1)_\tau)$, $\tau = t_*^{-1}$, for some $f(\omega)\in\CI(\S^2)$, in the following lemma: 

\begin{lemma}
	\label{lem:rad field large time}
 Denote by $\p$ the solution of \eqref{I.V. problem}. Then near $\mathscr I^{+}\cap\{\ts>1\}$, we have
	\begin{equation}
		\label{radiation field 1}
		\begin{gathered}
			\phi_{\rm{rad},1}(\ts,\omega)\in \CI(\S^2;\A^1([0,1)_\tau),\quad \tau := \ts^{-1}, \\
			\phi - \rho \phi_{\rm{rad},1}(\ts,\omega) \in \rho^2 \A^0 ([0,1)_\rho \times [0,1)_\tau \times \mathbb{S}^2),
		\end{gathered}
	\end{equation}
where the $\A^0$ conormality is with respect to b-vector fields $\rho\pa_\rho$, $\tau\pa_\tau$, and $\Omega_j$. 
\end{lemma}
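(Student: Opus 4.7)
The proof has two parts, matching the two assertions. The overall strategy is to translate the pointwise symbolic estimate \eqref{phi conormal est} into b-conormality on the corner compactification $[0,1)_\rho\times[0,1)_\tau\times\S^2$, extract the trace at $\mathscr I^+=\{\rho=0\}$ to recover $\phi_{\rm rad,1}$, and then use the time-differentiated wave equation to upgrade the leading approximation by one order in $\rho$. Using $r\pa_r=-\rho\pa_\rho$ and $\ts\pa_{\ts}=-\tau\pa_\tau$, and absorbing the $\la\ts\ra^{-k}$ gain of \eqref{phi conormal est} by writing $(\ts\pa_{\ts})^k$ as a linear combination of $\ts^j\pa_{\ts}^j$ with numerical coefficients, the pointwise estimate converts to
\[
|(\rho\pa_\rho)^a(\tau\pa_\tau)^b\Omega^I\phi|\le C_{a,b,I}\la\ts+2|x|\ra^{-1}\la\ts\ra^{-1}\le C_{a,b,I}\,\rho\tau,
\]
so $\phi\in\rho\tau\A^0$ on the corner. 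In particular $\phi/\rho\in\tau\A^0$ is b-conormal with a well-defined trace at $\rho=0$; by \cref*{lem:rad field small time} this trace is $\phi_{\rm rad,1}(\ts,\omega)$. Commuting $(\tau\pa_\tau)^b\Omega^I$ past the $\rho=0$ restriction gives $|(\tau\pa_\tau)^b\Omega^I\phi_{\rm rad,1}|\le C_{b,I}\tau$ for all multi-indices, establishing the first claim $\phi_{\rm rad,1}\in\CI(\S^2;\A^1([0,1)_\tau))$.

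For the second claim, set $\psi:=\phi-\rho\phi_{\rm rad,1}$. Since $\rho\pa_\rho-1$ annihilates $\rho\phi_{\rm rad,1}$, we have $(\rho\pa_\rho-1)\psi=(\rho\pa_\rho-1)\phi$, and I would analyze the right-hand side via the time-differentiated form \eqref{eqn:dt*Q0 phi} of the wave equation,
\[
\pa_{\ts}(\rho\pa_\rho-1)\phi=\tfrac{1}{2\rho}\bigl[\wh\Box(0)\phi-a\phi^3-g^{00}\pa_{\ts}^2\phi\bigr]-\pa_{\ts}(\rho^2\widetilde Q\phi).
\]
The structural inputs $\wh\Box(0)\in\rho^2\Diffb^2$ and $g^{00}\in\rho^2\CI$ each gain two powers of $\rho$ in the linear sources; the cubic nonlinearity contributes $\phi^3\in\rho^3\tau^3\A^0$; and every $\pa_{\ts}$ costs a factor of $\tau$ via $\pa_{\ts}=\tau(\ts\pa_{\ts})$. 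Combined with the corner bound $\phi\in\rho\tau\A^0$, every summand is b-conormally bounded by $\rho^2\tau^k\A^0$ with $k\ge 1$. Integrating in $\ts$, together with the initial-data compatibility $(\rho\pa_\rho-1)u_0\in\rho^2\A^0$ from \eqref{initial data u0, u1} (the $\rho c_1(\omega)$ piece being annihilated by $\rho\pa_\rho-1$), I would conclude $(\rho\pa_\rho-1)\psi\in\rho^2\A^0$. A fibrewise-in-$(\tau,\omega)$ version of \cref*{prop:bODE0}, inverting the indicial operator $\rho\pa_\rho-1$ on conormal functions vanishing faster than $\rho$, then delivers $\psi\in\rho^2\A^0$.

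The principal obstacle is controlling the $\ts$-integration in Part~2: although every source carries the correct $\rho^2$-weight, the dominant ``linear'' contribution $\tfrac{\rho^2}{2}\Delta_\omega\phi_{\rm rad,1}$ only has the bare $\tau$-decay inherited from $\phi_{\rm rad,1}$, naively producing a logarithmic loss $\rho^2\log\la\ts\ra$ upon $\ts$-integration. Circumventing this calls for either a backward integration from $\ts=\infty$, using that $(\rho\pa_\rho-1)\phi$ vanishes at infinity by \eqref{phi conormal est}, or a bootstrap exploiting the angular constancy of the leading coefficient of $\phi_{\rm rad,1}$ (making $\Delta_\omega\phi_{\rm rad,1}$ decay strictly faster than $\phi_{\rm rad,1}$), consistent with the Price-law behavior established elsewhere in the paper. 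Navigating this cancellation is where the technical weight of the lemma lies.
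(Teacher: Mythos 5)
Your first part is essentially correct and takes a more direct route than the paper: converting \eqref{phi conormal est} into the corner bound $\phi\in\rho\tau\A^0$ and reading off the trace of $\phi/\rho$ at $\rho=0$ does give $\phi_{\rm rad,1}\in\CI(\S^2;\A^1([0,1)_\tau))$ --- provided you note that conormality alone does not produce a boundary trace (an $\A^0$ function need not have a limit at $\rho=0$), so the existence and identification of the limit must come from Lemma \ref{lem:rad field small time}, which you do invoke. The paper instead obtains this by blowing up the corner, passing to $(v,\tau)=(\rho\ts,\ts^{-1})$ and extracting the leading coefficient via Proposition \ref{prop:bODEconormal}.

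The second part has a genuine gap, and it is exactly the one you flag but do not resolve. Integrating $\pa_{\ts}(\rho\pa_\rho-1)\phi\in\rho^2\tau\A^0$ forward from $\ts=0$ at fixed $\rho$ gives $\int_0^{\ts}\la s\ra^{-1}ds\sim\log\ts$, hence only $(\rho\pa_\rho-1)\phi\in\rho^2\log(1/\tau)\,\A^0$, and $\log(1/\tau)\notin\A^0([0,1)_\tau)$, so the claimed remainder, uniform in $\tau$, does not follow. Neither proposed remedy closes this. Backward integration from $\ts=\infty$ fails outright: the integrand is only $O(\rho^2\ts^{-1})$ (coming from $\tfrac{\rho}{2}L_0\phi\ni\tfrac{\rho^2}{2}\Delta_\omega\phi_{\rm rad,1}$), which is not integrable on $[\ts,\infty)$. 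The bootstrap via angular constancy of the leading coefficient of $\phi_{\rm rad,1}$ is circular: that constancy is \eqref{eq:form of first radi}, proved in Section \ref{sec:global asymp} by matching leading terms at $I^+\cap\mathscr I^+$, and its derivation takes the present lemma as input. The missing ideas are the ones the paper uses: (i) split the corner into $\{\rho\ge\tau\}$, where $\phi-\rho\phi_{\rm rad,1}\in\rho\tau\A^0\subset\rho^2\A^0$ trivially, and $\{\rho<\tau\}$, blown up into the triangle $M_0'=[0,1)_v\times[0,1)_\tau\times\pa X$; and (ii) on $M_0'$, write $\Box_g=2v\tau^2(A+R)$ with $A=(\tau\pa_\tau-v\pa_v)(v\pa_v-1)$ and integrate the transport operator $\tau\pa_\tau-v\pa_v=-\ts\pa_{\ts}|_{\rho\ \mathrm{fixed}}$ along its characteristics $\{v\tau=\rho=\mathrm{const}\}$ starting from the lateral face $v=1/2$ (i.e.\ $\ts=r/2$), where the boundary value $[(v\pa_v-1)\phi](1/2,2v\tau)$ is already $O((v\tau)^2)=O(\rho^2)$ by the a priori bound. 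This confines the $\ts$-integration to $[1/\tau,\,r/2]$ and keeps the error in $\A^{2,2}(M_0')$ with the full $\tau^2$ weight, which is what the second claim and its later uses (\eqref{tilde phi decay}, \eqref{phicube expansion}) require; only then does Proposition \ref{prop:bODEconormal} deliver the $\rho^2$ remainder.
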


\begin{proof}
We %
follow the proof of \cite[Theorem 3.9]{Hin22}. Let
	\[
	v = \rho_{\mathscr I^{+},1} = \rho \ts,\quad \tau = \rho_{I^+,1} = \ts^{-1},
	\]
	near $\mathscr I^{+}\cap\{\ts>1\}\subset M_+$, so that
	\[
	\partial_{\ts} = \tau (-\tau\partial_\tau + v\partial_v),\quad \rho\partial_\rho = v\partial_v.
	\]
    First, we work in a triangular subset of $M_+$, 
    \[
        M_0' := [0,1)_v \times [0,1)_\tau \times \partial X.
    \]
    Then by Lemma \ref{lem:rewritingBoxg} we have
	\[
	\Box_g = 2v\tau^2 (A+R),\quad A = (\tau\partial_\tau - v\partial_v)(v\partial_v - 1),\quad R\in v\Diffb^2(M_0')
	\]
	Noting that $\ts\partial_{\ts} = -\tau\partial_\tau + v\partial_v$ and $\rho\partial_\rho = v\partial_v$, it then follows from \eqref{phi conormal est} that 
	\[
	\p\in \rho\tau \A^{0}(M_0') = v\tau^2 \A^{0}(M_0') = \A^{1,2}(M_0').
	\] 
	Therefore,
	\[
	A\p = (\tau\partial_\tau - v\partial_v)(v\partial_v - 1)\p = (2v\tau^2)^{-1} a \p^3 - R\p \in \A^{2,2}(M_0').
	\]
	Integrating $\tau\partial_\tau - v\partial_v$ along its characteristics gives
	\[
	[(v\partial_v - 1)\p](v,\tau) = [(v\partial_v - 1)\p](1/2,2v\tau) + \A^{2,2}(M_0').
	\]
	Note that $\p\in \A^{1,2}(M_0')$ implies that
	\[
	[(v\partial_v - 1)\p](1/2,2v\tau)\in (v\tau)^2 \A^{0}(M_0') = \A^{2,2}(M_0'),
	\]
	thus we have $(v\partial_v - 1)\p \in \A^{2,2}(M_0')$. Proposition \ref{prop:bODEconormal} then implies $\p \in \A^{((1,0),2),2}(M_0')$. More precisely, we have
	\begin{equation}
		\label{eqn:phi with v leading}
		\phi \in v g_1(\tau,\omega) + \A^{2,2}(M_0'),\quad g_1\in\CI(\S^2;\A^2([0,1)_\tau)).
	\end{equation} 
We can reinterpret \eqref{eqn:phi with v leading} using the relations: $v=\rho\ts$, $\tau=\ts^{-1}$, and $M_0' = ([0,1)_\rho \times [0,1)_\tau \times \S^2) \cap \{\rho<\tau\}$,
	\begin{equation}
		\label{eqn:phi with v leading new}
		\textrm{For }\rho<\tau,\quad \phi - \rho \ts g_1(\tau,\omega) \in \rho^2 \A^0([0,1)_\rho \times [0,1)_\tau \times \S^2).
	\end{equation}
	By comparing the newly derived expression in \eqref{eqn:phi with v leading new} with the previously obtained second-order expansion of the solution in \eqref{radiation field 2nd order}, we can draw two conclusions. First, we have
	\begin{equation}
		\label{1=t*g1}
		\phi_{\rm{rad},1} = \ts g_1
        \text{  where } \ts g_1 \in \CI(\S^2;\A^1([0,1)_\tau)),
        \end{equation}
	and second, 
	\begin{equation}
		\label{On triangle 1}
		\phi - \rho	\phi_{\rm{rad},1} \in \rho^2 \A^0 \left( \Big( [0,1)_\rho \times [0,1)_\tau \times \mathbb{S}^2 \Big) \cap \{\rho<\tau\} \right).
	\end{equation}
However, on the region $\rho\geq \tau$, $\rho\tau\A^0_{\rho,\tau,\omega} \subset \rho^2 \A^0_{\rho,\tau,\omega}$. Recalling \eqref{phi conormal est} that $\phi \in \rho\tau \A^0 ([0,1)_\rho \times [0,1)_\tau \times \mathbb{S}^2)$, we therefore have
	\begin{equation}
		\label{On traingle 2}
		\phi - \rho	\phi_{\rm{rad},1} \in \rho\tau \A^0 \subset \rho^2 \A^0 \left( \Big( [0,1)_\rho \times [0,1)_\tau \times \mathbb{S}^2 \Big) \cap \{\rho\geq\tau\} \right).
	\end{equation}
    Eq. \eqref{On triangle 1} and eq. \eqref{On traingle 2} together imply that the subleading term of $\phi$ possesses $\rho^2 \A^0$ conormality on the entire space.
	Combining \eqref{1=t*g1}, \eqref{On triangle 1} and \eqref{On traingle 2}, we obtain \eqref{radiation field 1}.
\end{proof}

We are ready to state our main theorem for the long-time asymptotics of $\phi$ in spatially compact sets. 

\begin{theorem}\label{thm:compactmain}
Let $\phi$ be the unique forward solution to the initial value problem \eqref{I.V. problem}, with initial data $(u_0,u_1)$ satisfying \eqref{initial data u0, u1} and with solution $\phi$ satisfying conormal bounds \eqref{phi conormal est}. Then $\phi$ satisfies
\begin{equation} 
	|\p(\ts,x) - 2c_0 \ts^{-2}| \leq C_K \ts^{-3+}, \quad x \in K
\end{equation}
for a fixed compact set $K \subset X^\circ$ (the set $X^\circ$ is equal to $\R^3$), where
\begin{equation}
\label{eq:leadcoefficient}
    c_0 = \frac{1}{4\pi}\int_{\S^2} \left(\int_0^\infty a_0(\ts,\omega)\phi_{\rm{rad},1}^3(\ts,\omega) d\ts - 2c_2(\omega)-\ti{g}(\omega)d_1(\omega)\right) d\omega ,
\end{equation}
with $\phi_{\rm rad,1}$ given in \eqref{radiation field 2nd order}, $c_2, d_1$ given in \eqref{initial data u0, u1}, $\tilde{g}$ given in \eqref{eqn:tilde g, g00}, and $a_0$ given in \eqref{eqn:a a0}. 
Moreover, the derivatives of $\phi$ decay as follows for $x\in K$,
\begin{equation}
\label{phi asymp spatial compact}
	|\pa_{\ts}^j \pa_x^\beta (\phi(\ts,x)-2c_0\ts^{-2})| \leq C_{j\beta,K} \ts^{-3-j+},\quad\forall\,j\in\NN,\ \beta\in\NN^3.
\end{equation}
\end{theorem}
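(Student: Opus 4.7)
The plan is to pass to the spectral picture via the $t_*$-Fourier transform, identify a $(-i\sigma)\log(-i\sigma)$-type singularity of $\hat\phi(\sigma,x)$ at $\sigma=0$ on compact spatial sets, and then invert the transform to read off the $t_*^{-2}$ tail together with its explicit leading coefficient. First I would extend $\phi$ by zero to $t_*<0$ and apply $\Box_g$ distributionally, producing $\Box_g\phi = H(t_*)a(t_*,x)\phi^3 + F_{\rm init}$ on $\R\times X^\circ$, where $F_{\rm init}$ is supported at $\Sigma_0$ and built from $u_0$, $u_1$, and the principal coefficients of $\Box_g$ along $\Sigma_0$. Taking the $t_*$-Fourier transform and using Mode stability together with the resolvent bounds of Section~3, one obtains $\hat\phi(\sigma,x)=\wh\Box(\sigma)^{-1}\hat f(\sigma,x)$ with $\hat f=\widehat{H(t_*)a\phi^3}+\hat F_{\rm init}$, valid for $\Im\sigma>0$, and hence the contour representation
\[ \phi(t_*,x)=\frac{1}{2\pi}\int_{\Im\sigma=\eps}e^{-i\sigma t_*}\hat\phi(\sigma,x)\,d\sigma, \]
which, after pushing the contour down to $\R$, reduces the large-$t_*$ asymptotics to the singular behavior of $\hat\phi$ near $\sigma=0$; the high-frequency contribution decays rapidly via \cref{lem:resolvent_regularity} together with integration by parts in $\sigma$.

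The core of the argument is to show
\[ \hat f(0,x) \in \rho^3 c(\omega) + \A^{4-}(X),\qquad c(\omega)=\int_0^\infty a_0(t_*,\omega)\phi_{{\rm rad},1}^3(t_*,\omega)\,dt_* - 2c_2(\omega) - \ti g(\omega)d_1(\omega). \]
The convergence of $\int_0^\infty a_0\phi_{{\rm rad},1}^3\,dt_*$ is guaranteed by the $\tau$-conormality $\phi_{{\rm rad},1}\in\CI(\S^2;\A^1([0,1)_\tau))$ from \cref{lem:rad field large time}, and the $\rho^3 c(\omega)$ coefficient should be extracted using the second-order radiation field expansion
\[ \phi=\rho\phi_{{\rm rad},1}+\rho^2\phi_{{\rm rad},2}+\CI([0,\infty)_{t_*};\A^3(X)) \]
of \cref{lem:rad field small time} together with the explicit formula \eqref{eq:form of 2nd radi field} for $\phi_{{\rm rad},2}$: the Duhamel piece contributes $\int_0^\infty a_0\phi_{{\rm rad},1}^3\,dt_*$ via $\phi^3\sim\rho^3\phi_{{\rm rad},1}^3$ near $\mathscr{I}^+$, while $\hat F_{\rm init}$ produces the $-2c_2(\omega)-\ti g(\omega)d_1(\omega)$ pieces after unpacking the initial data boundary terms using \eqref{initial data u0, u1}. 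Conormality of $\hat f(\sigma,x)$ in $\sigma$ near $0$ is controlled in parallel using \eqref{a(t*,x) estimate} and \eqref{phi conormal est}.

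Next I would feed $\hat f$ into the generalized low-energy resolvent expansion \cref{thm:resolvent expansion Sec 6}, our extension of \cref{Hintz Lemma 2.24} to partially conormal inputs whose boundary-leading order is $\rho^3$ rather than $\rho^2$. The corresponding model problem $\widetilde\Box v=\hat\rho^3\ti f$ has a model solution whose zf leading term on compact $x$-sets takes the form $(\sigma/\rho)\log(\sigma/\rho)$ instead of the $\log(\sigma/\rho)$ of \cref{lem:2.23Hintz}, with coefficient $-\tfrac{1}{4\pi}\int_{\S^2}c(\omega)\,d\omega$. Combined with a remainder lying in $\A^{2-}$ in $\sigma$ uniformly for $x\in K$, this yields
\[ \hat\phi(\sigma,x)= -\frac{1}{4\pi}\bigg(\int_{\S^2}c(\omega)\,d\omega\bigg)(-i\sigma)\log(-i\sigma)+R(\sigma,x),\quad x\in K, \]
with $R$ sufficiently regular at $\sigma=0$ to produce $O(t_*^{-3+})$ under inversion. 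A Hadamard finite-part computation identifies $(-i\sigma)\log(-i\sigma)\leftrightarrow -t_*^{-2}$, so that the leading coefficient of $\phi$ on $K$ equals $2c_0=\tfrac{1}{2\pi}\int_{\S^2}c(\omega)\,d\omega$, matching \eqref{eq:leadcoefficient}. The derivative bounds \eqref{phi asymp spatial compact} then follow by differentiating the contour representation in $t_*$ and in $x$ and using stability of the resolvent expansion under $\pa_\sigma$ and under b-vector fields on $X$.

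The principal obstacle will be the partial expansion of $\hat f_0$ in the second step: isolating the $\rho^3$ coefficient demands a precise cancellation between the Duhamel contribution from the nonlinearity and the initial data boundary terms, and this is exactly the purpose of the second-order radiation field $\phi_{{\rm rad},2}$ derived in \cref{lem:rad field small time}. A secondary technical obstacle is the generalization of \cref{Hintz Lemma 2.24} to $\rho^3$-leading inputs, whose model solution requires a new indicial analysis of $\widetilde\Box$ beyond the $\hat\rho^2$ case; this generalization is precisely \cref{thm:resolvent expansion Sec 6} and is developed in Section~6.
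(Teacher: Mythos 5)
Your overall architecture (Fourier transform in $t_*$, low/high frequency splitting, low-energy resolvent expansion producing a $\sigma\log\sigma$ singularity, inversion to $t_*^{-2}$) matches the paper's, and your reduction to a forcing problem by extension-by-zero rather than by the smooth cutoff $\chi(t_*)$ is a legitimate variant: it makes the identification of the coefficient $c(\omega)$ of $\rho^3$ in $\wh f(0,\cdot)$ more direct (the $-2c_2-\ti g d_1$ terms come straight from $-2\rho Q u_0-g^{00}u_1$, with no cutoff-independence check needed at the end), at the price of a distributional forcing whose transform $\wh F_{\rm init}(\sigma)$ grows linearly in $\sigma$, so your high-frequency step needs a symbol/conormal-at-$t_*=0$ argument rather than the "integration by parts gives rapid decay" you assert; the paper already has to work for this step (Section~4.2.1) even with its smooth forcing.

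The genuine gap is in your third step. You propose to feed $\wh f$ into Theorem~\ref{thm:resolvent expansion Sec 6}, described as the extension of Proposition~\ref{Hintz Lemma 2.24} to $\rho^3$-leading inputs via a new model problem $\widetilde\Box v=\hat\rho^3\ti f$. This cannot work here: Theorem~\ref{thm:resolvent expansion Sec 6} carries the hypothesis $\int_{\S^2}c(\omega)\,d\omega=0$ and accordingly produces a $\sigma^2\log\sigma$ singularity (the $t_*^{-3}$ Price-law regime of Section~6, $p\ge 4$). In the cubic case the whole point is that $\int_{\S^2}c(\omega)\,d\omega=4\pi c_0$ is generically nonzero, and the singularity is one order lower. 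The correct mechanism, which the paper uses in Section~4.2.2, is a two-step iteration of the resolvent identity \eqref{resolvent identity: cubic}: first $u=\wh\Box(0)^{-1}(c(\omega)\rho^3+\A^4)$ acquires a $-c_0\,\rho\log\rho$ term \eqref{eqn:expansion of u}; then $-\sigma^{-1}(\wh\Box(\sigma)-\wh\Box(0))u$ produces the effective input $2ic_0\rho^2$ at order $\sigma$ \eqref{eqn:f_1(sigma)}, to which the existing $\hat\rho^2$ model problem (Proposition~\ref{Hintz Lemma 2.24}, i.e.\ Lemma~\ref{lem:2.23Hintz}) applies and yields the $\sigma\log(\sigma+i0)$ singularity. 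No $\hat\rho^3$ indicial analysis is needed, and invoking one in place of this iteration leaves the actual source of the $t_*^{-2}$ tail unproved. (Two smaller points: your displayed pointwise formula for $c(\omega)$ is specific to the extension-by-zero reduction and differs pointwise from the paper's \eqref{eqn:c(omega)} — only the spherical averages agree, which is all that enters $c_0$; and your final constant bookkeeping, $-\tfrac{1}{4\pi}\int c\,d\omega$ times $(-t_*^{-2})$, gives $c_0t_*^{-2}$ rather than the asserted $2c_0t_*^{-2}$, so the normalization of the inverse transform of $\sigma\log(\sigma+i0)$ needs to be redone carefully.)
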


\begin{remark}
\label{rmk:general intial data p=3}
In Theorem \ref{thm:compactmain} we treat more general initial data than the compactly supported smooth initial data assumed in Theorem \ref{thm:compactmain intro}. More specifically, we assume that the initial data $u_0$ and $u_1$ lie in certain conormal spaces that allow partial expansions, namely $\A^{((1,0),3)}(X)$ and $\A^{((1,0),2)}(X)$, respectively, see \eqref{initial data u0, u1} and Definition \ref{defn:partial expansion}. The compactly supported smooth $u_0$ and $u_1$ satisfy this assumption trivially, by noticing that $u_0,\,u_1\in\CcI(\RR^3)$ can be expanded as in \eqref{initial data u0, u1} with $c_1=c_2=d_1=0$.

For the Schwarzschild metric, one simply has $\tilde{g}(\omega)=-2m^2$, where $m$ is the mass, in the formula \eqref{eq:leadcoefficient}. For an explicit computation, we refer to \cite[Section 4.1]{Hin22}.
\end{remark}

We convert the initial value problem to a forcing problem by introducing a cutoff function. The forcing problem allows us to decompose its solution into low frequency, high frequency, and intermediate frequency parts. Each part is then analyzed separately using Fourier analysis and resolvent expansion/estimates. By combining the asymptotic expansion for each part, we derive the full asymptotic expansion. 

\subsection{Preliminaries for the proof of Theorem \ref{thm:compactmain}}
\label{subsection:Preliminaries for spatially compact asymp}
In order to convert the initial value problem \eqref{I.V. problem} to a forcing problem, we fix a cutoff $\chi(\ts)\in \CI(\RR_{\ts};\RR)$ which is identically equal to $0$ when $\ts\leq 1/2$ and identically equal to
$1$ when $\ts\geq 1$. Then $\phi_\chi := \chi\phi$ vanishes for $\ts\leq 1/2$ and solves the following forcing problem 
\begin{equation}
\label{forcing problem}
	\Box_g \phi_\chi = [\Box_g , \chi(\ts)]\phi + \chi(\ts)a(\ts,x)\phi^3 =: f,\quad \textrm{on }\RR_{\ts}\times X^\circ.
\end{equation}
Here $f$ can be computed, in view of Lemma \ref{lem:rewritingBoxg},
\begin{equation}
\label{the forcing f}
    f = \chi a\phi^3 - 2\chi'(\rho Q + g^{00}\pats)\phi - \chi'' g^{00} \phi,
\end{equation}
all terms on the right-hand side other than the first are compactly supported in $\ts$. Combining \eqref{a(t*,x) estimate}, \eqref{phi conormal est}, \eqref{eqn:Q0 phi A2} and the assumptions on $\chi(\ts)$, and recalling that $Q=(\rho\pa_{\rho}-1)+\rho^2 \widetilde{Q}$, $g^{00}\in \rho^2\CI(X)$, we conclude that for any $k,m\in\mathbb{N}$,
\begin{equation}
\label{eqn:forcing f decay}
	\pa_{t_*}^k(\ts\pa_{t_*})^m \Gamma_b^I f(\ts,x) = \mathcal{O}(\la t_*\ra^{-3-k} \la t_*+2r\ra^{-3}).
\end{equation}
It follows that $\wh{f}(\sigma,x) = \int_\RR e^{i\sigma \ts} f(\ts,x) d\ts$ is well-defined. For future reference, we note that for any $k,m\in\mathbb{N}$,
\[
	\sigma^k (\sigma\pa_\sigma)^m \wh{f}(\sigma,x) = i^k (-1)^m \int_\RR e^{i\sigma\ts} \pa_{\ts}^k (\pa_{\ts} \ts)^m f(\ts,x) d\ts.
\]
Therefore, using \eqref{eqn:forcing f decay}, we can deduce that
\begin{equation}
\label{hat f rapid decay}
	\forall k,m\in\mathbb{N},\quad \sigma^k(\sigma\pa_\sigma)^m \widehat{f}(\sigma,x) \in L^{\infty}(\R_\sigma; \A^3(X)).
\end{equation}
Here $\A^3(X)$ is equipped with all the seminorms $\|\cdot\|_{j,3}$, $j\in\NN$, defined as follows
\begin{equation}
\label{seminorms on A ell}
	\|u\|_{j,\ell} := \sum_{q} \|\rho^{-\ell} A_{jq}u\|_{L^\infty(X)},\quad u\in \A^\ell(X),
\end{equation}
where the $A_{jq}$'s form a finite basis of $\Diffb^j(X)$ over $\CI(X)$. Thus \eqref{hat f rapid decay} means that every seminorm $\|\sigma^k(\sigma\pa_\sigma)^m \widehat{f}(\sigma,x)\|_{j,3}$ is uniformly bounded in $\sigma\in\RR$.

Similarly, we can define $\wh{\phi_\chi} (\sigma,x) := \int_\RR e^{i\sigma\ts} \chi(\ts)\phi(\ts,x) d\ts$, where we used the fact that $\phi_\chi \in L^1(\RR_{\ts};\A^{1-}(X))$ due to \eqref{phi conormal est}. Moreover, we note that for any $N\in\NN$,
\[
	\sigma^N \wh{\phi_\chi} (\sigma,x) = i^N \int_\RR e^{i\sigma\ts} \pa_{\ts}^N (\chi(\ts)\phi(\ts,x)) d\ts.
\]
Thus, we deduce from \eqref{phi conormal est} that
\[
	\la\sigma\ra^N \wh{\phi_\chi} (\sigma,x) \in L^\infty(\RR_{\sigma};\A^{1-}(X)),
\]
which shows that $\wh{\phi_\chi} \in L^1(\RR_{\sigma};\A^{1-}(X))$. Therefore, by the Fourier inversion theorem, 
\begin{equation}
\label{FT*IFT=1}
	\phi_\chi(\ts,x) = \frac{1}{2\pi} \int_\RR e^{-i\sigma\ts} \wh{\phi_\chi}(\sigma,x) d\sigma.
\end{equation}

We recall the spectral family of $\Box_g$ given in \eqref{eqn:Boxhatsigma}:
\[
	\wh\Box (\sigma) = e^{i\sigma \ts}\Box_g e^{-i\sigma \ts} = 2i\xs\rho Q + \hat\Box(0) + \xs^2g^{00}.
\]
It then follows from \eqref{forcing problem} that 
\[
	\wh\Box (\sigma) \wh{\phi_\chi} (\sigma,x) = \wh{f}(\sigma,x).
\]  
Recalling from Lemma \ref{lem:resolvent_regularity} and Theorem \ref{thm:low_energy} the mapping properties of the resolvent $\wh\Box (\sigma)^{-1}$, $\Im\sigma\geq 0$, we get
\[
	\wh{\phi_\chi} (\sigma,x) = \wh\Box (\sigma)^{-1} \wh{f}(\sigma,x).
\]
Applying this to \eqref{FT*IFT=1}, we conclude that
\begin{equation}
\label{phichi by IFT}
	\phi_\chi (t_*,x) = \frac{1}{2\pi}\int_{\R} e^{-i\sigma t_*} \wh\Box(\sigma)^{-1} \widehat{f}(\sigma,x) d\sigma.
\end{equation}
We  now follow \cite[Theorem 3.4]{Hin22} to split $\phi_\chi=\phi_0 + \phi_1 + \phi_2$,
\begin{equation}
	\label{eqn:phi012}
	\begin{split}
		\widehat{\phi_0}(\sigma) &:= \psi(\sigma)\wh\Box(\sigma)^{-1} \wh{f}(0), \\
		\widehat{\phi_1}(\sigma) &:= \sigma\psi(\sigma)\wh\Box(\sigma)^{-1}(\sigma^{-1}(\wh{f}(\sigma)-\wh{f}(0))), \\
		\widehat{\phi_2}(\sigma) &:= (1-\psi(\sigma))\wh\Box(\sigma)^{-1}\wh{f}(\sigma),
	\end{split}
\end{equation}
where $\psi\in \CcI((-1,1)_\sigma)$ is identically equal to $1$ for $\sigma$ near $0$, and we require $\psi$ to be even.

\subsection{Proof of Theorem \ref{thm:compactmain}} 
Using the decomposition \eqref{eqn:phi012}, we shall first show that the high frequency part $\phi_2$ has rapid decay; we then consider the low frequency part $\phi_0$, where by using a low-energy expansion we show that $\phi_0$ contributes to the leading term $2c_0\ts^{-2}$; we finally illustrate that the piece $\phi_1$ has a higher order of $\ts$ decay than $\phi_0$ and contributes to the remainder $\mathcal{O}(\ts^{-3+})$.

\subsubsection{High frequency contribution} 
\label{subsubsection:high frequency}
For the high frequency part $\phi_2$, we work explicitly with its Fourier transform $\hat\phi_2(\sigma)$.
We assert that
\begin{equation}
\label{phi2 rapid decay}
    \phi_2(t_*,x) \in \mathscr{S}(\mathbb{R}_{t_*};\A^{1-}(X)).
\end{equation}
We remark that in contrast to the case of forcing terms $f$ which are $\CI_c$ in $t_*$, we cannot simply claim that $\hat\phi_2$ is Schwartz in $\sigma$. Thus our argument for the high frequency piece $\phi_2$ will be significantly more involved than in \cite{Hin22}.
We will now compute, for $k,m\in\mathbb{N}$,
\[
	\pa_\sigma^m ((1-\psi(\sigma))\wh\Box(\sigma)^{-1}(\sigma^k\wh{f}(\sigma))).
\]
We see that it splits into terms of the form
\begin{equation}
\label{split integrand}
   (\partial_\sigma^{m_1}(1-\psi))(\partial_\sigma^{m_2} \wh\Box(\sigma)^{-1}) \pa_\sigma^{m_3} (\sigma^k\wh{f}),\quad m_1+m_2+m_3 = m.
\end{equation}
We recall from \eqref{boxhat resolv semiclassical bounds} that for $s,\ell,\sigma$ satisfying
\begin{equation}
	\label{eqn:s,ell,sigma}
	s\in\RR,\quad \ell<-\frac{1}{2},\quad s+\ell>-\frac{1}{2},\quad |\sigma|\geq R,
\end{equation} 
there exists a uniform constant $C>0$ such that
\begin{equation}
\label{deriv resolvent bounds}
    \|\partial_\sigma^m \wh\Box(\sigma)^{-1} f\|_{\Hb^{s-m,\ell}(X)} \leq C|\sigma|^{s-(m+1)(1-\delta)} \|f\|_{\Hb^{s,\ell+1}(X)}.
\end{equation}
We used $h^k \|u\|_{H^k_{\rm b}} \le \|u\|_{H^k_{{\rm b}, h}}$.
For every $k,m,s\in\NN$, we choose $N$ sufficiently large and rewrite terms in \eqref{split integrand}, using the fact that $\sigma$ commutes with the differentiated resolvent $\partial_\sigma^m \wh\Box(\sigma)^{-1}$, 
\begin{equation}
\label{split integrand 2}
    (\partial_\sigma^{m_1}(1-\psi))\sigma^{-N}(\partial_\sigma^{m_2} \wh\Box(\sigma)^{-1}) \sigma^{N}\pa_\sigma^{m_3} (\sigma^k \wh{f}).
\end{equation}
By a direct computation, we see that for $N$ sufficiently large,
\[
	\sigma^{N}\pa_\sigma^{m_3} (\sigma^k \wh{f}) = \sigma^{N+k-m_3} \sum_{q\leq k,m_3} c_q \sigma^{m_3-q} \pa_\sigma^{m_3-q} \wh{f},
\]
noting that for any $m\in\NN$,
\[
    \sigma^m \pa_\sigma^m = (\sigma\pa_\sigma)^{m} + a_1 (\sigma\pa_\sigma)^{m-1} + \cdots + a_{m-1}\sigma\pa_\sigma,\quad a_1,\ldots, a_{m-1}\in\mathbb{Z},
\]
Applying the above formulas with \eqref{hat f rapid decay} and \eqref{deriv resolvent bounds}, recalling from \eqref{EqASobolevEmb} that $\A^3(X)\subset \A^2(X) \subset \Hb^{s,\ell+1}(X)$ for any $s,\ell$ satisfying \eqref{eqn:s,ell,sigma}, we get
\[
    \|\sigma^{-N}(\partial_\sigma^{m_2} \wh\Box(\sigma)^{-1}) \sigma^{N}\pa_\sigma^{m_3} (\sigma^k \wh{f})\|_{\Hb^{s-m_2,\ell}(X)} \leq C|\sigma|^{s-N}.
\]
Here we note that $|\xs| \ge R > 0$. Combining this with \eqref{split integrand 2}, and noting that $N$ can be chosen sufficiently large, we obtain
\[
	\|\pa_\sigma^m ((1-\psi(\sigma))\wh\Box(\sigma)^{-1}(\sigma^k\wh{f}(\sigma)))\|_{\Hb^{s-m,\ell}(X)} \leq C|\sigma|^{s-N}.
\]
This justifies the following integration by parts. Using the fact that $\sigma$ commutes with the differentiated resolvents $\partial_\sigma^m \wh\Box(\sigma)^{-1}$, $m\in\NN$, we compute directly:
\begin{equation}
\label{t*4 phi2 expression}
(it_*)^m (i\partial_{\ts})^k \phi_2 = \frac{1}{2\pi}\int_{\R} e^{-i\sigma t_*} \pa_\sigma^m ((1-\psi(\sigma))\wh\Box(\sigma)^{-1}(\sigma^k\wh{f})) d\sigma.
\end{equation}
Therefore, we conclude that for every $k,m\in\mathbb{N}$,
\[
    \|t_*^m \pa_{t_*}^k \phi_2\|_{\Hb^{s-m,\ell}(X)} \leq C,\quad\textrm{$\forall s,\ell$ satisfying \eqref{eqn:s,ell,sigma}}.
\]
This, with \eqref{EqASobolevEmb} proves \eqref{phi2 rapid decay}, since $s$ can be arbitrarily large in the sense that for any $\epsilon>0$, $t_*^m \pa_{t_*}^k \phi_2\in\A^{1-\epsilon}(X)$ has all bounded seminorms $\|t_*^m \pa_{t_*}^k \phi_2\|_{j,1-\epsilon}$, $j\in\NN$; see the definition in \eqref{seminorms on A ell}. The $\eps$ equals the gap between $\ell$ and $-\frac12$, $\eps = |\ell -(-\frac12)|$. 

\subsubsection{Low frequency part $\phi_0$}
\label{subsubsection:low frequency part}
We begin with a more precise description of 
\[
	\wh{f}_0 (x) := \wh{f}(0,x) = \int_\RR f(\ts,x) d\ts,
\]
than $f_0\in \A^3(X)$ which follows from \eqref{hat f rapid decay}.
Lemma \ref{lem:rad field large time} shows that $\phi_{{\rm rad},1} (\ts,\omega) = \mathcal{O}(\ts^{-1})$ and that 
\begin{equation}
\label{tilde phi decay}
	\phi-\rho\phi_{{\rm rad},1} \in \rho^2 \A^0 ([0,1)_{\rho,\tau}^2 \times \mathbb{S}^2) \cap \rho\tau \A^0 ([0,1)_{\rho,\tau}^2 \times \mathbb{S}^2),
\end{equation}
with the $\rho\tau\A^0$ estimate coming from \eqref{phi conormal est} and the first estimate for $\phi_{{\rm rad},1}$ in \eqref{radiation field 1}.  Then
\begin{equation}
	\label{phicube explicit}
	\begin{split}
		\phi^3 &= \rho^3\phi_{\rm rad,1}^3 + 3\rho^2\phi_{\rm rad,1}^2(\phi-\rho\phi_{{\rm rad},1}) + 3\rho\phi_{\rm rad,1}(\phi-\rho\phi_{{\rm rad},1})^2 + (\phi-\rho\phi_{{\rm rad},1})^3.
	\end{split}
\end{equation}
It follows from \eqref{tilde phi decay} and \eqref{eqn:a a0 atilde}, \eqref{eqn:a a0} that
\begin{equation}
\label{phicube expansion}
	a(\ts,x) \phi^3 - a_0(\ts,\omega)\rho^3 \phi_{\rm rad,1}^3(\ts,\omega) \in  \A^2([0,1)_{\tau} ; \A^4(X)),\quad \tau=t_*^{-1}.
\end{equation}
Next, we consider the remaining terms in \eqref{the forcing f}, which are all supported in $0<\ts<1$. Using the second-order expansion \eqref{radiation field 2nd order}, together with \eqref{eqn:tilde g, g00}, 
\begin{equation}
	\begin{split}
		f - \chi a\phi^3 &= -2\chi'\rho Q\phi - 2\chi' g^{00}\pa_{\ts}\phi - \chi''g^{00}\phi \\
		&\in -\rho^3(2\chi' \phi_{\rm{rad},2} + 2\chi' \ti{g} \pa_{\ts}\phi_{\rm{rad},1} + \chi'' \ti{g}\phi_{\rm{rad},1}) + \CcI((0,1)_{\ts};\A^4(X)).
	\end{split}
 \label{eq:f expansion}
\end{equation}
Even though we only proved $\phi_{{\rm rad},2} \in \A^0_\tau$, the $-2\rho^3\chi'\phi_{{\rm rad},2}$ term is compactly supported in $t_*$, thus integrable on $\RR_{\ts}$. We have in view of \eqref{phicube expansion}, 
 \begin{equation*}
        \int f(\ts,x) \,d\ts \in  \rho^3 \int (\chi a_0 \phi^3_{ {\rm rad},1} -2\chi' \phi_{{\rm rad},2}
        - 2\chi' \ti g \pats \phi_{{\rm rad},1} - \chi''\ti g \phi_{{\rm rad},1}) \,d\ts + \A^4(X) .
\end{equation*} 
We conclude that
\begin{equation}
\label{eqn:fhat}
    \wh{f}_0(x) = \int_\RR f(\ts,x) d\ts \in c(\omega)\rho^3 + \A^4(X),
\end{equation}
where, in view of $\chi' \pa_{\ts}\phi_{\rm{rad},1} + \chi'' \phi_{\rm{rad},1} = \pa_{\ts} (\chi' \phi_{\rm{rad},1})$, we have 
\begin{equation}
\label{eqn:c(omega)}
\begin{split}
	c(\omega) &:= \int_\RR (\chi a_0 \phi_{\rm rad,1}^3 - 2\chi' \phi_{\rm{rad},2} - 2\chi' \ti{g} \pa_{\ts}\phi_{\rm{rad},1} - \chi'' \ti{g}\phi_{\rm{rad},1}) d\ts \\
	&=  \int_\RR (\chi a_0 \phi_{\rm rad,1}^3 - 2\chi' \phi_{\rm{rad},2} - \chi' \ti{g} \pa_{\ts}\phi_{\rm{rad},1}) d\ts
\end{split}
\end{equation}since by the fundamental theorem of calculus and the compact support of $\chi'(\ts)$, the integral $$\int \pats(\chi'(\ts) \phi_{{\rm rad},1}(\ts,\xo)) \,d\ts$$ vanishes. We remark that the d$t_*$ integral of an element of $\cA^2([0,1)_\tau)$ is bounded, since $\A^2([0,1)_\tau)$ corresponds to $\mathcal{O}(\ts^{-2})$-decay. Thus we deduce the $\A^4(X)$ term in \eqref{eqn:fhat} from \eqref{phicube expansion}. 

To derive a low energy expansion for $\wh\Box(\sigma)^{-1} \wh{f}_0$,
we use the following resolvent identity:
\begin{equation}
\label{resolvent identity: cubic}
    \wh\Box(\sigma)^{-1} \wh{f}_0 = \wh\Box(0)^{-1} \wh{f}_0 - \sigma \wh\Box(\sigma)^{-1} [\sigma^{-1} (\wh\Box(\sigma) - \wh\Box(0)) \wh\Box(0)^{-1} \wh{f}_0].
\end{equation}
This identity can also be equivalently written without the $1=\xs \xs^{-1}$ factor, and the above formulation views the difference quotient as a sort of derivative especially for small $\sigma$. We shall use the following notation for the first term in the expansion of the resolvent:
\[
    u:= \wh\Box(0)^{-1}\wh{f}_0,\quad \wh\Box(0) u = \wh{f}_0 \in c(\omega)\rho^3 + \A^4(X).
\]
Recalling the decomposition of $\Box_g$ in Lemma \ref{lem:rewritingBoxg}, we have
\begin{equation}
\label{eqn:L0u}
    L_0 u \in c(\omega) \rho + \A^{2-}(X), \quad L_0 = -(\rho\pa_\rho)^2 + \rho\pa_\rho + \Delta_\omega
\end{equation}
for which we used the fact that $u=\wh\Box(0)^{-1}\wh{f}_0\in \A^{1-}(X)$ due to \eqref{EqAZeroMapping}, as $\wh{f}_0 \in \A^3(X)$ (indeed, $\wh{f}_0 \in \A^{3-}(X)$ suffices). Following the same computation as in \cite[Lemma 3.3]{Hin22}, we can conclude from \eqref{eqn:L0u} that
\begin{equation}
\label{eqn:expansion of u}
    u = -c_0 \rho\log\rho + \ti u,\quad \ti u\in \A^{((1,0),2-)}(X),
\end{equation}
where the leading term of $\ti u$ is $\rho Y(\xo)$ for some $Y\in\CI(\mathbb{S}^2)$, and
\begin{equation}
\label{eqn:coefficient c0}
    c_0 = \frac{1}{4\pi} \int_{\S^2} c(\omega)\, d\omega .
\end{equation} 
(In fact, to achieve \eqref{eqn:expansion of u}, one only needs  $\wh\Box(0)u - c(\omega)\rho^3 \in \A^{4-}(X)$, as in \cite[Lemma 3.3]{Hin22}.) In view of \eqref{eqn:expansion of u} and \eqref{eqn:Boxhatsigma}, with the latter being an explicit decomposition of $\wh\Box(\xs)$ into a polynomial in $\xs$ with coefficients being operators, we can compute the next term in \cref*{resolvent identity: cubic},
\begin{equation}
\label{eqn:f_1(sigma)}
    \begin{gathered}
        f_1(\sigma) := -\sigma^{-1}(\wh\Box(\sigma) - \wh\Box(0)) u = (-2i\rho(Q_0 + \rho^2\widetilde{Q})-g^{00}\sigma)u \\
        \Rightarrow  f_1(\sigma) = 2ic_0 \rho^2 + i\ti{f_1}(x) - \sigma g^{00} u,\quad  \ti{f_1}\in \A^{3-}(X;\R).
    \end{gathered}
\end{equation} 
Here $\ti{f_1}$ is real-valued because $\ti u$ is real-valued. We have also used the fact that $Q_0 \rho = 0$ and thus $Q_0=\rho\pa_\rho - 1$ eliminates the leading term in $\ti u$. So we rewrite \eqref{resolvent identity: cubic},
\begin{equation}
\label{resolvent full expansion}
    \wh\Box(\sigma)^{-1} \wh{f}_0 = u + 2ic_0\sigma\wh\Box(\sigma)^{-1}(\rho^2) + i\sigma\wh\Box(\sigma)^{-1}\ti{f_1} + \sigma^2 \wh\Box(\sigma)^{-1}(g^{00}u).
\end{equation}
Recalling Proposition \ref{Hintz Lemma 2.24}, we have
\[
    2ic_0 \sigma\wh\Box(\sigma)^{-1}(\rho^2) \in \sigma\A^{1-,((0,0),1-),((0,1),1-)}(X^+_{\rm res}),
\]
whereas the remaining terms are more regular in $\sigma$. In view of Lemma \ref{lem:aux}, which computes the target space of $\wh\Box(\xs)^{-1}$ on $\A^\beta([0,1)_\xs;\A^{2+\alpha}(X))$, $\beta \in \RR$, $\alpha\in (0,1)$ inputs, and noting that $\ti{f_1}, g^{00}u\in \A^{3-}(X)$, we have
\[
    \begin{gathered}
        i \sigma\wh\Box(\sigma)^{-1} \ti{f_1} \in \sigma \A^{1-, 1-, ((0,0),1-)}(X^+_{\rm res}); \\
        \sigma^2\wh\Box(\sigma)^{-1} (g^{00}u)\in \sigma^2 \A^{1-, 1-, ((0,0),1-)}(X^+_{\rm res})\subset \A^{1-,2-,2-}(X^+_{\rm res}).
    \end{gathered}
\]
Applying the resolvent identity \eqref{resolvent identity: cubic} to $\ti{f_1}(x)$, with the help of \cite[Lemma 2.17]{Hin22} , we have more explicitly
\[
    \sigma\wh\Box(\sigma)^{-1} \ti{f_1} \in \sigma \wh\Box(0)^{-1} \ti{f_1} + \A^{1-,2-,2-}(X^+_{\rm res})
\]
where $\wh\Box(0)^{-1} \ti{f_1} \in \A^{1-,1-,1-}(X^+_{\rm res};\R)$ is real-valued. To see this, noting that $\wh\Box(0)^{-1} : \A^{3-}(X) \to \A^{1-}(X)$, we can write
\[
\begin{split}
	\sigma\wh\Box(\sigma)^{-1} \ti{f_1} - \sigma \wh\Box(0)^{-1} \ti{f_1}
	& = \sigma^2 \wh\Box(\sigma)^{-1}[-\sigma^{-1}(\wh\Box(\sigma) - \wh\Box(0))\wh\Box(0)^{-1} \ti{f_1}] \\
	&\in \sigma^2 \wh\Box(\sigma)^{-1} (\A^{2-}(X)+\sigma\A^{3-}(X)).
\end{split}
\]
Recalling the mapping property $\wh\Box(\xs)^{-1}:\A^{2-}\to \A^{1-,0-,0-}$ from \cite[Lemma 2.17]{Hin22}, we conclude that we are done since $\xs^2\A^{1-,0-,0-}= \A^{1-,2-,2-}$.

To conclude, we have obtained an expansion for frequencies $\sigma>0$,
\begin{equation}
\label{decompose u sing and reg}
    \wh\Box(\sigma)^{-1} \wh{f}_0 = u_{\rm{sing}}(\sigma) + u_{\rm{reg}}(\sigma),
\end{equation}
\begin{equation}
\label{u singular}
    u_{\rm{sing}}(\sigma) \in \sigma\A^{1-,((0,0),1-),((0,1),1-)}(X^+_{\rm res});
\end{equation}
\begin{equation}
\label{u regular}
    u_{\rm{reg}}(\sigma) \in \wh\Box(0)^{-1} \wh{f}_0 + i\sigma \wh\Box(0)^{-1}\ti{f_1} + \A^{1-,2-,2-}(X^+_{\rm res}).
\end{equation}
The leading term of $\sigma^{-1} u_{\rm{sing}}(\sigma)$ at zf is $-2ic_0 \log(\sigma/\rho)$, and at tf it is $2ic_0 \ti u_{\rm mod}$, with $\ti u_{\rm mod} = \widetilde\Box^{-1}(\sigma^2/\rho^2)$ defined in Lemma \ref{lem:2.23Hintz}. We also remark that $\wh\Box(0)^{-1} \wh{f}_0$, $\wh\Box(0)^{-1} \ti{f_1}$ are both real-valued; this can be seen by \eqref{resolvent negative sigma 0}, which implies that for real inputs $g$, the zero-energy inverse is real-valued.

To prove Theorem \ref{thm:compactmain}, we restrict our attention to spatially compact sets $K\Subset X^\circ$ i.e. where $\rho>c>0$ for some constant $c$. This will allow us to take all terms in the resolvent expansion \eqref{decompose u sing and reg}--\eqref{u regular} and replace the conormal space $\A^{1-}(X)$ with $\CI(X^\circ)$. We note that for real $\sigma$,
\begin{equation}
\label{resolvent negative sigma 0}
    \wh\Box(\sigma)^{-1} g = \overline{\wh\Box(-\sigma)^{-1}\overline{g}}.
\end{equation}
It follows that the smooth terms of $u_{\rm{reg}}$ in \eqref{u regular} for $\sigma >0$ and $\sigma<0$ can be glued together as
\[
    \wh\Box(0)^{-1} \wh{f}_0 + i\sigma \wh\Box(0)^{-1}\ti{f_1} \in\CI((-1,1)_\sigma;\CI(X^\circ)),
\]
since 
\[
	\overline{\wh\Box(0)^{-1} \wh{f}_0 + i(-\sigma)\wh\Box(0)^{-1}\ti{f_1}} = \wh\Box(0)^{-1} \wh{f}_0 + i\sigma \wh\Box(0)^{-1}\ti{f_1},
\]
for which we recalled that $\wh\Box(0)^{-1} \wh{f}_0$ and $\wh\Box(0)^{-1}\ti{f_1}$ are both real-valued. Arguing as in \cite[Theorem 3.4]{Hin22} to glue the pieces $u_{\rm{sing}}$ for $\sigma>0$ and $\sigma<0$ together, we conclude that
\begin{equation}
\label{eq:phi0}
\begin{split}
     \widehat{\phi_0}(\sigma)&=\psi(\sigma) \wh\Box(\sigma)^{-1} \wh{f}_0 \in -2ic_0\psi(\sigma)\sigma\log(\sigma+i0)\\ &+\CcI((-1,1)_\sigma;\CI(X^\circ)) +\sum_{\pminus} \A^{2-}(\pminus[0,1)_\sigma;\CI(X^\circ)).
\end{split}
\end{equation}
We now compute $\mathcal{F}^{-1} (\psi(\sigma)\sigma\log(\sigma+i0))(\ts)$. Writing $\log(\sigma+i0) = \log|\sigma| + i\pi H(-\sigma),$
where $H$ is the Heaviside function, we get, through a straightforward computation, 
\[
\begin{split}
    \mathcal{F}^{-1} (\log(\sigma+i0)) (\ts)
    &= -\frac{1}{2} \left( \mathrm{p.v.} \frac{1}{\ts} + \mathrm{f.p.} \frac{1}{|\ts|} \right) + \left( \frac{i\pi}{2} - \gamma \right)\delta(\ts) \\
    &= - \mathrm{f.p.} \frac{1}{(\ts)_+} + \left( \frac{i\pi}{2} - \gamma \right)\delta(\ts).
\end{split}
\]
Here $\mathrm{p.v.}$ denotes the principal value, $\mathrm{f.p.}$ denotes the finite part distribution; $(\ts)_+ = \max(\ts,0)$; $\gamma$ is the Euler's constant. Then
\begin{equation}
\label{IFTsigmalog(sigma+i0)}
\begin{split}
    F(t_*) := \mathcal{F}^{-1} (\sigma\log(\sigma+i0))(\ts) 
    &= i\partial_{\ts} \left(\mathcal{F}^{-1} (\log(\sigma+i0))(\ts)\right) \\
    &= -i \left(\mathrm{f.p.} \frac{1}{(\ts)_+}\right)' - \left( \frac{\pi}{2} + i\gamma \right)\delta'(\ts) .
\end{split} 
\end{equation}
Noting that $F(\ts) \in \mathscr{S}'(\RR_{\ts})$, we can therefore note that $\text{sing supp}\,F = \{0\}$. Moreover, letting $\chi(\ts)\in \CI(\RR_{\ts})$ be identically equal to $0$ when $\ts\leq 1/2$ and identically equal to
$1$ when $\ts\geq 1$, we have, in view of \eqref{IFTsigmalog(sigma+i0)},
\[
    (\chi F)(\ts) = i\ts^{-2} \chi(\ts);\quad \supp (1-\chi)F \subset [0,1]_{\ts}.
\]
Using the convolution theorem, $\mathcal{F}(u\ast v) = \mathcal{F}u \mathcal{F}v$, we can write
\[
\begin{split}
    { }&\quad\ \mathcal{F}^{-1} (\psi(\sigma)\sigma\log(\sigma+i0)) = F \ast \mathcal{F}^{-1}\psi = \chi F \ast \mathcal{F}^{-1}\psi + (1-\chi)F \ast \mathcal{F}^{-1}\psi.
\end{split}
\]
Note that $\psi\in\CcI(\RR_\sigma)$ implies $\mathcal{F}^{-1}\psi\in\mathscr{S}(\RR_{\ts})$, and that $(1-\chi)F$ is a distribution of compact support. It follows that
\[
    (1-\chi)F \ast \mathcal{F}^{-1}\psi \in \mathscr{S}(\RR_{\ts}).
\]
We assert that 
\[
    \chi F - \chi F \ast \mathcal{F}^{-1}\psi \in \mathscr{S}(\RR_{\ts}).
\]
This is equivalent to claim that $(1-\psi)\widehat{\chi F} \in\mathscr{S}(\RR_\sigma)$, i.e. for any $k, m\in\NN$, there exists $C_{km}$ such that
\begin{equation}
\label{bound by Ckm}
	\left|\sigma^m \partial_\sigma^k \big((1-\psi(\sigma))\widehat{\chi F}(\sigma)\big)\right| \leq C_{km} .
\end{equation}
Recalling that $(\chi F)(t_* ) = \chi(t_*) it_*^{-2}$, through integration by parts, we have
\[
	\sigma \widehat{\chi F}(\sigma) = \int_{\RR} \partial_{t_*} (e^{i\sigma t_*}) \chi(t_*) t_*^{-2} d t_* = - \int_{\RR} e^{i\sigma t_*} \partial_{t_*} (\chi(t_*) t_*^{-2}) d t_* .
\]
It follows that for any $k\in\NN$,
\[
	\sigma^k \widehat{\chi F}(\sigma) = i^{k+1} \int_{\RR} e^{i\sigma t_*} \partial_{t_*}^k (\chi(t_*) t_*^{-2}) d t_* \in L^\infty(\RR_\sigma);
\]
here we used the fact that $\partial_{t_*}^k (\chi(t_*) t_*^{-2}) = \mathcal{O}(|t_*|^{-2-k})$. Since 
$$t_*^l \partial_{t_*}^k (\chi(t_*) t_*^{-2}) \in L^1(\RR_{t_*}), \quad l\leq k,$$ 
we can therefore write
\[
	\partial_\sigma^l \big(\sigma^k \widehat{\chi F}(\sigma)\big) = i^{k+l+1} \int_{\RR} e^{i\sigma t_*} t_*^l \partial_{t_*}^k (\chi(t_*) t_*^{-2}) d t_* \in L^\infty(\RR_\sigma).
\]
We recall that $\psi\in\CcI(\RR_\sigma)$ is identically equal to $1$ near $\sigma=0$. Thus, we can define $\sigma^{-k} (1-\psi(\sigma)) \in \CI_b(\RR_\sigma)$ (with all derivatives bounded). The product rule gives
\begin{equation}
\label{eqn:Leibniz schwartz}
	\partial_\sigma^k \big((1-\psi(\sigma))\widehat{\chi F}(\sigma)\big) = \sum_{l=0}^k \binom{k}{l} \partial_\sigma^{k-l}\left(\frac{1-\psi(\sigma)}{\sigma^k}\right) \partial_\sigma^{l} \big(\sigma^k \widehat{\chi F}(\sigma)\big).
\end{equation}
Using integration by parts as above, we can derive that for $m\in \NN$, $l\leq k$,
\begin{equation}
\label{IBP schwartz}
	\sigma^m \partial_\sigma^l \big(\sigma^k \widehat{\chi F}(\sigma)\big) = i^{k+l+m+1} \int_{\RR} e^{i\sigma t_*} \partial_{t_*}^m \big(t_*^l \partial_{t_*}^k (\chi(t_*) t_*^{-2})\big) d t_* \in L^\infty(\RR_\sigma).
\end{equation}
Combining \eqref{eqn:Leibniz schwartz} and \eqref{IBP schwartz}, we obtain \eqref{bound by Ckm}. Thus $(1-\psi)\widehat{\chi F} \in\mathscr{S}(\RR_\sigma)$. We conclude from the above arguments that
\[
    \mathcal{F}^{-1} (\psi(\sigma)\sigma\log(\sigma+i0)) - i \chi(\ts)t_*^{-2} \in \mathscr{S}(\RR_{\ts});
\]
thus, the contribution to $\phi_0$ from the most singular term is $2c_0 t_*^{-2}$.
More generally,  we have
\[
    \calF_{t_*\to\sigma}\inv ( \sigma^{k}\log(\xs+i0)) = d_k t_*^{-k-1}, \quad k \in \N_0, \quad d_k = e^{-i\pi k/2}(-1)^k k!.
\]
We also note that the inverse Fourier transform of the smooth term (namely the  $\CcI((-1,1)_\sigma;\CI(X^\circ))$ part) lies in $\mathscr{S}(\R_{t_*})$. In view of \cite[Lemma 3.6]{Hin22}, either of the conormal terms, which belongs to $\A^{2-}([0,1)_\sigma;\CI(X^\circ))$ or $\A^{2-}((-1,0]_\sigma;\CI(X^\circ))$, has inverse Fourier transform that is bounded by $t_*^{-3+}$ together with all their $t_*\pa_{\ts}$ and $\pa_x$ derivatives. Hence, we conclude that for $x\in K\Subset X^\circ$,
\begin{equation}
\label{phi0 asymp}
    |\pa_{\ts}^j \pa_x^\beta (\phi_0 - 2c_0 t_*^{-2})|\leq C_{j\beta K} t_*^{-3-j+},\quad \forall\, j\in\mathbb{N},\ \beta\in\mathbb{N}^3.
\end{equation}

\subsubsection{The piece $\phi_1$}
We consider $\phi_1$ given in \eqref{eqn:phi012}. Using the Taylor expansion of $\wh{f}(\sigma)$ (in $\sigma$), we obtain
\begin{equation}
\label{phi1 Taylor expansion}
    \frac{\widehat{f}(\sigma)-\widehat{f}(0)}{\sigma} = i\hat{g} - \sigma\int_0^1 (1-s)\int_{\RR_{\ts}} e^{i s\sigma\ts} \ts^2 f(\ts,x)d\ts ds,
\end{equation}
where
\begin{equation}
\label{eqn:hat g}
    \hat{g}:= \int_{\RR_{\ts}} \ts f(\ts,x)d\ts.
\end{equation}
The decay estimate \eqref{tilde phi decay} implies that for each $\epsilon>0$,
\[
	\phi-\rho\phi_{\rm{rad},1} \in \rho^{2-\epsilon}\tau^\epsilon \A^0 ([0,1)_{\rho,\tau}^2 \times \mathbb{S}^2).
\]
We deduce the following from \eqref{phicube explicit} and \eqref{phicube expansion}:
\[
    \ts\phi^3 \in \rho^3 \ts \phi_{\rm{rad},1}^3(\ts,\omega) + \A^{1+\epsilon}([0,1)_\tau;\A^{4-\epsilon}(X)),\quad\tau=\ts^{-1}.
\]
Integrating this in $\ts$, and noting that $\A^{1+\epsilon}([0,1)_\tau)$ corresponds to $\mathcal{O}(\ts^{-1-\epsilon})$ which is in $L^1(\RR_{\ts})$, and that the other terms of $f(\ts,x)$ given in \eqref{the forcing f} are all supported in $0<\ts<1$, we conclude that
\begin{equation}
\label{hat g leading term}
    \hat{g}\in \rho^3 \widetilde{c}(\omega) + \A^{4-}(X),
\end{equation}
where $\widetilde{c}(\omega)$ can be computed explicitly, like \eqref{eqn:c(omega)}. 

We now estimate the remainder of the integral form in \eqref{phi1 Taylor expansion} in an iterated conormal space. Recalling \eqref{phi conormal est}, we obtain that for each $\epsilon>0$,
\[
    \ts^2 \phi^3 \in \A^{1+\epsilon}([0,1)_\tau;\A^{3-\epsilon}(X)),\quad\tau=\ts^{-1},
\]
since $\ts^2 {\la t_*+2r \ra}^{-3} \jts^{-3}$ is bounded by $\jts^{-1-\epsilon} {\la r \ra}^{-3+\epsilon}$. Therefore,
\begin{equation}
\label{integral remainder}
    \int_0^1 (1-s)\int_{\RR_{\ts}} e^{is\sigma\ts} \ts^2 f(\ts,x)d\ts ds \in \A^0(\pminus[0,1)_\sigma;\A^{3-}(X)).
\end{equation}
Repeating the steps to obtain \eqref{eq:phi0}, it follows from \eqref{hat g leading term} that
\begin{equation}
\label{Boxhat of hat g}
\begin{split}
     \psi(\sigma)\wh\Box(\sigma)^{-1} \hat{g} \in & -2i\ti{c}_0\psi(\sigma)\sigma\log(\sigma+i0) + \CcI((-1,1)_\sigma;\CI(X^\circ)) \\
     & + \sum_{\pminus} \A^{2-}(\pminus[0,1)_\sigma;\CI(X^\circ)),
\end{split}
\end{equation}
where $\ti{c}_0 := (4\pi)^{-1}\int_{\S^2} \widetilde{c}(\omega) d\omega.$ 
Recalling from Lemma \ref{lem:aux} that 
\[
\wh\Box(\sigma)^{-1} : \A^0([0,1)_\sigma;\A^{3-}(X)) \to \A^{1-,1-,0}(X_{\rm{res}}^+),
\]
we then conclude from \eqref{phi1 Taylor expansion} and \eqref{integral remainder} that
\begin{equation}
\label{phi1 hat split}
\begin{split}
    \widehat{\phi_1}(\sigma) &= \sigma\psi(\sigma)\wh\Box(\sigma)^{-1}(\sigma^{-1}(\widehat{f}(\sigma)-\widehat{f}(0))) \\
    &\in i\sigma\psi(\sigma)\wh\Box(\sigma)^{-1} \hat{g} + \sigma^2 \A^{1-,1-,0}(X_{\rm{res}}^\pminus). 
\end{split}
\end{equation}
Using \eqref{Boxhat of hat g} we proceed to obtain 
\begin{equation}
\label{phi1 hat compact}
    \widehat{\phi_1}(\sigma)\in \CcI((-1,1)_\sigma;\CI(X^\circ)) + \sum_{\pminus} \A^{2-}(\pminus[0,1)_\sigma;\CI(X^\circ)),
\end{equation}
for which we simply estimated
$	2\ti{c}_0 \psi(\sigma)\sigma^2 \log(\sigma+i0) \in \sum_{\pminus} \A^{2-}(\pminus[0,1)_\sigma;\CI(X^\circ)),$
with the help of the extra factor $\sigma$ comparing to the case of $\phi_0$; we also used the inclusion $\sigma^2 \A^{1-,1-,0}(X_{\rm{res}}^\pminus)\subset \A^{2}(\pminus[0,1)_\sigma;\A^{1-}(X))$.
Arguing as in the steps to obtain \eqref{phi0 asymp}, we can conclude from \eqref{phi1 hat compact} that for $\phi_1$ on spatially compact sets $K\Subset X^\circ$,
\begin{equation}
\label{phi1 asymp}
    |\pa_{\ts}^j \pa_x^\beta \phi_1 (\ts,x) |\leq C_{j\beta K} t_*^{-3-j+},\quad \forall\, j\in\mathbb{N},\ \beta\in\mathbb{N}^3.
\end{equation}

\subsubsection{Finalizing the proof of Theorem \ref{thm:compactmain}}
Combining \eqref{phi0 asymp}, \eqref{phi1 asymp}, and \eqref{phi2 rapid decay}, we obtain the following long time asymptotics for $\phi$ in spatially compact subsets of $K\Subset X^\circ \cong \R^3$,
\begin{equation*}
    |\pa_{\ts}^j \pa_x^\beta (\phi - 2c_0 t_*^{-2})|\leq C_{j\beta K} t_*^{-3-j+},\quad \forall\, j\in\mathbb{N},\ \beta\in\mathbb{N}^3,
\end{equation*}
which completes the proof of \eqref{phi asymp spatial compact}. 

It remains to show that the leading coefficient $c_0$ is independent of the choice of the cutoff function $\chi(\ts)$ in \eqref{forcing problem}. Specifically, we prove the formula \eqref{eq:leadcoefficient}. 
Indeed, we can justify this with a straightforward calculation.

Recall the expression from \eqref{eqn:c(omega)} and \eqref{eqn:coefficient c0}: 
\begin{equation}
c_0 = \frac{1}{4\pi} \int_{\S^2} \int_\RR (\chi(t_*) a_0(\ts,\omega) \phi_{\rm rad,1}^3 - 2\chi' \phi_{\rm{rad},2} - \chi' \ti{g} \pa_{\ts}\phi_{\rm{rad},1}) d\ts d\omega.
\label{eq:c0 again}
\end{equation}

Using the formula \eqref{eq:form of 2nd radi field} for $\phi_{\rm rad,2}$, we compute
\[
\begin{split}
    { }&\quad\chi a_0 \phi_{\rm rad,1}^3 - 2\chi' \phi_{\rm{rad},2} - \chi' \ti{g} \pa_{\ts}\phi_{\rm{rad},1} \\
    &= \chi a_0 \phi_{\rm{rad},1}^3 - (2c_2+\ti{g}d_1)\chi'(\ts)  + \chi'(\ts) \int_0^{\ts} (a_0(s,\omega)\phi_{\rm{rad},1}^3(s,\omega) -\Delta_\omega\phi_{\rm{rad},1}(s,\omega) )\,ds \\
    &= \pa_{\ts} \left( \chi(\ts) \int_0^{\ts} a_0(s,\omega) \phi_{\rm{rad},1}^3(s,\omega) ds \right)  -(2c_2+\ti{g}d_1)\chi'(\ts)  - \chi'(\ts)\int_0^{\ts} \Delta_\omega\phi_{\rm{rad},1}(s,\omega) ds.
\end{split}
\]
We compute directly to see that 
\[
	\int_{\S^2} \Delta_\omega h(\omega) d\omega=0,\quad\forall h\in\CI(\S^2),
\]
and that 
\[
	\left(\chi(\ts) \int_0^{\ts} a_0(s,\omega)\phi_{\rm{rad},1}^3(s,\omega) ds\right)\bigg\lvert_{\ts=-\infty}^{\ts=\infty} = \int_0^\infty a_0(\ts,\omega)\phi_{\rm{rad},1}^3(\ts,\omega) d\ts.
\]
Therefore, we conclude that with $\int_{\R} \chi'(\ts) d\ts = 1$,
\begin{equation}
	c_0 = \frac{1}{4\pi} \int_{\S^2}  \left( \int_0^\infty a_0(\ts,\omega)\phi_{\rm{rad},1}^3(\ts,\omega) d\ts -  (2c_2(\omega)+\ti{g}(\omega)d_1(\omega))\right) d\omega,
\label{eq:c0final}
\end{equation}
which thus completes the proof of Theorem \ref{thm:compactmain}. Thus $c_0$ is independent of the choice of the cutoff $\chi$. 

\section{Global asymptotics}
\label{sec:global asymp}
This section is devoted to the proof of Theorem~\ref{thm:global main_body text}, which is the fully detailed version of Theorem~\ref{thm:global main}. Let us first recall the formal definition of the resolved spacetime manifold $M_+$ from \cite[Definition 3.7]{Hin22}:
\begin{definition}
\label{def:M+}
Let $X=\overline{\RR^3}$ be the radial compactification of $\RR^3$ defined in Definition \ref{DefACompact}. Let $[0,\infty]_{\ts} := ([0,\infty)_{\ts}\sqcup [0,\infty)_\tau)/\sim$ where the equivalence relation $\sim$ identifies $\ts>0$ with $\tau=\ts^{-1}$. The resolved spacetime (for positive $\ts$) is the blow-up
\[
    M_+ := [[0,\infty]_{\ts}\times X ; \{\infty\}_{\ts}\times\partial X],
\]
where we blow up the corner $\{\infty\}_{\ts}\times\partial X = \{\rho=\tau=0\}$, as in \cite[Definition 3.7]{Hin22} or \cite[Definition 2.12]{Hin22} and introduce a new variable $v:=\rho/\tau$ and the front face $I^+ := [0,\infty]_v \times\partial X$, which is the Minkowski future timelike infinity. Moreover, we have null infinity $\mathscr{I}^+$ as the lift of $[0,\infty]_{\ts}\times\partial X$ and the spatially compact future infinity $\mathcal{K}^+$ is the lift of $\{\infty\}_{\ts}\times X$. Let $\rho_{\mathscr{I}^+},\,\rho_{I^+},\,\rho_{\mathcal{K}^+} \in \CI(M_+)$ be the defining functions of the boundary hypersurfaces $\mathscr{I}^+,\,I^+,\,\mathcal{K}^+$, respectively. Using the same notation as in \cite[Definition 3.7]{Hin22}, let us set, away from $\mathscr{I}^+$, $\rho_{\mathcal{K}^+} = \rho_{\mathcal{K}^+,0} := \tau/\rho$, $\rho_{I^+} = \rho_{I^+,0} := \rho$; away from $\mathcal{K}^+$, $\rho_{\mathscr{I}^+} = \rho_{\mathscr{I}^+,1} := \rho/\tau$, $\rho_{{I}^+} = \rho_{{I}^+,1} := \tau$.
\end{definition}

\noindent
We recall the definition of the conormal spaces on $M_+$ from \cite[Definition 3.8]{Hin22}:
\begin{definition}
\label{def:conormal on M+}
The conormal space $\A^{0,0,0}(M_+)$ consists of functions that are smooth at $\Sigma=\{\ts=0\}$ and remain bounded after application of any finite number of b-vector fields on $M_+$ (tangent to the boundary hypersurfaces $\mathscr{I}^+,\,I^+,\,\mathcal{K}^+$), for example, $V=\rho_{{I}^+}\partial_{\rho_{{I}^+}}$ near $I^+$. Let $\A^{\alpha, \beta, \gamma}(M_+) = \rho_{\mathscr{I}^+}^\alpha \rho_{I^+}^\beta \rho_{\mathcal{K}^+}^\gamma \A^{0,0,0}(M_+)$. The polyhomogeneous conormal space 
\[
\A^{(\cE_{\mathscr{I}^+},\alpha_{\mathscr{I}^+}),(\cE_{{I}^+},\alpha_{{I}^+}), (\cE_{\mathcal{K}^+},\alpha_{\mathcal{K}^+})} (M_+),
\]
where $\cE_{\mathscr{I}^+},\,\cE_{{I}^+}\, \cE_{\mathcal{K}^+}$ are index sets, consists of all $u$ which have partial expansions at boundary hypersurfaces ${\mathscr{I}^+},\,{{I}^+},\,{\mathcal{K}^+}$. See Definition \ref{defn:partial expansion} and \cite[Definition 2.13]{Hin22} for a more detailed introduction.
\end{definition}

\begin{theorem}
\label{thm:global main_body text}
Let $\phi$ be the unique forward solution to the initial value problem \eqref{I.V. problem}, with initial data satisfying \eqref{initial data u0, u1} and conormal bounds \eqref{phi conormal est}. Then in terms of index sets which characterize the leading term and the decay rate of the remainder term on each boundary hypersurface of the spacetime $M_+$ ((see Figure~\ref{fig:M+} for a picture of $M_+$)), we have
\[
    \phi \in \A^{((1,0),2),((2,0),3-),((2,0),3-)}(M_+).
\]
Here, the first index set describes the partial expansion (see Definition \ref{defn:partial expansion}) at the boundary hypersurface $\mathscr{I^+}$, while the second and the third index sets correspond to boundary hypersurfaces $I^+$ and $\mathcal{K}^+$, respectively. Moreover, the leading terms of $\phi$ on the different boundary hypersurfaces are:
\begin{enumerate}
    \item On $\calK^+$, $\phi \sim 2 c_0 t_*^{-2}$ with $c_0$ given in \eqref{eq:leadcoefficient}.
    
    \item On $I^+$, $\phi \sim 2c_0 \ts^{-2} \dfrac{\rho \ts}{\rho \ts + 2}$, with $v:=\rho\ts$ being a coordinate on $I^+$, as $I^+\cong (0,\infty)_v \times\partial X$.
    
    \item On $\mathscr{I}^+$, 
    $\phi \sim\rho \phi_{{\rm rad},1}$, where $\phi_{{\rm rad},1}$ is the first order radiation field given in Lemma \ref{lem:rad field small time} that satisfies \eqref{eq:form of first radi}, which shows that the leading term of $\phi_{{\rm rad},1}$ is  $c_0 t_*^{-1}$ and therefore matches up with the leading order term at $I^+$. 
\end{enumerate}
\end{theorem}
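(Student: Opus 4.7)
The plan is to globalize the Fourier-analytic argument of Section 4, keeping track of behavior on all three asymptotic faces of $M_+$ rather than only on spatially compact subsets. The key new ingredient is a Fourier--conormal translation lemma (extending \cite[Lemma 3.6]{Hin22} and the computation in \cite[Theorem 3.9]{Hin22}) sending polyhomogeneous conormal structure on the resolved frequency space $X^+_{\rm res}$ to polyhomogeneous conormal structure on $M_+$ via inverse Fourier transform $\sigma \leftrightarrow t_*$. Morally, $\zface$ corresponds to $\mathcal{K}^+$ (small $\sigma$ with $\rho$ bounded below, i.e.\ large $t_*$), $\tface$ corresponds to $I^+$ (the transition $\sigma r \sim 1$, equivalently $r \sim t_*$), and $\bface$ corresponds to $\mathscr{I}^+$ (large $r$).

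Using the splitting $\phi_\chi = \phi_0 + \phi_1 + \phi_2$ from \eqref{eqn:phi012}, I would first dispose of $\phi_2$ and $\phi_1$: the high-frequency piece $\phi_2$ lies in $\mathscr{S}(\RR_{t_*};\A^{1-}(X))$ by \eqref{phi2 rapid decay} and is absorbed into the remainders on all three faces; the intermediate term $\phi_1$ carries an extra factor of $\sigma$ in Fourier space compared with $\widehat{\phi_0}$ (see \eqref{phi1 hat split}), hence decays one power faster in $t_*$ and lies in the $\A^{(\cdot,3-),(\cdot,3-)}$ remainder at $I^+$ and $\mathcal{K}^+$, contributing only subleading $\rho$-behavior at $\mathscr{I}^+$. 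The non-cutoff portion $(1-\chi(t_*))\phi$, being compactly supported in $t_*$, is smooth on $M_+$ and is also absorbed.

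The substantive computation is for $\phi_0 = \mathcal{F}^{-1}[\psi(\sigma)\wh\Box(\sigma)^{-1}\wh f_0]$, using the refined resolvent expansion \eqref{decompose u sing and reg}--\eqref{u regular}. The regular piece $u_{\rm reg}(\sigma)$ is smooth in $\sigma$ near zero with values in polyhomogeneous conormal spaces on $X$, and its inverse Fourier transform combines with the radiation-field analysis of Lemma \ref{lem:rad field small time}: the $\A^{((1,0),2-)}$ part of $\wh\Box(0)^{-1}\wh f_0$ from \eqref{eqn:expansion of u} contributes to the $\rho \phi_{\rm rad,1}(t_*,\omega)$ leading term at $\mathscr{I}^+$. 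The dominant contribution on $I^+$ and $\mathcal{K}^+$ comes from the singular piece $u_{\rm sing}(\sigma)\in\sigma\A^{1-,((0,0),1-),((0,1),1-)}(X^+_{\rm res})$, whose leading term at $\tface$ is $2ic_0\sigma\tilde u_{\rm mod}(\sigma/\rho)$ and at $\zface$ is $-2ic_0\sigma\log(\sigma/\rho)$ by Proposition \ref{Hintz Lemma 2.24}. Three explicit inversions then extract the stated leading terms. First, on $\mathcal{K}^+$, the formula \eqref{IFTsigmalog(sigma+i0)} gives $\mathcal{F}^{-1}[-2ic_0\psi(\sigma)\sigma\log(\sigma+i0)] \sim 2c_0 t_*^{-2}$, recovering the leading term of Theorem \ref{thm:compactmain}. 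Second, on $I^+$, the scaling symmetry of $\tilde u_{\rm mod}(\sigma/\rho)$ together with $\sigma = -i\partial_{t_*}$ under $\mathcal{F}^{-1}$ produces an expression of the form $t_*^{-2} F(\rho t_*)$ with $v = \rho t_*$ a coordinate on $I^+$; the ODE characterization of $\tilde u_{\rm mod}$ from Lemma \ref{lem:2.23Hintz} identifies $F(v) = v/(v+2)$, yielding the stated leading term $2c_0 t_*^{-2}\cdot v/(v+2)$. Third, on $\mathscr{I}^+$ the leading term $\rho\phi_{\rm rad,1}$ comes directly from Lemma \ref{lem:rad field small time}, and combining Lemma \ref{lem:rad field large time} with the $\mathcal{K}^+$ computation forces the matching $\phi_{\rm rad,1}(t_*,\omega) - c_0 t_*^{-1} \in \mathcal{O}(t_*^{-2+})$ at the corner $\mathscr{I}^+ \cap I^+$.

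The main obstacle is the Fourier--conormal translation itself: showing that the $\A^{1-,2-,2-}(X^+_{\rm res})$ remainder from \eqref{u regular}, together with the remainders from $\phi_1$ and $\phi_2$, pull back under $\mathcal{F}^{-1}$ to functions lying in the asserted remainder spaces on $M_+$. This requires careful contour-shifting and integration-by-parts arguments distinguishing the regimes $r\lesssim t_*$, $r\sim t_*$, and $r\gg t_*$, and is essentially the polyhomogeneous adaptation of \cite[Lemma 3.6]{Hin22} to inputs living on the full resolved space rather than merely on compact spatial sets. A secondary technical point is the bookkeeping of the matching of leading terms at the corners $\mathcal{K}^+\cap I^+$ (where $v\to\infty$, so $v/(v+2)\to 1$, matching $2c_0 t_*^{-2}$) and $I^+\cap\mathscr{I}^+$ (where $v\to 0$, so $v/(v+2)\sim v/2$, matching $c_0\rho t_*^{-1}$); this is guaranteed by the precision of the refined expansion \eqref{eq:better} but needs to be verified term by term against the index sets $((1,0),2)$ at $\mathscr{I}^+$ and $((2,0),3-)$ at $I^+$ and $\mathcal{K}^+$.
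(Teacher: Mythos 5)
Your proposal is correct and follows essentially the same route as the paper's proof: the splitting $\phi_\chi=\phi_0+\phi_1+\phi_2$, absorption of $\phi_2$ and $\phi_1$ into the remainders, inverse Fourier transform of the singular resolvent piece to produce $2c_0 t_*^{-2}\rho t_*/(\rho t_*+2)$ at $I^+$, and corner matching at $I^+\cap\mathscr{I}^+$ to pin down the leading term of $\phi_{\rm rad,1}$. The only place you gloss over detail is the identification $F(v)=v/(v+2)$, which in the paper comes not from the ODE characterization of $\tilde u_{\rm mod}$ alone but from the explicit integral representation $\pa_{\hat r}(\hat r\tilde u_{\rm mod}(\hat r))=\int_0^\infty e^{-2t\hat r}(t-i)^{-1}dt$ followed by a direct contour computation.
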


We shall build on the spatially compact results, i.e. the asymptotic near $\mathcal{K}^+$ to analyze the behavior of $\phi$ near the other boundary hypersurfaces at ``infinity" of $M_+$. The resolvent expansion \eqref{decompose u sing and reg}--\eqref{u regular} is used to get a precise asymptotic expansion near $I^+$. For the behavior near null infinity $\mathscr{I}^+$, we revisit the radiation field expansion and then combine it with the asymptotic we obtained near the front face $I^+$. The method gives a unified picture of asymptotics in all regions.

\bigskip
\noindent
{\textit{Proof of Theorem \ref{thm:global main_body text}}.} We employ the splitting in \eqref{eqn:phi012}:
\[
    \phi_\chi = \phi_0 + \phi_1 + \phi_2.
\]
It follows from \eqref{phi2 rapid decay} that
\begin{equation}
\label{phi2 in conormal M+}
    \phi_2 \in \A^{1-,\infty,\infty}(M_+).
\end{equation}
As $\widehat{\phi_1}(\sigma)$ has an extra factor of $\sigma$ than $\widehat{\phi_0}(\sigma)$, we first consider $\phi_0$. We recall \eqref{decompose u sing and reg} that for $\sigma>0$,
\[
    \wh{\phi_0}(\sigma) = \psi(\sigma)\wh\Box(\sigma)^{-1}\wh{f}_0 = \psi(\sigma)u_{\rm{sing}}(\sigma) + \psi(\sigma)u_{\rm{reg}}(\sigma).
\]
Since $\wh{f}_0=\wh{f}(0,x)$ is real-valued, we have for $\sigma<0$:
\begin{equation}
\label{resolvent negative sigma}
    \wh\Box(\sigma)^{-1}\wh{f}_0 = \overline{\wh\Box(-\sigma)^{-1}\wh{f}_0}.
\end{equation}
It follows that the smooth terms of $u_{\rm{reg}}$ in \eqref{u regular} for $\sigma \gtrless 0$ can be glued together to be smooth on $(-1,1)_\sigma$,
\[
    \wh\Box(0)^{-1} \wh{f}_0 + i\sigma \wh\Box(0)^{-1}\ti{f_1} \in\CI ((-1,1)_\sigma; \A^{1-}(X)),
\]
in view of the following identity,
\[
	\overline{\wh\Box(0)^{-1} \wh{f}_0 + i(-\sigma)\wh\Box(0)^{-1}\ti{f_1}} = \wh\Box(0)^{-1} \wh{f}_0 + i\sigma \wh\Box(0)^{-1}\ti{f_1}.
\]
Therefore, the corresponding contribution to $\phi_0$ satisfies
\begin{equation}
\label{phi0 reg 1}
    \phi_{0,\rm{reg},1} := \mathcal{F}^{-1} (\psi(\sigma)(\wh\Box(0)^{-1} \wh{f}_0 + i\sigma \wh\Box(0)^{-1}\ti{f_1})) \in \A^{1-,\infty,\infty}(M_+).
\end{equation}
The remaining terms of $u_{\rm{reg}}$ in \eqref{u regular} for $\sigma\gtrless 0$ can be written as
\[
    u_{\rm{reg},2} = \sum_{\pminus} u_{\rm{reg},2,\pminus}, \quad u_{\rm{reg},2,\pminus} \in \A^{2-} (\pminus[0,1)_\sigma ; \A^0(X)).
\]
We recall \cite[Lemma 3.6]{Hin22} to obtain 
\begin{equation}
\label{IFT conormal}
    \mathcal{F}^{-1} : \A^{2-} (\pminus[0,1)_\sigma ; \A^0(X)) \to \A^{3-}([0,1)_\tau;\A^0(X)), \ \ \tau = t_*^{-1}.
\end{equation}
That lemma computes the inverse Fourier transform of a function $\hat g$ with $\supp \hat g \subset [0,1)$ enjoying conormal properties in $\sigma$, and shows that $\mathcal{F}^{-1} \hat g$ has decay in time. It follows that
\begin{equation}
\label{phi0 reg 2}
    \phi_{0,\rm{reg},2} := \mathcal{F}^{-1}(\psi u_{\rm{reg},2}) \in \A^{3-}([0,1)_\tau;\A^0(X)) \subset \A^{0,3-,3-}(M_+).
\end{equation}

We now consider the main contribution to $\phi_0$, which comes from the singular part $u_{\rm{sing}}(\sigma)$. We first recall from Proposition \ref{Hintz Lemma 2.24} the computation of $\hat\Box(\sigma)^{-1}\rho^2$. It follows from \eqref{eq:better} that for $\sigma>0$,
\[
    u_{\rm{sing}}(\sigma) = 2ic_0 \sigma \wh\Box(\sigma)^{-1}\rho^2 \in 2ic_0\sigma \ti u_0(x) + 2ic_0\sigma \ti u_{\rm mod}(\xs/\rho) + \A^{1-,2-,2-}(X_{\rm{res}}^+).
\]
The equation \eqref{resolvent negative sigma} then implies that for $\sigma<0$,
\[
    u_{\rm{sing}}(\sigma) \in 2ic_0\sigma \ti u_0 + 2ic_0\sigma \overline{ \ti u_{\rm mod}(-\sigma/\rho)} + \A^{1-,2-,2-}(X_{\rm{res}}^-).
\]
With the help of the Heaviside function $H(x)$, we combine the pieces for $\sigma\gtrless 0$ as follows:
\[
    u_{\rm{sing}} = u_{\rm{sing},0} + u_{\rm{sing},1} + u_{\rm{sing},2},
\]
\begin{equation}
\label{u sing 0}
	u_{\rm{sing},0} = 2ic_0 \sigma \ti u_0 (x) \in \CI(\RR_\sigma;\A^{1-}(X;\RR)),
\end{equation}
\begin{equation}
\label{u sing 1}
    u_{\rm{sing},1} = 2ic_0 \sigma ( \ti u_{\rm mod}(\sigma/\rho) H(\sigma/\rho) + \overline{ \ti u_{\rm mod}(-\sigma/\rho)} H(-\sigma/\rho) ),
\end{equation}
\begin{equation}
\label{u sing 2}
    u_{\rm{sing},2} = \sum_{\pminus} u_{\rm{sing},2,\pminus}, \quad u_{\rm{sing},2,\pminus} \in \A^{2-} (\pminus[0,1)_\sigma ; \A^0(X)).
\end{equation}
In analogy with \eqref{phi0 reg 1}, by \eqref{u sing 0} we have
\begin{equation}
\label{phi0 sing 0}
	\phi_{0,\rm{sing},0} := \mathcal{F}^{-1}(\psi u_{\rm{sing},0}) \in \A^{1-,\infty,\infty}(M_+).
\end{equation}
Arguing as in \eqref{IFT conormal} and \eqref{phi0 reg 2}, we obtain from \eqref{u sing 2} that
\begin{equation}
\label{phi0 sing 2}
    \phi_{0,\rm{sing},2} := \mathcal{F}^{-1}(\psi u_{\rm{sing},2}) \in \A^{0,3-,3-}(M_+).
\end{equation}
We now compute the leading term of $\phi$ at $I^+$:
\begin{equation}
\label{phi0 sing 1}
        \phi_{0,\rm{sing},1} := \frac{1}{2\pi}\int_{\RR}e^{-i\sigma t_*}\psi(\sigma)u_{\rm{sing},1}(\sigma)\,d\sigma.
\end{equation}
Writing $\sigma = \rho\Hat{r}$, using \eqref{u sing 1} we have
\[
    \phi_{0,\rm{sing},1} = \frac{ic_0\rho^2}{\pi}\int_{\RR} e^{-i\Hat{r}\rho t_*}\Hat{r}\psi(\rho\hat{r})(\ti u_{\rm mod}(\hat{r}) H(\hat{r}) + \overline{ \ti u_{\rm mod}(-\hat{r})} H(-\hat{r})) d\hat{r}.
\]
We recall that $\psi\in \CcI((-1,1)_\sigma)$ is an even function. Then,
\[
\begin{split}
    \phi_{0,\rm{sing},1} &= \frac{ic_0\rho^2}{\pi}\int_0^\infty \Hat{r}\psi(\rho\hat{r})(e^{-i\Hat{r}\rho t_*}\ti u_{\rm mod}(\hat{r}) - e^{i\Hat{r}\rho t_*}\overline{ \ti u_{\rm mod}(-\hat{r})}) d\hat{r} \\
    &= -\frac{2c_0\rho^2}{\pi}\Im \int_0^\infty \Hat{r}\psi(\rho\hat{r})e^{-i\Hat{r}\rho t_*}\ti u_{\rm mod}(\hat{r}) d\hat{r}.
\end{split}
\]
Integrating by parts using $e^{-i\Hat{r}\rho t_*}=i(\rho t_*)^{-1}\pa_{\hat{r}}e^{-i\Hat{r}\rho t_*}$, we obtain
\[
\begin{split}
    \phi_{0,\rm{sing},1} &= -\frac{2c_0\rho^2}{\pi}\Im\left( \frac{-i}{\rho t_*} \int_0^\infty e^{-i\Hat{r}\rho t_*} \pa_{\hat{r}} (\psi(\rho\hat{r})\hat{r}\ti u_{\rm mod}(\hat{r})) d\hat{r} \right) \\
    &= \frac{2c_0\rho t_*^{-1}}{\pi} \Re \int_0^\infty e^{-i\Hat{r}\rho t_*} \pa_{\hat{r}} (\psi(\rho\hat{r})\hat{r}\ti u_{\rm mod}(\hat{r})) d\hat{r} \\
    &= \phi_{0,\rm{sing},1}^\text{I} + \phi_{0,\rm{sing},1}^\text{II}
\end{split}
\]
where
\[
    \phi_{0,\rm{sing},1}^\text{I} := \frac{2c_0\rho t_*^{-1}}{\pi} \Re \int_0^\infty e^{-i\Hat{r}\rho t_*} \psi(\rho\hat{r})\pa_{\hat{r}} (\hat{r}\ti u_{\rm mod}(\hat{r})) d\hat{r},
\]
\[
    \phi_{0,\rm{sing},1}^\text{II} := \frac{2c_0\rho t_*^{-1}}{\pi} \Re\int_0^\infty e^{-i\Hat{r}\rho t_*}\psi'(\rho\hat{r})\rho\hat{r}\ti u_{\rm mod}(\hat{r}) d\hat{r}.
\]
We may assume that $\supp\psi' \cap (0,\infty) \subset (\delta,1)$ for some $\delta>0$. Letting $\rho\hat{r} = \rho$, assuming now without loss of generality that $\psi' = \mathbf{1}_{(0,\infty)} \psi'$, we derive that
\[
    \phi_{0,\rm{sing},1}^\text{II} =  \Re\frac{2c_0 t_*^{-1}}{\pi}\int_{\mathbb{R}} e^{-i\sigma t_*}\psi'(\sigma)\sigma\ti u_{\rm mod}(\sigma/\rho) d\sigma.
\]
It follows from \cite[Lemma 2.23]{Hin22} that $\psi'(\sigma)\sigma\ti u_{\rm mod} \in \CcI(\RR_{\sigma}; \A^{1-}(X))$. Therefore,
\begin{equation}
\label{phi0sing1 II}
    \phi_{0,\rm{sing},1}^\text{II} =  \Re\frac{2c_0 t_*^{-1}}{\pi} \mathcal{F}^{-1} (\psi'(\sigma)\sigma\ti u_{\rm mod}) \in \A^{1-,\infty,\infty} (M_+).
\end{equation}
We recall from \cite[Remark 2.25]{Hin22} that
\[
    \pa_{\hat{r}} (\hat{r}\ti u_{\rm mod}(\hat{r})) = \int_0^\infty e^{-2t\hat{r}}(t-i)^{-1}dt.
\]
It follows that
\[
    \phi_{0,\rm{sing},1}^\text{I} = \frac{2c_0\rho t_*^{-1}}{\pi} \Re \int_0^\infty \left(\int_0^\infty e^{-\hat{r}(2t+i\rho t_*)}\psi(\rho\hat{r})d\hat{r}\right) (t-i)^{-1}dt.
\]
Applying integration by parts to the inner integral, we have
\begin{equation}
\label{phi0sing1 main}
    \phi_{0,\rm{sing},1}^\text{I} = \frac{2c_0\rho t_*^{-1}}{\pi} \Re \int_0^\infty \frac{dt}{(2t+i\rho t_*)(t-i)} + \ti{\phi}_{0,\rm{sing},1}^\text{I},
\end{equation}
where
\[
    \ti{\phi}_{0,\rm{sing},1}^\text{I} := \frac{2c_0\rho^2 t_*^{-1}}{\pi} \Re \int_0^\infty e^{-i\hat{r}\rho t_*}\psi'(\rho\hat{r}) \int_0^\infty \frac{e^{-2t\hat{r}} dt}{(2t+i\rho t_*)(t-i)} d\hat{r}.
\]
We set
\[
    g(\sigma,\rho;t_*) := \int_0^\infty \frac{e^{-2t\hat{r}} dt}{(2t+i\rho t_*)(t-i)} = \int_0^\infty \frac{e^{-2t\sigma}dt}{(2t+it_*)(\rho t - i)}.
\]
Then, we have, with $\supp\psi'\subset (\delta,1)_\sigma$,
\[
    \ti{\phi}_{0,\rm{sing},1}^\text{I} = \frac{2c_0 \rho t_*^{-1}}{\pi} \Re \int_{\R} e^{-i\sigma t_*}\psi'(\sigma) g d\sigma = \frac{2c_0\rho t_*^{-1}}{\pi} \Re \mathcal{F}^{-1}(\psi'(\sigma)g).
\]
We note that for $\delta<\sigma<1$, $g=g(\sigma,\rho;t_*)$ extends smoothly to $\{\rho=0\}$, thus $\psi'(\sigma) g \in \CcI(\RR_{\sigma}; \CI(X))$. It follows that
\begin{equation}
\label{ti phi0sing1}
    \ti{\phi}_{0,\rm{sing},1}^\text{I} \in \rho \A^{0,\infty,\infty}(M_+)\subset \A^{1,\infty,\infty}(M_+).
\end{equation}
It remains to compute the integral in \eqref{phi0sing1 main}:
\[
    \int_0^\infty \frac{dt}{(2t+i\rho t_*)(t-i)} = \frac{i}{\rho t_* + 2}(\log 2 - \log(\rho t_*) - i\pi),
\]
thus we conclude from \eqref{phi0sing1 main} and \eqref{ti phi0sing1} that
\begin{equation}
\label{phi0sing1 I}
    \phi_{0,\rm{sing},1}^\text{I} \in 2c_0 t_*^{-2} \frac{\rho t_*}{\rho t_* + 2} + \A^{1,\infty,\infty}(M_+).
\end{equation}
Combining \eqref{phi0sing1 II} and \eqref{phi0sing1 I} we obtain 
\begin{equation}
\label{phi0sing1 final}
    \phi_{0,\rm{sing},1} \in 2c_0 t_*^{-2} \frac{\rho t_*}{\rho t_* + 2} + \A^{1-,\infty,\infty}(M_+).
\end{equation}
In view of this together with \eqref{phi0 reg 1}, \eqref{phi0 reg 2}, \eqref{phi0 sing 0} and \eqref{phi0 sing 2}, we conclude
\begin{equation}
\label{phi0globalasymp}
    \phi_0 \in \A^{0,((2,0),3-),((2,0),3-)}(M_+),
\end{equation}
where the leading order term at $\mathcal{K}^+$ is $2c_0 t_*^{-2}$ by \eqref{phi0 asymp}, the leading order term at $I^+$ is $2c_0 t_*^{-2} \dfrac{\rho t_*}{\rho t_* + 2}.$

It remains to consider $\phi_1$. We recall, from \eqref{phi1 hat split}, that
\[
    \widehat{\phi_1}(\sigma) \in i\sigma\psi(\sigma)\wh\Box(\sigma)^{-1} \hat{g} + \sum_\pminus \A^{2}(\pminus[0,1)_\sigma;\A^{1-}(X))
\]
Using \cite[Lemma 3.6]{Hin22} and following the argument to obtain \eqref{phi0 reg 2}, we see that
\begin{equation}
\label{phi1 asymp prep}
    \phi_1 \in \mathcal{F}^{-1}(i\sigma\psi(\sigma)\wh\Box(\sigma)^{-1} \hat{g}) + \A^{1-,3,3}(M_+).
\end{equation}
In view of \eqref{hat g leading term} and \eqref{eqn:fhat}, we can repeat the analysis of $\phi_0 = \mathcal{F}^{-1}(\psi(\sigma)\wh\Box(\sigma)^{-1} \hat{f})$ to obtain, similarly as in \eqref{phi0globalasymp},
\begin{equation}
\label{IFT psiBoxinvhatg}
    \mathcal{F}^{-1}(\psi(\sigma)\wh\Box(\sigma)^{-1} \hat{g}) \in \A^{0,((2,0),3-),((2,0),3-)}(M_+).
\end{equation}
We compute
\[
    \mathcal{F}^{-1}(i\sigma\psi(\sigma)\wh\Box(\sigma)^{-1} \hat{g}) = -t_*^{-1} (\ts \pa_{\ts})\mathcal{F}^{-1}(\psi(\sigma)\wh\Box(\sigma)^{-1} \hat{g}).
\]
Note that $\ts \pa_{\ts} = \rho_{\mathscr{I}^+,1}\pa_{\rho_{\mathscr{I}^+,1}} - \rho_{I^+,1}\pa_{\rho_{I^+,1}} = - \rho_{\mathcal{K}^+,0}\pa_{\rho_{\mathcal{K}^+,0}}$ is a b-vector field near boundaries $\mathscr{I}^+$, $I^+$, $\mathcal{K}^+$. Also note that $t_*^{-1} = \rho_{I^+,1} = \rho_{I^+,0}\rho_{\mathcal{K}^+,0}$. Thus, we obtain from \eqref{IFT psiBoxinvhatg}
\[
    \mathcal{F}^{-1}(i\sigma\psi(\sigma)\wh\Box(\sigma)^{-1} \hat{g}) \in \A^{0,3,3}(M_+).
\]
This, with \eqref{phi1 asymp prep}, shows that
\begin{equation}
\label{phi1globalasymp}
    \phi_1 \in \A^{0,3,3}(M_+).
\end{equation}
Combining \eqref{phi0globalasymp}, \eqref{phi1globalasymp} and \eqref{phi2 in conormal M+}, we conclude that
\begin{equation}
\label{globalasymp0}
    \phi \in \A^{0,((2,0),3-),((2,0),3-)}(M_+),
\end{equation}
where the leading order term at $\mathcal{K}^+$ is $2c_0 t_*^{-2}$ in view of \eqref{phi0 asymp}, and the leading order term at $I^+$ is $2c_0 t_*^{-2} \dfrac{\rho t_*}{\rho t_* + 2}.$

We revisit the proof of Lemma \ref{lem:rad field large time} to derive a more precise description of the radiation field $\phi_{\rm{rad},1}$ than given in \eqref{radiation field 1}. Using the same notations for $R$, $M_0'$, etc, as in Lemma \ref{lem:rad field large time}, we recall that
\[
	v = \rho_{\mathscr{I}^+,1} = \rho\ts,\quad \tau = \rho_{I^+,1} = \ts^{-1}.
\] 
\eqref{globalasymp0} implies that $\phi\in \A^{0,((2,0),3-)}(M_0')$. Since, by \eqref{eqn:phi with v leading}, $\phi = vg_1(\tau,\xo) + \A^{2,2}(T)$, where $T:= M_0'$ is a triangle, and since $\phi \in \A^{0,((2,0),3-)}(T)$, we must have 
\begin{equation}
\label{final phi in M0'}
	\phi \in \A^{((1,0),2),((2,0),3-)}(T).
\end{equation}
This improves \eqref{eqn:phi with v leading} in the sense that
\[
	\phi - v g_1(\tau,\omega)\in \A^{2,((2,0),3-)}(M_0'),\quad g_1\in\CI(\S^2;\A^{((2,0),3-)}([0,1)_\tau)).
\] This statement is immediately implied by \eqref{final phi in M0'}.
Recalling \eqref{1=t*g1}, we then obtain
\[
	\phi_{\rm{rad},1} \in \CI(\S^2;\A^{((1,0),2-)}([0,1)_\tau)).
\]
In other words, the radiation field $\phi_{\rm{rad},1}$ expands with a leading term near $\tau=0$,
\[
	\phi_{\rm{rad},1} - c_{\rm{rad},1}(\omega) \tau \in \A^{2-}([0,1)_\tau),\quad c_{\rm{rad},1}\in\CI(\S^2).
\]
Furthermore, we note that the leading order terms at $I^+ \cap \mathscr{I}^+= \{v=\tau=0\}$ should match, which are $2c_0 \rho\tau (v + 2)^{-1}$ from $I^+$ and $\rho\tau c_{\rm{rad},1}(\omega)$ from $\mathscr{I}^+$.
It follows that $c_{\rm{rad},1}(\omega)$ is a constant function and equals $c_0$. We conclude that 
\begin{equation}\label{eq:form of first radi}
    \phi_{\rm{rad},1} - c_0\tau \in \A^{2-}([0,1)_\tau),\quad\textrm{with $c_0$ given by \eqref{eq:leadcoefficient}}.
\end{equation}
This completes the proof of Theorem \ref{thm:global main_body text}. 
\qed

\begin{corollary}
The expansion \eqref{eq:form of first radi} for the first radiation field $\phi_{\rm{rad},1}$ implies the identity:\[
    \begin{split}
    \forall\,\omega\in\S^2&,\quad  \lim_{\ts\to\infty} \ts\phi_{\rm{rad},1}(\ts,\omega) = \frac{1}{4\pi} \int_{\S^2} \left(\int_0^\infty a_0(\ts,\tilde{\omega})\phi_{\rm{rad},1}^3(\ts,\tilde{\omega}) d\ts -  2c_2(\tilde{\omega}) -\ti{g}(\tilde{\omega})d_1(\tilde{\omega})\right) \,d\ti\omega,
\end{split}
\]
where both sides of the identity are equal to the leading coefficient $c_0$. 
\end{corollary}

\section{Price's Law for semi-linear wave equations}

In this section we consider semilinear wave equations with nonlinearities of the form $b(t_*,x)\phi^p$, $p\geq 4$, of which power-type nonlinearities $\pminus \phi^p$ are a special case, and prove Theorem \ref{thm:highorder}. 
We adapt the methods from the cubic case to higher-power nonlinearities and use a refined resolvent expansion that allows us to handle the modified asymptotic behavior. In particular, we carefully analyze how the structure of the nonlinearity affects the leading-order terms. There is a transition at $p=4$ between ``low power" and ``high power" behavior. The proof shows how the nonlinearity modifies the classical Price's law and also provides a systematic method for computing the explicit constant in the leading-order term, a level of precision not attainable by all approaches.

Let $\phi$ be the solution to the following initial value problem:
\begin{equation}
\label{initial value problem Sec 6}
\Box_g \phi = b(\ts,x)\phi^p,\quad \phi(0,x) = u_0(x),\quad \partial_{t_*}\phi(0,x) = u_1(x), 
\end{equation}
with smooth and compactly supported initial data:
\begin{equation}
\label{initial data finer assumption}
\begin{gathered}
    u_0, \,u_1 \in \CcI(\RR^3),\quad\text{thus }u_0,\,u_1\in\A^\alpha(X),\ \forall\alpha>0.
\end{gathered}
\end{equation}
In \eqref{initial value problem Sec 6} we assume that $b\in\CI(\RR_{\ts};\CI(X))$ and that the estimate \eqref{a(t*,x) estimate} holds with $b$ replacing $a$ there, i.e.
\begin{equation}
\label{b(t*,x) estimate}
    |\pa_{\ts}^k \mathcal{V}_X^I b(\ts,x)|\leq C_{k, I} \la \ts \ra^{-k},\quad\forall k,I,
\end{equation}
where $\mathcal{V}_X\in \mathrm{span}\{\pa_{r}, \pa_\rho, \Omega_1,\Omega_2,\Omega_3\}$, $k\in\NN$, and $I$ is a multi-index of any order.

\medskip
\noindent
It follows from Theorem \ref{thm:power_nonlinearity} that the solution to \eqref{initial value problem Sec 6} with $p\geq 4$ satisfies the estimate
\begin{equation}
\label{phi conormal est high order}
	|\pa_{\ts}^k \Gamma_b^I \phi(\ts,x)| \leq C_{k I} \la\ts + 2 |x|\ra^{-1} \la \ts \ra^{-2-k}.
\end{equation}
We first extend the radiation field expansion in Lemma \ref{lem:rad field small time} to the third order. We begin with the case $p=4$. We use Taylor expansion to write
\begin{equation}
\label{eqn:g00 Sec 6}
    g^{00} - \rho^2 \ti g(\omega) - \rho^3 g_3(\omega) \in \A^4(X),\quad x=\rho^{-1}\omega,\ \;\omega\in\mathbb{S}^2.
\end{equation}
For the coefficient $b(\ts,x)$ of the nonlinearity, we write as in \eqref{eqn:a a0 atilde},
\begin{equation}
\label{eqn:b b0 btilde}
    b(\ts,x) = b_0(\ts,\omega) + \rho\tilde{b}(\ts,x),\quad \tilde{b}(\ts,x) = \int_0^1 (\pa_\rho b) (\ts,(s\rho)^{-1}\omega)\,ds.
\end{equation}
It follows that $\tilde{b}$ satisfies the same estimate \eqref{b(t*,x) estimate} as $b$ does. In particular, we have
\begin{equation}
\label{eqn:b b0}
    b(\ts,x) - b_0(\ts,\omega) \in \CI(\RR_{\ts};\A^1(X)),\quad x=\rho^{-1}\omega,\ \;\omega\in\mathbb{S}^2 .
\end{equation}
For the initial value problem \eqref{initial value problem Sec 6}, in view of \eqref{phi conormal est high order}, and following the proof of Lemma \ref{lem:rad field small time}, the second order radiation field expansion \eqref{radiation field 2nd order} still holds with $\phi_{\rm{rad},2}$ given by \eqref{eq:form of 2nd radi field}:
\begin{equation}
\label{eq:R2 Sec 6}
    R_2(\ts,\omega) = -\frac{1}{2}\tilde{g}(\omega)\partial_{\ts} R_1(\ts,\omega) + \frac{1}{2}\int_0^{\ts} \Delta_\omega R_1(s,\omega)\,ds.
\end{equation}
This follows since $a_0(\ts,\omega), \,c_2(\omega),\, d_1(\omega)\equiv 0$ per \eqref{initial value problem Sec 6}, \eqref{initial data finer assumption}.
Here we write $R_1 = \phi_{\rm{rad},1}$, $R_2 = \phi_{\rm{rad},2}$ for simplicity. Using \eqref{radiation field 2nd order}, together with \eqref{initial data finer assumption}, \eqref{eqn:b b0} and Lemma \ref{lem:rewritingBoxg}, we obtain a finer version of \eqref{eqn:Q0 phi 2nd}, 
\begin{equation}
\label{eqn:Q0 phi 3rd order}
\begin{split}
    Q_0(\phi - \rho R_1 - \rho^2 R_2) &\in
    \frac{\rho^3}{2}\int_0^{\ts} \bigr((\Delta_\omega-2)R_2(s,\omega) + 2m R_1(s,\omega) - b_0 R_1(s,\omega)^4 \bigr)\,ds \\
    & -\bigl(\frac{\tilde{g}}{2}\pa_{\ts} R_2 + \frac{g_3}{2} \pa_{\ts}R_1 + \widetilde{Q}_1 R_1 \bigr)\rho^3 + \CI([0,\infty)_{\ts};\A^{4-}(X)),
\end{split}
\end{equation}
where we denoted $\rho^{-1}\circ\widetilde{Q}\circ \rho = \widetilde{Q}_1 \in\Diffb^1(X)$. Inverting \eqref{eqn:Q0 phi 3rd order} using $Q_0\rho^3 = 2\rho^3$ and Proposition \ref{prop:bODE0}, we conclude
\begin{equation}
\label{radiation field 3rd order}
    \phi \in \rho\phi_{\rm{rad},1}(\ts,\omega) + \rho^2\phi_{\rm{rad},2}(\ts,\omega) + \rho^3\phi_{\rm{rad},3}(\ts,\omega) + \CI([0,\infty)_{t_*}; \A^{4-}(X)),
\end{equation}
where the third radiation field $\phi_{\rm{rad},3}\in\CI([0,\infty)_{t_*}\times\S_{\omega}^2)$ is given by
\begin{equation}
\label{eq:form of 3rd radi field}
    \begin{split}
         \phi_{\rm{rad},3}&(\ts,\omega) =  -\frac{1}{2}\widetilde{Q}_1 \phi_{\rm{rad},1}(\ts,\omega) - \frac{1}{4}g_3(\omega)\pa_{\ts}\phi_{\rm{rad},1}(\ts,\omega) - \frac{1}{4} \tilde{g}(\omega) \pa_{\ts}\phi_{\rm{rad},2}(\ts,\omega)\\
        &\quad + \frac{1}{4}\int_0^{\ts} \bigr((\Delta_\omega-2)\phi_{\rm{rad},2}(s,\omega) + 2m \phi_{\rm{rad},1}(s,\omega) - b_0(s,\omega) \phi_{\rm{rad},1}^4(s,\omega)\bigr)\,ds.
    \end{split}
\end{equation}
We remark that one can establish the third order radiation field expansion \eqref{radiation field 3rd order}, \eqref{eq:form of 3rd radi field} in the cases $p\geq 5$ without additional work, by noting that it suffices to replace $b_0$ with $0$ in the formula \eqref{eq:form of 3rd radi field} when $p\geq 5$. Moreover, by arguing as in the proof of Lemma \ref{lem:rad field large time} and using the improved \textit{a priori} estimate \eqref{phi conormal est high order} over \eqref{phi conormal est}, we can derive that
\begin{equation}
\label{rad field 1 decay sec 6}
    \begin{gathered}
			\phi_{\rm{rad},1}(\ts,\omega)\in \CI(\S^2;\A^2([0,1)_\tau),\quad \tau := \ts^{-2}, \\
			\phi - \rho \phi_{\rm{rad},1}(\ts,\omega) \in \rho^2 \tau \A^0 ([0,1)_\rho \times [0,1)_\tau \times \mathbb{S}^2)\cap \rho \tau^2 \A^0 ([0,1)_\rho \times [0,1)_\tau \times \mathbb{S}^2).
		\end{gathered}
\end{equation}

We now follow the approach in Section \ref{subsection:Preliminaries for spatially compact asymp} to convert the initial value problem \eqref{initial value problem Sec 6} into a forcing problem. We fix a cutoff $\chi(\ts)\in \CI(\RR_{\ts};\RR)$ which is identically equal to $0$ when $\ts\leq 1/2$ and identically equal to
$1$ when $\ts\geq 1$. Then $\phi_\chi := \chi\phi$, with $\phi$ solving \eqref{initial value problem Sec 6}, vanishes for $\ts\leq 1/2$ and solves the forcing problem:
\begin{equation}
\label{forcing problem Sec 6}
    \Box_g \phi_\chi = [\Box_g , \chi(\ts)]\phi + \chi(\ts) b(\ts,x)\phi^p =: f,\quad \textrm{on }\RR_{\ts}\times X^\circ.
\end{equation}
Here $f$ can be computed, in view of Lemma \ref{lem:rewritingBoxg}:
\begin{equation}
\label{the forcing f Sec 6}
    f = \chi b\phi^p - 2\chi'(\rho Q + g^{00}\pats)\phi - \chi'' g^{00} \phi. 
\end{equation}
Following the argument in Section \ref{subsection:Preliminaries for spatially compact asymp}, we can show that \eqref{phichi by IFT} still holds, namely
\[
   \phi_\chi (t_*,x) = \frac{1}{2\pi}\int_{\R} e^{-i\sigma t_*} \wh\Box(\sigma)^{-1} \widehat{f}(\sigma,x) \,d\sigma.
\]
As in \eqref{eqn:phi012}, we decompose $\phi_\chi = \phi_0 + \phi_1 + \phi_2$ in frequency by defining
\[
    \begin{split}
		\widehat{\phi_0}(\sigma) &:= \psi(\sigma)\wh\Box(\sigma)^{-1} \wh{f}(0), \\
		\widehat{\phi_1}(\sigma) &:= \sigma\psi(\sigma)\wh\Box(\sigma)^{-1}(\sigma^{-1}(\wh{f}(\sigma)-\wh{f}(0))), \quad 
		\widehat{\phi_2}(\sigma) := (1-\psi(\sigma))\wh\Box(\sigma)^{-1}\wh{f}(\sigma),
	\end{split}
\]
with $\psi\in \CcI((-1,1)_\sigma)$ being an even function that is identically $1$ for $\sigma$ near $0$.

\subsection{The low frequency part $\phi_0$}
\label{subsection: low frequency Sec 6}
We derive an asymptotic expansion of
\begin{equation}
\label{eqn:fhat(0) defn Sec 6}
    \hat{f}_0(x) := \hat{f}(0,x) = \int_{\mathbb R} f(t_*,x)\,dt_*,
\end{equation}
using a similar argument as in Section \ref{subsubsection:low frequency part}. To this end, we first write
\begin{equation}
\label{phi p power expansion}
	\phi^p = \rho^p \phi_{{\rm rad},1}^p + \sum_{j=1}^{p}\binom{p}{j} \rho^{p-j}\phi_{{\rm rad},1}^{p-j} (\phi-\rho\phi_{{\rm rad},1})^j.
\end{equation}
It follows from \eqref{rad field 1 decay sec 6} and \eqref{eqn:b b0 btilde}, \eqref{eqn:b b0} that
\begin{equation}
\label{p nonlinearity expansion}
	b(\ts,x)\phi^p - b_0(\ts,\omega)\rho^p \phi_{{\rm rad},1}^p \in \A^{2p-1}([0,1)_\tau ; \A^{p+1}(X)),\quad \tau=\ts^{-1} .
\end{equation}
Then we have
\begin{equation}
\label{chi b phi^p}
    \int_{\RR} \chi(\ts)b(\ts,x)\phi^p d\ts \in \left( 
    \int_{\RR} \chi b_0\phi_{{\rm rad},1}^p\,d\ts\right)\rho^p + \A^{p+1}(X) .
\end{equation}
For the remaining terms in \eqref{the forcing f Sec 6}, we combine \eqref{radiation field 3rd order} with \eqref{eqn:g00 Sec 6}, \eqref{eqn:b b0}, recalling Lemma \ref{lem:rewritingBoxg}, to write:
\[
\begin{gathered}
    - 2\chi'(\rho Q + g^{00}\pats)\phi - \chi'' g^{00} \phi - \bigl(-2\chi'(R_2+\tilde{g}\pa_{\ts}R_1) - \chi''\tilde{g} R_1\bigr)\rho^3 \\
    - \bigl(-2\chi'(2R_3+\widetilde{Q}_1 R_1 + \tilde{g}\pa_{\ts}R_2 + g_3 \pa_{\ts}R_1) - \chi''(\tilde{g}R_2+ g_3 R_1)\bigr)\rho^4 \in \A^5(X),
\end{gathered}
\]
where $\rho^{-1}\circ\widetilde{Q}\circ\rho = \widetilde{Q}_1$. For simplicity, we write $R_j = \phi_{{\rm rad},j}$, $j=1,2,3$. We conclude from the above computations that $\wh{f}_0$ admits the following asymptotic expansion:
\begin{equation}
\label{fhat(0) asymp Sec 6}
    \wh{f}_0(x) \in c(\omega)\rho^3 + d(\omega)\rho^4 + \A^5(X)
\end{equation}
where $c(\omega),\,d(\omega)\in\CI(\S^2)$ are given by
\begin{equation}
\label{eq:c(omega) defn Sec 6}
    c(\omega) = \int_{\RR} (-2\chi'(R_2+\tilde{g}\pa_{\ts}R_1) - \chi''\tilde{g} R_1)\,d\ts = \int_{\RR} (-2\chi'R_2 -\chi'\tilde{g}\pa_{\ts}R_1)\,d\ts;
\end{equation}
\begin{equation}
\label{eq:d(omega) defn Sec 6}
\begin{split}
    d(\omega) &= \int_{\RR} ( \chi F_p -2\chi'(2R_3+\widetilde{Q}_1 R_1 + \tilde{g}\pa_{\ts}R_2 + g_3 \pa_{\ts}R_1) - \chi''(\tilde{g}R_2+ g_3 R_1))\,d\ts \\
    &= \int_{\RR} (\chi F_p -4\chi'R_3 - 2\chi'\widetilde{Q}_1 R_1 - \chi' \tilde{g}\pa_{\ts}R_2 - \chi' g_3 \pa_{\ts}R_1)\,d\ts.
\end{split}
\end{equation}
Here we used the fundamental theorem of calculus as in \eqref{eqn:c(omega)}. $F_p$ is a convenient notation to summarize all cases $p\geq 4$. In fact, in view of \eqref{chi b phi^p}, we have $F_p = b_0 R_1^4$ if $p=4$, and $F_p=0$ for all $p\geq 5$. 

Let us make an important observation about the angular coefficient $c(\omega)$ in \eqref{fhat(0) asymp Sec 6}. Inspired by the analysis in Section \ref{subsubsection:low frequency part}, the significance of $c(\omega)$ is its average, namely $c_0 = \frac{1}{4\pi}\int_{\S^2} c(\omega) d\omega$. We repeat the computation deriving \eqref{eq:c0final}. In view of \eqref{eq:c(omega) defn Sec 6} (same as \eqref{eq:c0final} but $a_0(\ts,\omega)\equiv 0$) and \eqref{eq:R2 Sec 6}, we get
\[
    c(\omega) = -\int_{\RR_{\ts}}\chi'(\ts)\int_0^{\ts} \Delta_\omega R_1(s,\omega)\,ds\,d\ts,
\]
and therefore, using the fact that $\int_{\S^2}\Delta_\omega h\,d\omega = 0$ for all $h\in\CI(\S^2)$,
\begin{equation}
\label{c(omega) vanishing avg Sec 6}
    c_0 = \frac{1}{4\pi}\int_{\S^2} c(\omega) \,d\omega = -\frac{1}{4\pi}\int_{\RR_{\ts}}\chi'(\ts)\int_0^{\ts} \int_{\S_\omega^2} \Delta_\omega R_1(s,\omega)\,d\omega\,ds\,d\ts = 0.
\end{equation}
We conclude that the angular coefficient $c(\omega)$ in \eqref{fhat(0) asymp Sec 6} has vanishing average on $\S_\omega^2$.

The key ingredient in the analysis of low frequency part $\phi_0$ is the resolvent expansion. To this end, we generalize \cite[Theorem 3.1]{Hin22}, which allows us to treat the inputs $\wh{f}_0$ of the form \eqref{fhat(0) asymp Sec 6}, with the property $\int_{\S^2} c(\omega)\,d\omega = 0$. We remark that the following theorem shows that the resolvent, acting on this more general type of input, produces the same classes of singular and regular terms as in \cite[Theorem 3.1]{Hin22}.

\begin{theorem}
\label{thm:resolvent expansion Sec 6}
     Let $0<\alpha<1$. Let $c(\omega),\,d(\omega)\in \CI(\S^2)$ satisfy $\int_{\S^2} c(\omega)\,d\omega = 0$, and let $\ti f(x) \in \A^{4+\alpha}(X)$. For positive frequencies $\sigma>0$, the resolvent $\wh\Box(\sigma)^{-1}$ acting on 
     \[ 
     	f = c(\omega)\rho^3 + d(\omega)\rho^4 + \tilde{f} \in \A^{((3,0),4+\alpha)}(X)
     \]
    is then of the form: $\wh\Box(\sigma)^{-1}f = u_{\rm sing}(\sigma) + u_{\rm reg}(\sigma)$,
      \begin{align*}
         u_{\rm sing}(\sigma) &\in \sigma^2\A^{1-,((0,0),1-),((0,1),1-)}(X^+_{\rm res}), \\
        u_{\rm reg}(\sigma) &\in \CI([0,1)_\sigma;\A^{1-}(X)) + \A^{\alpha-,2+\alpha-,((2,0),2+\alpha-)}(X^+_{\rm res}), 
      \end{align*}
      where the leading term of $\sigma^{-2}u_{\rm sing}(\sigma)$ at ${\rm zf}$ is $-d_X(f)\log(\sigma/\rho)$. The constant $d_X(f)$ is given by the following formula, where $\tilde{c}(\omega)$ is defined in \eqref{Tilde c(omega) Sec 6}:
      \begin{equation}
          d_X(f) = \frac{m}{\pi}\int_X \big(f-\wh\Box(0)(\tilde{c}(\omega)\rho)\big)\, |dg_X| - \frac{1}{2\pi}\int_{\S^2} d(\omega)\,d\omega .
          \label{eq:d(f)}
      \end{equation}
    \end{theorem}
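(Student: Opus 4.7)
The proof adapts the argument of Proposition~\ref{Hintz Lemma 2.24} and \cite[Theorem 3.1]{Hin22} one indicial order deeper. The hypothesis $\int_{\S^2}c\,d\omega = 0$ ensures that the $\rho^3$-leading term of $f$ can be absorbed by a smooth correction with no logarithmic loss, reducing the problem to an effective $\rho^4$-type forcing. An additional round of the resolvent identity then produces the $\log(\sigma/\rho)$ singularity at $\mathrm{zf}$, with an extra $\sigma$-factor compared to the $\rho^2$-setting of Proposition~\ref{Hintz Lemma 2.24}, explaining the $\sigma^2\log(\sigma/\rho)$ structure of $u_{\rm sing}$.

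\textbf{Indicial reduction and iterated resolvent.} Fredholm theory on $\S^2$ with $\ker\Delta_\omega = \mathbb{R}$ produces $\tilde{c}\in\CI(\S^2)$ solving $\Delta_\omega\tilde{c} = c$, normalized by $\int_{\S^2}\tilde{c}\,d\omega = 0$. Using Lemma~\ref{lem:rewritingBoxg}, one computes
\[
\wh\Box(0)(\tilde{c}(\omega)\rho) = c(\omega)\rho^3 + 2m\tilde{c}(\omega)\rho^4 + \A^5(X),
\]
so $\tilde f_1 := f - \wh\Box(0)(\tilde{c}\rho) \in \A^{((4,0),4+\alpha)}(X)$ has leading $\rho^4$ coefficient $\tilde d(\omega) := d(\omega) - 2m\tilde{c}(\omega)$. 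Decomposing
\[
\wh\Box(\sigma)^{-1}f = \tilde{c}\rho + \wh\Box(\sigma)^{-1}\tilde f_1 - \wh\Box(\sigma)^{-1}(\wh\Box(\sigma)-\wh\Box(0))\tilde{c}\rho,
\]
the last term lies in the regular space $\A^{\alpha-,2+\alpha-,((2,0),2+\alpha-)}(X^+_\mathrm{res})$ by Proposition~\ref{prop:1} and Lemma~\ref{lem:aux}, since $Q_0\rho=0$ gives $(\wh\Box(\sigma)-\wh\Box(0))\tilde{c}\rho\in\sigma\A^4(X)+\sigma^2\A^3(X)$. For $\wh\Box(\sigma)^{-1}\tilde f_1$ apply the resolvent identity to obtain $v_1 := \wh\Box(0)^{-1}\tilde f_1 \in \A^{((2,1),3-)}(X)$ (the $\rho^2\log\rho$ piece appearing through the projection onto $\ker(\Delta_\omega-2) = \{l=1\}$); this contributes the $\CI([0,1)_\sigma;\A^{1-}(X))$ part of $u_{\rm reg}$. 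In the remainder $-\sigma\wh\Box(\sigma)^{-1}(2i\rho Q+\sigma g^{00})v_1$, the operator $2i\rho Q_0 = 2i\rho(\rho\pa_\rho-1)$ converts the $\rho^2$ leading of $v_1$ into a $\rho^3$-type forcing. A further resolvent iteration reduces this to a $\rho^2\cdot\mathrm{const}(\omega)$ input, and Proposition~\ref{Hintz Lemma 2.24} then yields a $\sigma\log(\sigma/\rho)$ singularity which, multiplied by the two accumulated $\sigma$-prefactors, gives $u_{\rm sing}\in\sigma^2\A^{1-,((0,0),1-),((0,1),1-)}(X^+_\mathrm{res})$ as required.

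\textbf{Computing $d_X(f)$ and main obstacle.} The coefficient of $\log(\sigma/\rho)$ at $\mathrm{zf}$ is extracted by isolating the $l=0$ spherical projection carried through both resolvent iterations. The term $-\frac{1}{2\pi}\int_{\S^2}d(\omega)\,d\omega$ is the direct spherical mean of the $\rho^4$-coefficient of $\tilde f_1$ (the $2m\tilde{c}$ subtraction averaging to zero under $\int\tilde{c}=0$), while the $\frac{m}{\pi}\int_X\tilde f_1\,|dg_X|$ piece arises from a Green-type pairing of $\tilde f_1$ against the constant function $1\in\ker\wh\Box(0)^*$, with the mass weight entering through the interaction of $L_1 = 2m(\rho\pa_\rho)^2$ with the $\rho\log\rho$ component of the intermediate solution produced in the second iteration. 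The main obstacle is the concurrent $l=0$ and $l=1$ bookkeeping: one must verify that the $\rho^2\log\rho$ component of $v_1$ generated by the $l=1$ obstruction contributes to the final log coefficient only through $Q_0$ (yielding a $\rho^2$ term whose $l=0$ mean survives) and does not introduce further logarithmic pollution. Executing this requires an analog of the pairing identity of \cite[Lemma 3.3]{Hin22} carried one indicial order deeper, together with careful control of the normalization of $\tilde{c}$ to ensure $d_X(f)$ is independent of that choice.
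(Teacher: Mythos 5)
Your strategy is essentially the paper's: exploit $\int_{\S^2}c\,d\omega=0$ to solve $\Delta_\omega\tilde c=c$ and absorb the $c(\omega)\rho^3$ forcing via the indicial solution $\tilde c(\omega)\rho$, then iterate the resolvent identity twice, tracking the indicial roots $\rho$, $\rho^2\log\rho$, $\rho^2$, $\rho\log\rho$ until a $\rho^2\,\CI(\S^2)$ input is reached, to which Proposition~\ref{Hintz Lemma 2.24} applies and produces the $\sigma^2\log(\sigma/\rho)$ singularity. The only structural difference is cosmetic: you subtract $\tilde c\rho$ at the level of $\wh\Box(\sigma)^{-1}$, whereas the paper performs the subtraction inside the expansion of $u_0=\wh\Box(0)^{-1}f$.

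There are, however, two concrete errors. First, the membership $v_1:=\wh\Box(0)^{-1}\tilde f_1\in\A^{((2,1),3-)}(X)$ is false: since $\tilde f_1$ lies only in $\A^4(X)$, $v_1$ carries a nontrivial $\rho^1$ leading term $c_{(0)}\rho$ with $c_{(0)}=\tfrac{1}{4\pi}\int_X\tilde f_1\,|dg_X|$ (the analogue of \cite[Lemma 3.2]{Hin22}), and the index set $(2,1)$ does not contain $(1,0)$. This is not a harmless omission: it is $\rho L_1$ acting on this $\rho^1$ term — not, as you write, the interaction of $L_1$ with a $\rho\log\rho$ component of a later iterate — that generates the $2m c_{(0)}\rho^2$ contribution to the $\rho^2$-coefficient, which after two more applications of $Q_0$ becomes the $\tfrac{m}{\pi}\int_X$ part of \eqref{eq:d(f)}. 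Taken literally, your stated expansion of $v_1$ would yield $d_X(f)=-\tfrac{1}{2\pi}\int_{\S^2}d\,d\omega$ only; your final formula is right only because the last paragraph reinstates the constant-pairing by hand. Second, the correction term $\wh\Box(\sigma)^{-1}(\wh\Box(\sigma)-\wh\Box(0))(\tilde c\rho)$, with argument in $\sigma\A^4(X)+\sigma^2\A^3(X)$, does not land in $\A^{\alpha-,2+\alpha-,((2,0),2+\alpha-)}(X^+_{\rm res})$ after a single application of Lemma~\ref{lem:aux} (which only gives $\A^{1-,2-,1}$, too weak at ${\rm tf}$ and ${\rm zf}$); one further resolvent iteration is needed, peeling off a term $\sigma\,\wh\Box(0)^{-1}(\cdot)$ into the $\CI([0,1)_\sigma;\A^{1-}(X))$ part and checking that no additional $\sigma^2\log(\sigma/\rho)$ is produced. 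A minor further point: your $\sigma$-bookkeeping ("a $\sigma\log(\sigma/\rho)$ singularity \dots multiplied by the two accumulated $\sigma$-prefactors") would give $\sigma^3\log$, not the correct $\sigma^2\log$; Proposition~\ref{Hintz Lemma 2.24} contributes the logarithm with no $\sigma$-prefactor of its own, and the two iterations contribute exactly $\sigma^2$.
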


\begin{proof}
We follow the outline of the proof of \cite[Theorem 3.1]{Hin22} but present a more detailed analysis due to the appearance of the terms $c(\omega)\rho^3$ and $d(\omega)\rho^4$ in the expansion of $f$. 

We have \textit{a priori} $u:=\wh\Box(0)^{-1}f \in\A^{1-}(X)$; then, in view of \eqref{eqn:Boxhat(0)}, 
\[
    L_0 u = \rho^{-2} f - (\rho L_1 + \rho^2 L_2)u = c(\omega)\rho + d(\omega)\rho^2 + \rho^{-2}\tilde{f} - (\rho L_1 + \rho^2 L_2)u \in c(\omega)\rho + \mathcal{A}^{2-} .
\]
To solve away the term $c(\omega)\rho$, we expand $c(\omega)$ into spherical harmonics by writing
\begin{equation}
\label{Expand c(omega) Sec 6}
	c(\omega) = \sum_{k=1}^\infty c_k(\omega),\quad c_k(\omega)\in\mathcal{Y}_k,
\end{equation}
where $\mathcal{Y}_k$ is the $k$-th eigenspace, so that $\Delta_\omega|_{\mathcal{Y}_k} = k(k+1)$. We note that there is no $\mathcal{Y}_0$--component in $c(\omega)$ since $\int_{\S^2} c(\omega)\,d\omega=0$, as $\mathcal{Y}_0$ consists of constant functions. Recalling that $L_0 = -(\rho\pa_\rho)^2 + \rho\pa_\rho + \Delta_\omega$, we compute
\begin{equation}
\label{L0 on rhoYk}
	L_0(\rho c_k(\omega)) = k(k+1)\rho c_k(\omega),\quad k\geq 0.
\end{equation}
We can then find $\tilde{c}(\omega)\in\CI(\S^2)$, solving $L_0 (\tilde{c}(\omega)\rho) = c(\omega)\rho$ and defined by
\begin{equation}
\label{Tilde c(omega) Sec 6}
	\tilde{c}(\omega) = \sum_{k=1}^{\infty} \tilde{c}_k(\omega),\quad \tilde{c}_k(\omega):=\frac{c_k(\omega)}{k(k+1)},\quad\text{with $c_k(\omega)$'s given in \eqref{Expand c(omega) Sec 6}}.
\end{equation} 
It follows that $L_0 (u-\tilde{c}(\omega)\rho) \in \A^{2-}(X)$ and $\wh\Box(0)(u-\tilde{c}(\omega)\rho) = f - \wh\Box(0)(\tilde{c}(\omega)\rho) \in \A^4(X)$. Following the proof of \cite[Lemma 3.2]{Hin22}, we therefore obtain 
\[
    u - \tilde{c}(\omega)\rho = c_{(0)}\rho + \tilde{u}_0,\quad \tilde{u}_0\in \A^{2-},\quad c_{(0)} = \frac{1}{4\pi}\int_X \big(f-\wh\Box(0)(\tilde{c}(\omega)\rho)\big) \,|dg_X| .
\]
Next, we expand $\tilde u_0$. Noting that $L_0 \rho = 0$, we get 
\[
\begin{split}
	L_0 \tilde{u}_0 = L_0 (u-\tilde{c}(\omega)) &= d(\omega)\rho^2 + \rho^{-2}\tilde{f} - \rho L_1 ((c_{(0)} + \tilde{c}(\omega))\rho) - \rho L_1 \tilde{u}_0 - \rho^2 L^2 u \\
	&\in (d(\omega) - 2mc_{(0)} -2m\tilde{c}(\omega))\rho^2 + \mathcal{A}^{2+\alpha} .
\end{split}
\]
By direct computation, we have for any $Y_k(\omega)\in\mathcal{Y}_k$:
\[
    L_0(\rho^2 Y_k(\omega)) = (k(k+1) -2)\rho^2 Y_k(\omega),\quad k\geq 0;\qquad {\rm and } \ L_0((\rho^2\log\rho) Y_1(\omega)) = -3\rho^2 Y_1(\omega).
\]
Expanding $d(\omega)=d_0+\sum_{k=1}^{\infty} d_k(\omega)$ into spherical harmonics, with $d_0\in\calY_0$ being a constant and $d_k(\omega)\in\calY_k$, we can obtain from the computations above that
\[
L_0\left(\tilde u_0 +  \frac{d_1(\omega)-2m\tilde{c}_1(\omega)}{3}\rho^2\log\rho - \big( mc_{(0)}-\frac{d_0}{2} + \sum_{k=2}^\infty \frac{d_{k}(\omega)-2m\tilde{c}_k(\omega)}{k(k+1)-2} \big)\rho^2\right) \in \mathcal{A}^{2+\alpha}.
\]
Using the Mellin transform as in the proof of Proposition \ref{prop:bODEconormal}, we get
\[
\tilde u_0 +  \frac{d_1(\omega)-2m\tilde{c}_1(\omega)}{3}\rho^2\log\rho - \big( mc_{(0)}-\frac{d_0}{2} + \sum_{k=2}^\infty \frac{d_{k}(\omega)-2m\tilde{c}_k(\omega)}{k(k+1)-2} \big)\rho^2 = \rho^2   Y_{1} + \tilde u
\]
for some $Y_1\in\mathcal{Y}_1$ and some $\tilde{u} \in \A^{2+\alpha-}(X)$.
We then conclude and write
\[
\begin{gathered}
    u_0 :=\wh\Box(0)^{-1} f = (c_{(0)}+\tilde{c}(\omega))\rho - \frac{d_1(\omega)-2m\tilde{c}_1(\omega)}{3}\rho^2\log\rho + (\tilde{d}_0 + \tilde{d}(\omega))\rho^2 + \tilde{u}, \\
    \text{with}\quad\tilde{d}_0 := mc_{(0)}-\frac12 d_0, \quad \tilde{d}(\omega) := Y_1(\omega) + \sum_{k=2}^\infty \frac{d_{k}(\omega)-2m\tilde{c}_k(\omega)}{k(k+1)-2} \in \bigoplus_{k=1}^\infty \mathcal{Y}_k
\end{gathered}
\]
As in \eqref{eqn:f_1(sigma)}, we next compute
\begin{equation}
\label{f_1(sigma) defn S6}
    f_1(\sigma) := -\sigma^{-1}(\wh\Box(\sigma) - \wh\Box(0)) u_0 = (-2i\rho(Q_0 + \rho^2\widetilde{Q})-g^{00}\sigma)u_0 .
\end{equation}
Noting that $Q_0\rho = 0$, $Q_0(\rho^2\log \rho) = \rho^2 \log\rho + \rho^2$, and $Q_0(\rho^2)=\rho^2$, we obtain
\begin{equation}
\label{f_1(sigma) expansion S6}
    f_1(\sigma) = \frac{2i}{3}(d_1(\omega)-2m\tilde{c}_1(\omega))\rho^3\log\rho - 2i(\tilde{d}_0 -\frac{d_1(\omega)}{3}+\frac{2m\tilde{c}_1(\omega)}{3}+\tilde{d}(\omega))\rho^3 + \tilde{f}_1(\sigma),
\end{equation}
where $\tilde{f}_1(\sigma) := -2i\rho Q_0 \tilde{u} - 2i\rho^3\widetilde{Q}u_0 - \sigma g^{00}u_0 \in \A^{3+\alpha-}(X) + \sigma \A^3(X)$.

\noindent
Let us now iterate the resolvent identity \eqref{resolvent identity: cubic}, with $f_1(\sigma)$ defined in \eqref{f_1(sigma) defn S6}; we have
\begin{equation}
\label{resolvent expansion 1 S6}
\begin{split}
    \wh\Box(\sigma)^{-1} f &= u_0 + \sigma\wh\Box(\sigma)^{-1} f_1(\sigma) \\
    &= u_0 + \sigma\wh\Box(0)^{-1} f_1(\sigma) - \sigma\wh\Box(\sigma)^{-1} (\wh\Box(\sigma)-\wh\Box(0))\wh\Box(0)^{-1} f_1(\sigma).
\end{split}
\end{equation}
We therefore expand $\wh\Box(0)^{-1} f_1(\sigma)$: to this end, we define
\[
    u_1 := \wh\Box(0)^{-1}f_1(0) \in \A^{1-}(X).
\]
In view of \eqref{f_1(sigma) expansion S6}, we have 
\[
\begin{split}
    L_0 u_1 &= \rho^{-2} f_1(0) - (\rho L_1 + \rho^2 L_2)u_1 \\
    &\in \frac{2i}{3}(d_1(\omega)-2m\tilde{c}_1(\omega))\rho\log\rho - 2i\bigl(\tilde{d}_0 -\frac{d_1(\omega)}{3}+\frac{2m\tilde{c}_1(\omega)}{3}+\tilde{d}(\omega)\bigr)\rho + \A^{1+\alpha-}(X).
\end{split}
\]
To solve away the leading terms, we compute directly:
\[
    L_0(Y_1(\omega)(\rho\log\rho + \dfrac{\rho}{2})) = 2Y_1(\omega)\rho\log\rho,\quad L_0(\rho\log\rho) = -\rho,
\]
Using \eqref{L0 on rhoYk}, we conclude:
\[
\begin{gathered}
	L_0 \left(u_1 - \bigl(2i\tilde{d}_0 + \frac{i}{3}(d_1(\omega)-2m\tilde{c}_1(\omega))\bigr)\rho\log\rho - \widetilde{Y}(\omega)\rho \right) \in \A^{1+\alpha-}(X), \\
	\text{with}\quad \widetilde{Y}(\omega) := \frac{i}{2}(d_1(\omega)-2m\tilde{c}_1(\omega))-iY_1(\omega) - 2i\sum_{k=2}^\infty \frac{d_{k}(\omega)-2m\tilde{c}_k(\omega)}{k(k+1)(k^2 + k -2)} \in \bigoplus_{k=1}^\infty \mathcal{Y}_k.
\end{gathered}
\]
In view of Proposition \ref{prop:bODEconormal}, we obtain
\begin{equation}
\label{u_1 expansion S6}
    u_1 = \bigl(2i\tilde{d}_0 + \frac{i}{3}(d_1(\omega)-2m\tilde{c}_1(\omega))\bigr)\rho\log\rho + \tilde{u}_1,\quad \tilde{u}_1 \in \A^{((1,0),1+\alpha-)}(X).
\end{equation}
Writing $f_1(\sigma) = f_1(0) + \sigma f_1'$ with $f_1'=g^{00}u_0 \in \A^3(X)$ and letting $u_1' :=\wh\Box(0)^{-1}(f_1')\in\A^{1-}(X)$, we rewrite the expansion \eqref{resolvent expansion 1 S6} as follows
\begin{equation}
\label{resolvent expansion 2 S6}
    \begin{split}
    \wh\Box(\sigma)^{-1} f = u_0 &+ \sigma u_1 + \sigma^2 u_1' - \sigma\wh\Box(\sigma)^{-1} (\wh\Box(\sigma)-\wh\Box(0))u_1 - \sigma^2\wh\Box(\sigma)^{-1} (\wh\Box(\sigma)-\wh\Box(0))u_1'
\end{split}
\end{equation}
To examine the fourth term on the right, which contributes to $u_{\rm sing}$, we follow the computation in \eqref{f_1(sigma) defn S6} and use \eqref{u_1 expansion S6}:
\begin{equation}
\label{f_2 expansion S6}
\begin{gathered}
    f_2(\sigma) := -\sigma^{-1}\bigl(\wh\Box(\sigma)-\wh\Box(0)\bigr)u_1 = (-2i\rho(Q_0 + \rho^2\widetilde{Q})-g^{00}\sigma)u_1 = f_{2,0} + \tilde f_2(\sigma), \\
    f_{2,0} = -2i\rho Q_0 \bigl(2i\tilde{d}_0 + \frac{i}{3}(d_1(\omega)-2m\tilde{c}_1(\omega))\bigr)\rho\log\rho = \bigl(4 \tilde{d}_0 + \frac{2}{3}d_1(\omega) - \frac{4m}{3}\tilde{c}_1(\omega) \bigr)\rho^2,\\
    \tilde f_2(\sigma) = -2i\rho Q_0\tilde{u}_1 - 2i\rho^3\widetilde{Q}u_1 - \sigma g^{00} u_1 \in \A^{2+\alpha-}(X) + \sigma\A^{3-}(X),
\end{gathered}
\end{equation}
Here, we used the fact that $Q_0$ eliminates $\rho$, which is the $(1,0)$-leading term in $\tilde{u}_1$, so $Q_0\tilde{u}_1\in \A^{1+\alpha-}(X)$. We then apply Proposition \ref{Hintz Lemma 2.24} to $f_{2,0}$ and conclude
\begin{equation}
\label{EqPBoxInvf2Nonlin}
  \wh\Box(\sigma)^{-1}f_{2,0} \in \cA^{1-,((0,0),1-),((0,1),1-)}(X^+_{\rm res}),
\end{equation}
with leading order term at $\zface$ given by $-4\tilde{d}_0\log\frac{\sigma}{\rho}$, and leading order term at $\tface$ given by $\Tilde{u}_{\rm{mod}} = \widetilde\Box^{-1}\bigl(\hat\rho^2 (4 \tilde{d}_0 + \frac{2}{3}d_1(\omega) - \frac{4m}{3}\tilde{c}_1(\omega))\bigr)$ (in the notation of Lemma \ref{lem:2.23Hintz}). Here, we have used the facts that $\int_{\S^2} d_1(\omega)\,d\omega=0$ and $\int_{\S^2} \tilde{c}_1(\omega)\,d\omega = 0$, since $d_1(\omega),\,\tilde{c}_1(\omega)\in\mathcal{Y}_1$. %

\noindent
Recalling from Proposition \ref{prop:1} that $\wh\Box(\sigma)^{-1}: \A^{2+\beta}(X) \to \A^{\beta-,\beta-,((0,0),\beta-)}(X_{\rm{res}}^+)$, $0<\beta<1$, it then follows from \eqref{f_2 expansion S6} that
\[
\begin{gathered}
    \sigma^2\wh\Box(\sigma)^{-1}\Tilde{f}_2(0) \in \sigma^2 \A^{\alpha-,\alpha-,((0,0),\alpha-)}(X_{\rm{res}}^+) = \A^{\alpha-,2+\alpha-,((2,0),2+\alpha-)}(X_{\rm{res}}^+), \\
    \sigma^2\wh\Box(\sigma)^{-1}(\Tilde{f}_2(\sigma) - \Tilde{f}_2(0)) \in \sigma^3 \A^{1-,1-,((0,0),1-)}(X_{\rm{res}}^+) = \A^{1-,4-,((3,0),4-)}(X_{\rm{res}}^+).
\end{gathered}
\]
For the last term in \eqref{resolvent expansion 2 S6}, we note that
\[
    \sigma^{-1}\bigl(\wh\Box(\sigma)-\wh\Box(0)\bigr)u_1' = (2i\rho(Q_0 + \rho^2\widetilde{Q})+g^{00}\sigma)u_1' \in \A^{2-}(X) + \sigma \A^{3-}(X).
\]
Recalling from \cite[Lemma 2.17]{Hin22} that $\wh\Box(\sigma)^{-1}: \A^{2-}(X)\to \A^{1-,0-,0-}(X_{\rm{res}}^+)$, we therefore obtain $\sigma^2\wh\Box(\sigma)^{-1} (\wh\Box(\sigma)-\wh\Box(0))u_1' \in \A^{1-,3-,3-}(X_{\rm{res}}^+)$. We conclude that
\begin{equation}
\label{final expansion S6}
    \wh\Box(\sigma)^{-1} f \in u_0 + \sigma u_1 + \sigma^2 u_1' + \sigma^2\wh\Box(\sigma)^{-1}f_{2,0} + \A^{\alpha-,2+\alpha-,((2,0),2+\alpha-)}(X_{\rm{res}}^+).
\end{equation}
Noting that $u_0 + \sigma u_1 + \sigma^2 u_1'$ belongs to $\CI([0,1)_\sigma;\A^{1-}(X))$, and by combining \eqref{EqPBoxInvf2Nonlin} and \eqref{final expansion S6}, we obtain the desired result. The constant $d_X(f) := 4\tilde{d}_0 = 4mc_{(0)} - 2d_0$ is given by \eqref{eq:d(f)}. 
\end{proof} 

We now apply Theorem \ref{thm:resolvent expansion Sec 6} to $\wh{f}_0$, which satisfies \eqref{fhat(0) asymp Sec 6}. We note that $\wh\Box(\sigma)^{-1}\wh{f}_0$ has the same form as in \cite[Theorem 3.1]{Hin22}. Following the argument in the proof of \cite[Theorem 3.4]{Hin22}, we obtain
\begin{equation}
\label{eq:phi0hat Sec 6}
    \begin{split}
     \widehat{\phi_0}(\sigma)=\psi(\sigma) \wh\Box(\sigma)^{-1} \wh{f}_0 \in &-d_X(\wh{f}_0)\psi(\sigma)\sigma^2\log(\sigma+i0)+\CcI((-1,1)_\sigma;\CI(X^\circ))\\ &+\sum_{\pminus} \A^{3-}(\pminus[0,1)_\sigma;\CI(X^\circ)),
\end{split}
\end{equation}
where we replaced the conormal space $\A^{1-}(X)$ with $\CI(X^\circ)$.

\noindent
In view of the computation in Section \ref{subsubsection:low frequency part} for $\calF^{-1}(\psi(\sigma)\sigma\log(\sigma+i0))(\ts)$, we can derive 
\[
    \calF^{-1}(\psi(\sigma)\sigma^2\log(\sigma+i0)) \in -2\chi(\ts)\ts^{-3} + \mathscr{S}(\RR_{\ts}),
\]
with $\chi(\ts)\in\CI(\RR_{\ts})$ being identically $0$ when $\ts\leq 1/3$ and identically $1$ when $\ts\geq 1$.

\noindent
We also note that the inverse Fourier transform of the $\CcI((-1,1)_\sigma;\CI(X^\circ))$ part lies in $\mathscr{S}(\RR_{\ts})$; either of the $\A^{3-}$ terms in \eqref{eq:phi0hat Sec 6} has inverse Fourier transform that is bounded by $t_*^{-4+}$ together with all their  $t_*\pa_{\ts}$ and $\pa_x$ derivatives, by \cite[Lemma 3.6]{Hin22}. We can therefore conclude that for $x\in K\Subset X^\circ$,
\begin{equation}
\label{phi0 asymp Sec 6}
    |\pa_{\ts}^j \pa_x^\beta (\phi_0 - 2d_X(\wh{f}_0) t_*^{-3})|\leq C_{j\beta K} t_*^{-4-j+},\quad \forall\, j\in\mathbb{N},\ \beta\in\mathbb{N}^3.
\end{equation} 

\begin{remark}
\label{remark:Minkowski techs} 
In Minkowski spacetime, the solution to $\Box\phi=b\phi^p, \, p \geq 5,$ arising from smooth, compactly supported initial data, admits a radiation field expansion with a simpler pattern than that in Lemma \ref{lem:rad field small time}. Namely, $\phi = \sum_{k=1}^{p-1} \rho^k R_k(t_*,\omega) + \CI([0,\infty)_{t_*};\A^{p-}(X))$, where
\[
    R_k(t_*,\omega) = \frac{1}{2(k-1)}\int_0^{t_*}(\Delta_\omega - (k-2)(k-1))R_{k-1}(s,\omega)\,ds \in \bigoplus_{j=k-1}^\infty \mathcal{Y}_j,\quad 1< k< p-1,  
\]
\[
    R_{p-1}(t_*,\omega) = \frac{1}{2(p-2)} \int_0^{t_*} ((\Delta_\omega-(p-3)(p-2))R_{p-2}(s,\omega) -b_0 R_1^p(s,\omega))\,ds.
\]
One can then extend the resolvent expansion in Theorem \ref{thm:resolvent expansion Sec 6} to higher order, noting that the singular part is more regular (in $\sigma$) than $\sigma^2\log\sigma$ when restricted to the Minkowski setting with $p\geq 5$. For instance, when $p=5$, the most singular term in $\wh\Box(\sigma)^{-1}\wh{f}_0$ is $\sigma^3\log\sigma$, which generates the leading order term $Ct_*^{-4}$ in the asymptotic expansion in $t_*$, where
\[
    C= \frac{2}{\pi}\int_0^\infty  \int_{\mathbb{S}^2} b_0 R_1^5(t_*,\omega) \,d\omega\,dt_*.
\]
\end{remark}

\subsection{Finalizing the proof of Theorem \ref{thm:highorder}}
We consider the part $\phi_1$ given by
\[
    \widehat{\phi_1}(\sigma) = \sigma\psi(\sigma)\wh\Box(\sigma)^{-1}(\sigma^{-1}(\wh{f}(\sigma)-\wh{f}(0))).
\]
Using the Taylor expansion of $\wh{f}(\sigma)$ at $\sigma=0$, we write 
\begin{equation}
\label{phi1 Taylor Sec 6}
    \frac{\widehat{f}(\sigma)-\widehat{f}(0)}{\sigma} = i\wh{g} -\frac{\sigma}{2}\wh{h} - i\sigma^2\int_0^1 \frac{(1-s)^2}{2}\int_{\RR_{\ts}} e^{i s\sigma\ts} \ts^3 f(\ts,x)\,d\ts ds,
\end{equation}
\[
\text{where}\quad \wh{g}(x) = \int_{\RR} \ts f(\ts,x)\,d\ts,\quad \wh{h}(x) = \int_{\RR} \ts^2 f(\ts,x)\,d\ts.
\]
In view of \eqref{the forcing f Sec 6} and \eqref{phi conormal est high order} we have
\[
    \int_0^1 \frac{(1-s)^2}{2}\int_{\RR_{\ts}} e^{i s\sigma\ts} \ts^3 f(\ts,x)\,d\ts ds \in \A^0(\pminus[0,1)_\sigma;\A^3(X)).
\]
Recalling the mapping property $\wh\Box(\sigma)^{-1} : \A^0([0,1)_\sigma;\A^{3}(X)) \to \A^{1-,1-,0}(X_{\rm{res}}^+)$ from Lemma \ref{lem:aux}, we conclude from \eqref{phi1 Taylor Sec 6} that
\[
    \wh{\phi_1}(\sigma) \in i\sigma\psi(\sigma)\wh\Box(\sigma)^{-1}\wh{g} - \frac{\sigma^2}{2}\psi(\sigma)\wh\Box(\sigma)^{-1}\wh{h} + \sigma^3 \A^{1-,1-,0}(X_{\rm{res}}^{\pminus}) .
\]
By a direct computation as in Section \ref{subsection: low frequency Sec 6}, one can check that both $\wh{g}$ and $\wh{h}$ admit an asymptotic expansion of the form \eqref{fhat(0) asymp Sec 6} that satisfies the assumptions of Theorem \ref{thm:resolvent expansion Sec 6}. Therefore, the inverse Fourier transform of $\psi(\sigma)\wh\Box(\sigma)^{-1}\wh{g}$ or $\psi(\sigma)\wh\Box(\sigma)^{-1}\wh{h}$ satisfies the same estimate as $\phi_0$ in \eqref{phi0 asymp Sec 6}. The extra factor of $\sigma$ improves the decay in $\ts$ since
\[
    \mathcal{F}^{-1}(i\sigma\psi(\sigma)\wh\Box(\sigma)^{-1} \wh{g})
    = -\pa_{\ts}\mathcal{F}^{-1}(\psi(\sigma)\wh\Box(\sigma)^{-1} \wh{g}),
\]
\[
    \mathcal{F}^{-1}(-\sigma^2\psi(\sigma)\wh\Box(\sigma)^{-1} \wh{h})
    = \pa_{\ts}^2 \mathcal{F}^{-1}(\psi(\sigma)\wh\Box(\sigma)^{-1} \wh{h}) .
\]
Combining these with the fact that $\sigma^3 \A^{1-,1-,0}(X_{\rm{res}}^{\pminus})$ terms have inverse Fourier transforms that are bounded by $\mathcal{O}(\ts^{-4})$ together with all their derivatives in $\ts\pa_{\ts}$ and $\pa_x$, we conclude that for $x\in K\Subset X^\circ$,
\begin{equation}
\label{phi1 asymp Sec 6}
    |\pa_{\ts}^j \pa_x^\beta \phi_1 (\ts,x)|\leq C_{j\beta K} t_*^{-4-j+},\quad \forall\, j\in\mathbb{N},\ \beta\in\mathbb{N}^3 .
\end{equation} 

The high frequency part $\phi_2$ is harmless, as one can show that \eqref{phi2 rapid decay} still holds for the initial value problems \eqref{initial value problem Sec 6} with $p\geq 4$. It suffices to observe that our forcing term $f(\ts,x)$ given in \eqref{the forcing f Sec 6} still satisfies the estimate \eqref{eqn:forcing f decay}. In fact, it has better decay:
\[
    \pa_{t_*}^k(\ts\pa_{t_*})^m \Gamma_b^I f(\ts,x) = \mathcal{O}(\la t_*\ra^{-2p-k} \la t_*+2r\ra^{-p});
\]
we can therefore repeat the steps in subsection \ref{subsubsection:high frequency} to obtain 
\begin{equation}
\label{phi2 rapid decay Sec 6}
    \phi_2(t_*,x) \in \mathscr{S}(\mathbb{R}_{t_*};\A^{1-}(X)).
\end{equation}
Combining \eqref{phi0 asymp Sec 6}, \eqref{phi1 asymp Sec 6} and \eqref{phi2 rapid decay Sec 6}, we obtain \eqref{phi asymp spatial compact highorder}, which completes the proof of Theorem \ref{thm:highorder}.

\begin{remark}
\label{rmk:p=4 or larger}
Our approach actually allows us to establish Theorem \ref{thm:highorder} for more general initial data than the smooth and compactly supported functions as in \eqref{initial data finer assumption}. More specifically, our proof can be extended \textit{verbatim} to the case where 
\[
    \begin{gathered}
        u_0(x) - u_{0}^1(\omega)\rho - u_{0}^2(\omega)\rho^2 - u_{0}^3(\omega)\rho^3 \in \A^{4-}(X),
        \quad
        u_1(x) - u_{1}^1(\omega)\rho - u_{1}^2(\omega)\rho^2 \in \A^{3-}(X),
    \end{gathered}
\]
with $u_0^1, u_0^2, u_0^3, u_1^1, u_1^2 \in\CI(\S^2)$, under the assumptions (a) an integral condition $$   
\int_{\S^2} (2u_0^2(\omega)+\tilde{g}(\omega) u_1^1(\omega))\,d\omega = 0$$ on the sphere involving $u_{0}^2$, $u_{1}^1$ and $\tilde g$ holds, and (b) the solution to the corresponding initial value problem \eqref{initial value problem Sec 6} satisfies the \textit{a priori} estimate \eqref{phi conormal est high order} (or even the weaker \textit{a priori} estimate \eqref{phi conormal est}).
Assumption (a) on $u_0^2,\, u_1^1$ guarantees the validity of the key property \eqref{c(omega) vanishing avg Sec 6}, with the appearance of angular coefficients from the initial data in the formula \eqref{eq:c(omega) defn Sec 6}, of $c(\omega)$.
\end{remark}

\appendix
\section{Global existence and pointwise decay theorem from previous work for the power-type nonlinearities that are cubic and higher order}\label{app:GE}

The sharp decay estimates used as an assumption in this paper for power-type nonlinearities on asymptotically flat spacetimes have been established under specific smoothness and decay conditions on the initial data. Additionally, there are assumptions regarding the asymptotic flatness of the coefficients of the operator. We first state the result for asymptotically Minkowski spacetimes, followed by another result for black hole spacetimes.

We assume the existence of a global solution throughout this work. Suppose we are in the defocusing, constant-coefficient case $a \equiv1, b\equiv1$. Then in stationary spacetimes $g = g(x) $, the global existence of solutions for defocusing power-type nonlinearities with large initial data can be established by leveraging a combination of local-in-time Strichartz estimates and the conservation of the energy associated to \eqref{IVP intro} and \eqref{initial value problem Sec 6}. These tools and estimates are well-known. %
In the energy critical case where $p=5$, global existence for the initial value problem with large initial data can be proven by performing an estimate in the backward cone (domain of dependence) with an apex at a fixed time slice. Specifically, if $\|u\|_{L^6_x(\text{slices})} \to 0$ as one approaches the apex along constant-time slices within the cone, then global existence follows. %
Global existence is known for focusing $a\equiv-1, b \equiv -1$ power-type nonlinearities with sufficiently small initial data. %
In the variable-coefficient case with $a(t_*,x)$ and $b(t_*,x)$, global existence considerations would be related to the sign properties of these functions.

\begin{theorem}[Corollary of results from \cite{Looi22,Looi22.2}]\label{thm:power_nonlinearity}
Let the coefficient function $a(t_*, x)$ be smooth and satisfy the following estimates for all integers $k \in \N$ and multi-indices $I$:
\begin{equation}
\label{a(t*,x) estimate in app}
    |\pa_{\ts}^k \mathcal{V}_X^I a(\ts,x)|\leq C_{k, I} \la \ts \ra^{-k},\quad\forall k,I, \quad \mathcal{V}_X\in \mathrm{span}\{\pa_{r}, \pa_\rho, \Omega_1,\Omega_2,\Omega_3\}.
\end{equation}
Let $\phi$ be a global solution to the initial value problem
\begin{equation}
\begin{cases}
\Box_g \phi = a(t_*,x) \phi^p, & \quad p \geq 3 \textrm{ an integer}, \\
(\phi, \partial_{t_*}\phi)|_{t_*=0} = (\phi_0, \phi_1),
\end{cases}
\end{equation}
with initial data $(\phi_0, \phi_1) \in \mathcal{C}_c^\infty(\mathbb{R}^3) \times \mathcal{C}_c^\infty(\mathbb{R}^3)$. The existence of such a global solution is assumed, and is known to hold under standard conditions, such as a defocusing condition on $a(t_*,x)$ (e.g., constant $a \ge 0$) or a smallness assumption on the initial data in the focusing case.
Then $\phi$ satisfies the following decay estimate in $(t_*,x)$ coordinates:
\begin{equation}
|(\langle t_*\rangle\partial_{t_*})^k \Gamma_b^I \phi(t_*,x)| \leq C_{Ik} \langle t_*+2|x| \rangle^{-1} \langle t_*\rangle^{-\min(2,p-2)}.
\label{eq:A.2}
\end{equation}
\end{theorem}

Various cases of this result were proven in \cite{Looi22,Looi22.2} for a broad class of asymptotically Minkowski spacetimes: potentials and first-order terms were allowed. The work \cite{Looi22.2}, which derived decay estimates for a wider range of nonlinearities, also addressed some cases with variable coefficients in the nonlinearity terms. In \cite{Looi22}, the bound \eqref{eq:A.2} was proved for large initial data and $p=5$ and with $a \equiv 1$. The bounds for the power-type nonlinearities with $a\equiv1$ proved in \cite{Looi22,Looi22.2} are sharp. Since only a particular form of asymptotic behavior (as $r\to\infty$, or equivalently $\rho\to0$) of the metric is relevant here, we have restricted to that case when stating this theorem, omitting the statement for more general linear operators than $\Box_g$. A distinction from \cite{Toh2022} is that the results in \cite{Looi22,Looi22.2} apply to a large class of asymptotically Minkowski spacetimes and, in the case of the quintic nonlinearity ($p=5$) in \cite{Looi22}, allow for large initial data. %

The same bound as \eqref{eq:A.2} is proved for the $Z$ vector fields\footnote{We will introduce this space in the next section. In the references \cite{Looi22, Looi22.2, Toh2022}, the space of $ Z $-controlled functions, denoted $ S^Z $, is used. }. 
The proof of the estimate involving $k \in \mathbb{N}$ in \eqref{eq:A.2} proceeds as follows. Bounds are established for the solution and all its $ Z $-vector fields, including the scaling field $ S $. Additionally, it is shown that in the $(t, r, \omega)$ coordinate system, the radial derivative of the solution gains a factor of $ 1/(1+r) $. By combining these two results, the $\partial_t$ derivative of the solution thereby gains a factor of $ 1/(1+t) $. This gives us the part of \eqref{eq:A.2} involving $k \in \N$.

The main theorem of \cite{Toh2022}, which established global existence and pointwise bounds for solutions to the semilinear wave equations on Kerr backgrounds with small angular momenta, is the following:

\begin{theorem}[Theorem from \cite{Toh2022}]\label{thm:Toh2022}
Let $p \geq 3$ be an integer and $(\mathcal{M}, g)$ be a Kerr spacetime. Define $\mathcal{M} = \{ \tilde{t} \geq 0, r \geq r_e \}$, where $r_e$ is the event horizon radius. For any $T \geq 0$, let $\Sigma(T) = \mathcal{M} \cap \{ \tilde{t} = T \}$ and $\Sigma^- = \Sigma(0)$. 

Consider the Cauchy problem for the semilinear wave equation:
\begin{equation}
\begin{cases}
\Box_K \phi = \pminus \phi^p \\
\phi |_{\Sigma^-} = \phi_0, \quad \tilde{T} \phi |_{\Sigma^-} = \phi_1
\end{cases}
\end{equation}
where $\Box_K$ is the d'Alembertian on the Kerr spacetime, and $\tilde{T}$ is the future-directed unit normal to $\Sigma^-$.

Let $\kappa = \min\{2, p-2\}$. For any fixed $m \in \mathbb{N}$ and $R_1 > r_e$, there exist $N \gg m$ and $\varepsilon > 0$ such that for all initial data $(\phi_0, \phi_1)$ supported in $\{r_e \leq r \leq R_1\}$ satisfying
\[
\|\phi_0\|_{H^{N+1}(\Sigma^-)} + \|\phi_1\|_{H^N(\Sigma^-)} \leq \varepsilon,
\]
such that $\phi$ exists globally on $\mathcal{M}$. Moreover, for all $0 \leq |\alpha| \leq m$, 
\[
|Z^\alpha \phi(\tilde{t},r,\theta,\varphi)| \lesssim \varepsilon \langle \tilde{t} \rangle^{-1} \langle \tilde{t} - \tilde{r} \rangle^{-\kappa},
\]
where $\langle a \rangle = \sqrt{2 + |a|^2}$, and $\tilde{r}$ is a modified radial coordinate equal to $r$ in a compact region and asymptotic to the Regge-Wheeler coordinate $r^*$ near spatial infinity. 
\end{theorem}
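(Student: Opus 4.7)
The plan is to combine a robust linear decay theory for $\Box_K$ on slowly rotating Kerr with a Klainerman-style vector field bootstrap. At the linear level, for solutions of $\Box_K u = F$ I would use: (i) integrated local energy decay uniformly down to the horizon for small $|a|$, proved by commuting with the Killing fields $\partial_{\tilde t}, \partial_\varphi$ and with a second-order operator built from the approximate Carter tensor to overcome trapping at the photon sphere; (ii) the $r^p$-weighted energy hierarchy of Dafermos--Rodnianski on the $\tilde t = \mathrm{const}$ foliation together with the red-shift estimate at the horizon, converting ILED into polynomial decay of weighted energies; and (iii) a Klainerman--Sobolev-type embedding using the admissible set $Z \in \{\partial_{\tilde t}, \Omega_1, \Omega_2, \Omega_3, \tilde S\}$, where $\tilde S$ is a Kerr-adapted modification of $\tilde t\,\partial_{\tilde t} + \tilde r\,\partial_{\tilde r}$ chosen so that $[\Box_K, \tilde S]$ has coefficients decaying in $r$ and is regular at the horizon. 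Together these ingredients yield the target linear pointwise bound $|Z^\alpha u| \lesssim \langle \tilde t \rangle^{-1} \langle \tilde t - \tilde r \rangle^{-\kappa}$, provided that $F$ satisfies an integrability condition controlled by higher-order $Z^\beta F$ norms.

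The second step is the bootstrap. On a maximal existence interval $[0,T^*)$, assume
\[
|Z^\alpha\phi(\tilde t,r,\theta,\varphi)| \le C\varepsilon\,\langle\tilde t\rangle^{-1}\langle\tilde t-\tilde r\rangle^{-\kappa},\quad |\alpha|\le m,
\]
plug into $\pm\phi^p$, and distribute $Z^\alpha$ by Leibniz (the collection $Z$ is closed under commutators modulo lower-order operators). This gives $|Z^\alpha(\phi^p)| \lesssim (C\varepsilon)^p \langle\tilde t\rangle^{-p}\langle\tilde t-\tilde r\rangle^{-p\kappa}$, and since $p\ge 3$ the exponents are large enough that this source satisfies the integrability required by (i)--(iii). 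Feeding back produces the improved bound $|Z^\alpha\phi| \le \tfrac12 C\varepsilon \langle\tilde t\rangle^{-1}\langle\tilde t-\tilde r\rangle^{-\kappa}$ once $\varepsilon$ is small, closing the bootstrap. Global existence on $\mathcal{M}$ then follows from the standard continuation argument; the compact support and $H^{N+1}\times H^N$ smallness of $(\phi_0,\phi_1)$, combined with local well-posedness, furnish finite $Z^\alpha$-norms at $\tilde t=0$ for all $|\alpha|\le m$ with $N$ chosen large enough to absorb the trapping-induced derivative loss, which is how the requirement $N\gg m$ is quantified.

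The main obstacle is trapping at the Kerr photon sphere: any ILED estimate loses derivatives there, forcing one to close the bootstrap at high regularity and making the commutator construction with Carter-type second-order operators technically delicate—it is here that slow rotation is essential, so that the geometry is perturbative around Schwarzschild and the Andersson--Blue commutator strategy applies. A second difficulty, specific to the critical case $p=3$ with $\kappa=1$, is that the wave-zone decay $\langle\tilde t-\tilde r\rangle^{-3}$ of the cubic source is only marginal; one must exploit the null structure built into the $r^p$-hierarchy (which produces $\langle\tilde t-\tilde r\rangle$-weights rather than plain $r$-weights) to harvest the extra decay along outgoing null cones needed to recover the $\kappa=1$ rate with the right constant. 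Finally, superradiance and the Kerr-adapted $\tilde S$ must be reconciled with the red-shift near the horizon so that boundary contributions on $\{r=r_e\}$ carry a good sign, which is again where smallness of $|a|$ is used.
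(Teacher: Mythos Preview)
The paper does not contain a proof of this statement: Theorem~\ref{thm:Toh2022} is quoted in Appendix~A as a result of Tohaneanu \cite{Toh2022}, with no argument supplied beyond the citation. So there is no ``paper's own proof'' to compare against; your proposal is a sketch of how the cited reference proceeds, and on that level it is broadly accurate. The architecture you describe---ILED on Kerr via Carter-tensor commutators (small $|a|$), the Dafermos--Rodnianski $r^p$ hierarchy plus red-shift, Klainerman--Sobolev with the $Z$ family, and a small-data bootstrap---is indeed the skeleton of \cite{Toh2022} and of the companion works \cite{Looi22,Looi22.2} that the appendix also cites.

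One point in your outline is imprecise. For $p=3$ you invoke ``null structure built into the $r^p$-hierarchy'' to close the bootstrap. The cubic $\phi^3$ has no null structure; what actually saves the $p=3$ case is that the source inherits the pointwise bound $\langle\tilde t\rangle^{-3}\langle\tilde t-\tilde r\rangle^{-3}$ from the bootstrap assumption, which is already time-integrable (the $\langle\tilde t\rangle^{-3}$ factor alone gives $\int \langle\tilde t\rangle^{-3}\,d\tilde t<\infty$) and feeds back through the $r^p$/ILED machinery to reproduce $\kappa=1$. The $r^p$ method supplies the $\langle\tilde t-\tilde r\rangle$-weighted decay, but this is a quantitative decay mechanism, not a null-form cancellation. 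Apart from this wording issue, your identification of trapping-induced derivative loss as the reason for $N\gg m$, and of slow rotation as the hypothesis enabling the Andersson--Blue commutators, is correct.
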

The coordinate $\tilde{t}$ is chosen such that the hypersurface $\{\tilde{t} = 0\}$ is spacelike and coincides with the standard Boyer-Lindquist time coordinate $t$ away from the black hole. The theorem holds for both focusing and defocusing nonlinearities.

\subsection{Vector Fields and Function Spaces}
\label{app:vector_field_methods}

\begin{definition}[Vector fields]\label{def:vector_fields}
On $\mathbb{R}^{1+3}$, we denote by $Z$ the full set of vector fields, comprising the translations $\partial = (\partial_t, \partial_i)$, the rotations $\Omega = \{x^i \partial_j - x^j \partial_i\}$, and the scaling vector field $S = t\partial_t + x^i \partial_i$.
\end{definition}

\begin{definition}[Function spaces]
Let $f$ be a positive function on $X:=[1,\infty)_r \times \mathbb{S}^2$ and $Z$ denote the set of vector fields comprising partial derivatives, rotations $x^i\partial_j - x^j\partial_i$, and the scaling field $t\partial_t + \sum_i x^i\partial_i$.

\begin{enumerate}
    \item The space of $Z$-controlled functions is defined as
    $S^Z(f) := \{ g \in \CI(\mathbb{R}_{t_*} \times X) : |Z^\alpha g| \leq c_\alpha f, \quad \forall \alpha \},$
    where $c_\alpha$ are constants and $\alpha$ is a multi-index.
    
    \item The space of stationary $Z$-controlled functions is:
    $S^Z_{\text{stat}}(f) := \{ g \in \CI(X) : |Z^\alpha g| \leq c_\alpha f, \quad \forall \alpha \}.$
\end{enumerate}
\end{definition}

\begin{example}
An element of $\CI(X)$ can be expressed as
$a(x) = \sum_{j=0}^\infty r^{-j} a_j(\omega), r \geq 1,$
with $a_j \in \CI(\mathbb{S}^2)$ satisfying appropriate decay conditions. For instance, the defocusing and focusing nonlinear wave equations with constant coefficient nonlinearity correspond to $a = \pminus 1$, with $a_0 = \pminus 1$ and $a_j = 0$ for all $j \geq 1$.
\end{example}

\begin{proposition}[Relationships between function spaces]
\label{prop:function_space_relations}
For functions defined on $X$, we have $\CI(X) \subset S^Z_{\text{stat}}(1) \subset \mathcal{A}^0(X)$ and $S^Z_{\text{stat}}(r^{-\alpha}) = \mathcal{A}^\alpha(X)$ for $\alpha \geq 0$.
\end{proposition}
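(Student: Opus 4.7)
The plan is to reduce every statement to the single geometric observation that on stationary functions the scaling field $S = t\partial_t + \sum_i x^i\partial_i$ acts as the radial b-vector field $r\partial_r$, and that the Cartesian derivatives $\partial_i$, when restricted to $\{r\geq 1\}$, can be written as $r^{-1}$ times a b-vector field. Concretely, I would use the decomposition $\partial_i = \omega^i \partial_r + r^{-1} V_i$, where $V_i$ is a smooth combination of the rotational fields $\Omega_{jk}$, and observe that $r\partial_r$ and the $\Omega_{jk}$ together span $\cV_{\mathrm b}(X)$ at points of $\{r\geq 1\}$.

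For part (1), take $u \in \CI(X)$ in the sense of smoothness on the radial compactification $\overline{\RR^3}$. Then $u$ is smooth in $(\rho,\omega)$ with $\rho = r^{-1}$ near infinity, and in particular $(r\partial_r)^j \Omega^\beta u = (-\rho\partial_\rho)^j \Omega^\beta u$ is bounded on $X$ for every $j,\beta$. Since on stationary functions $S = r\partial_r$ and since the Cartesian $\partial_i$ and $\Omega_{jk}$ are b-vector fields (resp.\ $r^{-1}$ times b-vector fields), iterated application of any word in $Z$ still produces an expression built from the b-fields $r\partial_r, \Omega_{jk}$ with coefficients in $\CI(X)$; all such expressions are bounded. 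This gives $\CI(X)\subset S^Z_{\mathrm{stat}}(1)$. For $S^Z_{\mathrm{stat}}(1)\subset \cA^0(X)$, observe that the b-vector fields $r\partial_r$ and $\Omega_{jk}$ are themselves members of $Z$ (after using stationarity to identify $S=r\partial_r$), so membership in $S^Z_{\mathrm{stat}}(1)$ immediately gives the bounds $|(r\partial_r)^j \Omega^\beta u| \leq C_{j,\beta}$ that define $\cA^0(X)$.

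For part (2), the inclusion $S^Z_{\mathrm{stat}}(r^{-\alpha})\subset \cA^\alpha(X)$ is immediate by the same identification. For the reverse inclusion, let $u\in\cA^\alpha(X)$ and consider an arbitrary word $Z^\gamma$ in $\{\partial_i,\Omega_{jk},S\}$. Using stationarity to replace every $S$ by $r\partial_r$ and using $\partial_i = r^{-1}(\omega^i\, r\partial_r + V_i)$, I would commute all factors of $r^{-1}$ to the left. The relevant commutation identities are $[r\partial_r, r^{-1}] = -r^{-1}$ and $[\Omega_{jk}, r^{-1}] = 0$, which only produce further factors of $r^{-1}$ and smooth coefficients depending on $\omega$; hence $Z^\gamma u = r^{-N_\gamma} P_\gamma u$ for some non-negative integer $N_\gamma$ and some $P_\gamma \in \mathrm{Diff}_{\mathrm b}^{|\gamma|}(X)$ (with coefficients in $\CI(X)$). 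By definition of $\cA^\alpha(X)$ the b-operator $P_\gamma$ preserves the bound $|P_\gamma u|\leq C_\gamma r^{-\alpha}$, and since $r\geq 1$ we may absorb the extra $r^{-N_\gamma}$ to conclude $|Z^\gamma u|\leq C_\gamma r^{-\alpha}$.

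The main bookkeeping obstacle is to verify cleanly that the commutator manipulations above produce only $r^{-1}$-factors (and not logarithmic losses or uncontrolled angular coefficients), and to check the base case $\alpha=0$ where the $r^{-1}$ factors cannot be absorbed as improvements but also do not degrade the bound. Both concerns are handled by the explicit commutation identities $[r\partial_r,r^{-k}]=-k\,r^{-k}$ and $[\Omega,r^{-k}]=0$, and by noting that the rotational fields $\Omega_{jk}$ act only tangentially and preserve $\CI(\Sph^2)$ coefficients. Once these identities are assembled into a clean inductive statement on the length of $\gamma$, both inclusions follow routinely.
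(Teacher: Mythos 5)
Your argument is correct and follows the same route as the paper's (much terser) proof: identify $S$ with $r\partial_r$ on stationary functions, note that the $Z$ vector fields then contain the b-fields $r\partial_r,\Omega$ defining $\mathcal{A}^\alpha(X)$, and absorb the extra $r^{-1}$ factors coming from $\partial_i$ using $r\geq 1$. The paper states this in two sentences; your version simply makes the commutator bookkeeping explicit.
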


\begin{proof}
The inclusions in (1) follow from the definitions, noting that $Z$ applied to stationary functions includes all vector fields defining $\mathcal{A}^\alpha(X)$. The equality in (2) holds because for $\alpha \geq 0$, both spaces impose equivalent decay conditions.
\end{proof}

\begin{remark}\label{rem:b_vector_fields}
Control over $Z$ vector fields up to order $N \in \mathbb{N}$ implies control over all b-vector fields of the same order, as b-vector fields are generated by $\{r\partial_r, \Omega\}$, which are contained in the span of $Z$ when restricted to stationary functions.
\end{remark}


\begin{thebibliography}{99}
\bibitem{AB} L. Andersson and P. Blue,
\newblock {Hidden symmetries and decay for the wave equation on the Kerr spacetime},
\newblock Ann. of Math. (2), \textbf{182}(3):787--853, 2015.

\bibitem{Angelopoulos:2018uwb} Y. Angelopoulos, S. Aretakis, and D. Gajic, 
\newblock{Late-time asymptotics for the wave equation on extremal Reissner--Nordström backgrounds}, 
\newblock Adv. Math., \textbf{375}:107363, 2020.

\bibitem{Angelopoulos2018} Y. Angelopoulos, S. Aretakis, and D. Gajic, 
\newblock {Late-time asymptotics for the wave equation on spherically symmetric, stationary spacetimes},
\newblock Adv. Math., \textbf{323}:529--621, 2018.

\bibitem{Angelopoulos2021a} Y. Angelopoulos, S. Aretakis, and D. Gajic, 
\newblock {Late-time tails and mode coupling of linear waves on Kerr spacetimes},
\newblock Advances in Mathematics, \textbf{417}, 108939. %

\bibitem{Angelopoulos2021b} Y. Angelopoulos, S. Aretakis, and D. Gajic, 
\newblock {Price's law and precise late-time asymptotics for subextremal Reissner-Nordström black holes},
\newblock Ann. Henri Poincaré \textbf{24}, 3215–3287 (2023). 

\bibitem{Baskin2022} D. Baskin, J. Gell-Redman, and J. L. Marzuola,
\newblock Price’s law on Minkowski space in the presence of an inverse square potential,
\newblock arXiv:2207.06513, preprint (2022).

\bibitem{Baskin2015} D. Baskin, A. Vasy, and J. Wunsch,
\newblock Asymptotics of radiation fields in asymptotically Minkowski space,
\newblock Amer. J. Math. \textbf{137}(5), 1293–136

\bibitem{Baskin2018} D. Baskin, A. Vasy, and J. Wunsch,
\newblock Asymptotics of scalar waves on long-range asymptotically Minkowski spaces,
\newblock Adv. Math. \textbf{328}, 160–216 (2018). 

\bibitem{Bizon2007a} P. Bizoń, T. Chmaj, and A. Rostworowski, 
\newblock {Asymptotic stability of the Skyrmion},
\newblock Phys. Rev. D, \textbf{75}:121702, 2007. %

\bibitem{Bizon2007b} P. Bizoń, T. Chmaj, and A. Rostworowski, 
\newblock {Late-time tails of a Yang-Mills field on Minkowski and Schwarzschild backgrounds},
\newblock Class. Quantum Grav., \textbf{24}(13):F55--F63, 2007. %

\bibitem{Bizon2009a} P. Bizoń, T. Chmaj, and A. Rostworowski, 
\newblock {Late-time tails of a self-gravitating massless scalar field, revisited},
\newblock Class. Quantum Grav., \textbf{26}(17):175006, 2009. %

\bibitem{Bizon2009b} P. Bizoń, T. Chmaj, and A. Rostworowski, 
\newblock {Reply to Comment on 'Late-time tails of a self-gravitating massless scalar field, revisited'},
\newblock Class. Quantum Grav., \textbf{26}(24):248002, 2009. %

\bibitem{Bizon2010} P. Bizoń and A. Rostworowski, 
\newblock {Note about late-time wave tails on a dynamical background},
\newblock Phys. Rev. D, \textbf{81}:084047, 2010. %

\bibitem{BH} J.-F. Bony and D. Häfner,
\newblock {Low frequency resolvent estimates for long range perturbations of the Euclidean Laplacian}, 
\newblock Math. Res. Lett., \textbf{17}(2):303–308, 2010. %

\bibitem{BoucletBurq2021} J.-M. Bouclet and N. Burq,
\newblock Sharp resolvent and time-decay estimates for dispersive equations on asymptotically Euclidean backgrounds,
\newblock Duke Math. J. \textbf{170}(11), 2575-2629 (2021). %

\bibitem{Ch} D. Christodoulou, 
\newblock {Global solutions of nonlinear hyperbolic equations for small initial data},
\newblock Comm. Pure Appl. Math., \textbf{39}(2):267--272, 1986.

\bibitem{Dafermos2021} M. Dafermos, G. Holzegel, I. Rodnianski, and M. Taylor,
\newblock { The non-linear stability of the Schwarzschild family of black holes},
\newblock arXiv:2104.08222, preprint, 2021.

\bibitem{Dafermos2022} M. Dafermos, G. Holzegel, I. Rodnianski, and M. Taylor,
\newblock { Quasilinear wave equations on asymptotically flat spacetimes with applications to Kerr black holes},
\newblock arXiv:2212.14093, preprint, 2022.


\bibitem{DafermosLuk2017} M. Dafermos and J. Luk, 
\newblock {The interior of dynamical vacuum black holes I: The $C^0$ stability of the Kerr Cauchy horizon},
\newblock arXiv:1710.01722, preprint, 2017. %

\bibitem{DafermosRodnianski2005} M. Dafermos and I. Rodnianski, 
\newblock {A proof of Price's law for the collapse of a self-gravitating scalar field},
\newblock Invent. Math., \textbf{162}(2):381--457, 2005.

\bibitem{DR rp} M. Dafermos and I. Rodnianski,
\newblock {A new physical-space approach to decay for the wave equation with applications to black hole spacetimes},
\newblock XVIth International Congress on Mathematical Physics, P. Exner (ed.), World Scientific, London, 2009, pp. 421--433.

\bibitem{DR09} M. Dafermos and I. Rodnianski,
\newblock {The red-shift effect and radiation decay on black hole spacetimes},
\newblock Communications on Pure and Applied Mathematics, \textbf{62}(7):859--919, 2009.

\bibitem{DR10} M. Dafermos and I. Rodnianski,
\newblock {Decay for solutions of the wave equation on Kerr exterior spacetimes I-II: The cases $|a| \ll M$ or axisymmetry},
\newblock Preprint, arXiv:1010.5132, 2010.

\bibitem{DR11} M. Dafermos and I. Rodnianski,
\newblock {A proof of the uniform boundedness of solutions to the wave equation on slowly rotating Kerr backgrounds},
\newblock Inventiones mathematicae, \textbf{185}(3):467--559, 2011.

\bibitem{DR13} M. Dafermos and I. Rodnianski,
\newblock {Lectures on black holes and linear waves},
\newblock in Evolution equations, Clay Mathematics Proceedings, Vol. 17, Amer. Math. Soc., Providence, RI, 97--205, 2013.

\bibitem{DRS} M. Dafermos, I. Rodnianski and Y. Shlapentokh-Rothman,
\newblock {Decay for solutions of the wave equation on Kerr exterior spacetimes III: The full subextremal case $|a| < M$},
\newblock Ann. of Math, \textbf{183} (2016), 787-913.

\bibitem{Donninger2011} R. Donninger, W. Schlag, and A. Soffer, 
\newblock {A proof of Price's law on Schwarzschild black hole manifolds for all angular momenta},
\newblock Adv. Math., \textbf{226}(1):484--540, 2011.

\bibitem{Donninger2012} R. Donninger, W. Schlag, and A. Soffer, 
\newblock {On pointwise decay of linear waves on a Schwarzschild black hole background},
\newblock Comm. Math. Phys., \textbf{309}(1):51--86, 2012.

\bibitem{Dyatlov2015} S. Dyatlov,
\newblock Asymptotics of linear waves and resonances with applications to black holes,
\newblock Comm. Math. Phys. \textbf{335}(3), 1445-1485 (2015). %

\bibitem{DyatlovSpectralGaps} S.~Dyatlov,
\newblock Spectral gaps for normally hyperbolic trapping,
\newblock Ann. Inst. Fourier (Grenoble), \textbf{66}(1):55--82, 2016.

\bibitem{FinsterKamranSmollerYau2006} F. Finster, N. Kamran, J. Smoller, and S.-T. Yau,
\newblock Decay of solutions of the wave equation in the Kerr geometry,
\newblock Commun. Math. Phys. \textbf{264}(2), 465-503 (2006).%


\bibitem{Gajic2024} D. Gajic and M. Van de Moortel,
\newblock Polynomial time decay for solutions of the Klein–Gordon equation on a subextremal Reissner–Nordström black hole,
\newblock arXiv:2401.13047, preprint (2024). %

\bibitem{Gajic2023} D. Gajic,
\newblock Late-time asymptotics for geometric wave equations with inverse-square potentials,
\newblock J. Funct. Anal. \textbf{285}(7), No. 110058, 114 (2023). %

\bibitem{GeorgievLindbladSogge1997} V. Georgiev, H. Lindblad, and C. D. Sogge,
\newblock {Weighted Strichartz estimates and global existence for semilinear wave equations},
\newblock Amer. J. Math., \textbf{119}(6):1291--1319, 1997.

\bibitem{Giorgi2022a} E. Giorgi, S. Klainerman, and J. Szeftel,
\newblock Wave equations estimates and nonlinear stability of slowly rotating Kerr black holes,
\newblock arXiv:2205.14808, preprint (2022).

\bibitem{Gri} M. Grillakis,
\newblock {Regularity and Asymptotic Behavior of the Wave Equation with a Critical Nonlinearity},
\newblock Annals of Math. \textbf{132}, 3, 1990, 485-509. %

\bibitem{GuillarmouHassell2008} C. Guillarmou and A. Hassell,
\newblock Resolvent at low energy and Riesz transform for Schrödinger operators on asymptotically conic manifolds. I,
\newblock Math. Ann. \textbf{341}(4), 859-896 (2008).

\bibitem{GuillarmouHassell2009} C. Guillarmou and A. Hassell,
\newblock Resolvent at low energy and Riesz transform for Schrödinger operators on asymptotically conic manifolds. II,
\newblock Ann. Inst. Fourier (Grenoble) \textbf{59}(4), 1553-1610 (2009).

\bibitem{GuillarmouHassellSikora2013} C. Guillarmou, A. Hassell, and A. Sikora,
\newblock Resolvent at low energy III: The spectral measure,
\newblock Trans. Amer. Math. Soc. \textbf{365}(11), 6103-6148 (2013).


\bibitem{Gundlach1994a} C. Gundlach, R. H. Price, and J. Pullin, 
\newblock {Late-time behavior of stellar collapse and explosions. I. Linearized perturbations},
\newblock Phys. Rev. D, \textbf{49}:883--889, 1994.

\bibitem{Gundlach1994b} C. Gundlach, R. H. Price, and J. Pullin, 
\newblock {Late-time behavior of stellar collapse and explosions. II. Nonlinear evolution},
\newblock Phys. Rev. D, \textbf{49}:890--899, 1994.


\bibitem{Haefner2021} D. Häfner, P. Hintz, and A. Vasy,
\newblock Linear stability of slowly rotating Kerr black holes,
\newblock Invent. Math. \textbf{223}(3), 1227–1406 (2021). %


\bibitem{Hin22} P. Hintz, 
\newblock {A sharp version of Price's law for wave decay on asymptotically flat spacetimes},
\newblock Comm. Math. Phys., \textbf{389}(1):491--542, 2022.

\bibitem{Hintz2023} P. Hintz, 
\newblock {Linear waves on asymptotically flat spacetimes. I},
\newblock arXiv:2302.14647, preprint, 2023.


\bibitem{Hintz2023a} P. Hintz and A. Vasy,
\newblock Microlocal analysis near null infinity in asymptotically flat spacetimes,
\newblock arXiv:2302.14613, preprint.

\bibitem{Hintz2023b} P. Hintz and A. Vasy,
\newblock Linear waves on asymptotically flat spacetimes. II. In preparation,
\newblock 2024.

\bibitem{IbrahimMajdoub} S. Ibrahim, M. Majdoub,
\newblock {Solutions globales de l’équation des ondes semi-linéaire critique à coefficients variables},
\newblock Bull. Soc. math. France \textbf{131} vol. 1 (2003), 1–22.

\bibitem{John} F. John,
\newblock {Blow-up of solutions of nonlinear wave equations in three space dimensions},
\newblock Manuscripta Math. \textbf{28} (1979), no. 1-3, 235--268. %


    \bibitem{Ka} I. Kadar, 
    \newblock{Small data nonlinear wave equation numerology: The role of asymptotics}, 
    \newblock preprint, arXiv:2302.07312 

\bibitem{K1} S. Klainerman, 
\newblock {Long time behaviour of solutions to nonlinear wave equations},
\newblock In Proceedings of the International Congress of Mathematicians, Vol. 1, 2 (Warsaw, 1983), pages 1209--1215.

\bibitem{K2} S. Klainerman, 
\newblock {The null condition and global existence to nonlinear wave equations},
\newblock Nonlinear systems of partial differential equations in applied mathematics, Part 1 (Santa Fe, N.M., 1984), 293--326,  Lectures in Appl. Math., 23, Amer. Math. Soc., Providence, RI, 1986.

\bibitem{K3} S. Klainerman,
\newblock {Global existence for nonlinear wave equations},
\newblock Comm. Pure Appl. Math. \textbf{33}, 1980, p. 43-101.

\bibitem{KP} S. Klainerman, G. Ponce,
\newblock {Global, small amplitude solutions to nonlinear evolution equations},
\newblock Comm. Pure Appl. Math. \textbf{36} (1983), 133--141.

\bibitem{KS} S. Klainerman and J. Szeftel. 
\newblock {Global nonlinear stability of Schwarzschild spacetime under polarized perturbations}, \newblock Annals of Mathematics Studies. \textbf{210} Princeton University Press, Princeton, NJ, 2020. %

\bibitem{KS1} S. Klainerman and J. Szeftel. 
\newblock{Kerr stability for small angular momentum}
\newblock arXiv:2104.11857, preprint, 2021.%


\bibitem{LaiZhou} N. Lai, Y. Zhou,
\newblock {Global existence of critical nonlinear wave equation with time dependent variable coefficients},
\newblock Comm. Part. Diff. Eqns \textbf{37} (2012), 1913–1939.

\bibitem{Lax1963} P. D. Lax, C. S. Morawetz, and R. S. Phillips,
\newblock Exponential decay of solutions of the wave equation in the exterior of a star-shaped obstacle,
\newblock Comm. Pure Appl. Math. \textbf{16}, 477–486 (1963).

\bibitem{LindbladTohaneanu2018} H. Lindblad and M. Tohaneanu. 
\newblock{Global existence for quasilinear wave equations close to Schwarzschild}, 
\newblock Comm. Partial Differential Equations, \textbf{43}(6):893–944, 2018.

\bibitem{LindbladTohaneanu2020} H. Lindblad and M. Tohaneanu,
\newblock A local energy estimate for wave equations on metrics asymptotically close to Kerr,
\newblock Ann. Henri Poincare \textbf{21}(11), 3659-3726 (2020).

\bibitem{Looi21} S. Looi, 
\newblock {Pointwise decay for the wave equation on nonstationary spacetimes},
\newblock Journal of Mathematical Analysis and Applications, \textbf{527}(1), Part 1, 1 November 2023, 126939.


\bibitem{Looi22} S. Looi, 
\newblock {Pointwise decay for the energy-critical nonlinear wave equation},
\newblock preprint 2022, arXiv:2205.13197.

\bibitem{Looi22.2} S. Looi, 
\newblock {Decay rates for cubic and higher order nonlinear wave equations on asymptotically flat spacetimes},
\newblock preprint 2022, arXiv:2207.10280.

\bibitem{Looi22.3} S. Looi, 
\newblock {Improved decay for quasilinear wave equations close to asymptotically flat spacetimes including black hole spacetimes},
\newblock preprint 2022, arXiv:2208.05439.

\bibitem{Looi2023} S. Looi,
\newblock Asymptotic behaviour of hyperbolic partial differential equations,
\newblock PhD thesis, 2023.
\newblock Available at: \url{https://uknowledge.uky.edu/math_etds/101}

\bibitem{Looi2025} S. Looi, 
\newblock Some relations between Morawetz estimates and resolvent estimates for the wave equation,
\newblock preprint 2026. 

\bibitem{LooiSussman} S. Looi and E. Sussman,
\newblock{Asymptotics in all regimes for the Schr\"odinger equation with time-independent coefficients},
\newblock preprint 2024.

\bibitem{LooiTohaneanu} S. Looi and M. Tohaneanu, 
\newblock{Global existence and pointwise decay for nonlinear waves under the null condition}, 
\newblock \textit{Dyn.\ Partial Differ.\ Equ.}, to appear.

\bibitem{Luk2013} J. Luk, 
\newblock {The null condition and global existence for nonlinear wave equations on slowly rotating Kerr spacetimes},
\newblock J. Eur. Math. Soc. (JEMS), \textbf{15}(5):1629--1700, 2013.

\bibitem{Luk2019a} J. Luk and S.-J. Oh,
\newblock Strong cosmic censorship in spherical symmetry for two-ended asymptotically flat initial data I: the interior of the black hole region,
\newblock Ann. of Math. (2) \textbf{190}(1), 1–111 (2019). %

\bibitem{Luk2019b} J. Luk and S.-J. Oh,
\newblock Strong cosmic censorship in spherical symmetry for two-ended asymptotically flat initial data II: the exterior of the black hole region,
\newblock Ann. PDE \textbf{5}(1), No. 6, 194 (2019). %

\bibitem{Luk2015} J. Luk and S.-J. Oh,
\newblock Quantitative decay rates for dispersive solutions to the Einstein-scalar field system in spherical symmetry,
\newblock Anal. PDE \textbf{8}(7), 1603–1674 (2015).


\bibitem{LukOh} J. Luk and S.-J. Oh, 
\newblock {Late time tail of waves on dynamic asymptotically flat spacetimes of odd space dimensions},
\newblock preprint 2024, arXiv:2404.02220.

\bibitem{LukOh2} J. Luk and S.-J. Oh,
\newblock {Generalized price’s law decay and their non-stationary and nonlinear corrections}, 
in preparation, 2024.

\bibitem{MaZhang2023CMP} S. Ma and L. Zhang,
\newblock {Sharp Decay for Teukolsky Equation in Kerr Spacetimes},
\newblock Commun. Math. Phys. \textbf{401}, 333–434 (2023). 

\bibitem{MaZhang2023TAMS} S. Ma and L. Zhang,
\newblock {Precise late-time asymptotics of scalar field in the interior of a subextreme Kerr black hole and its application in Strong Cosmic Censorship conjecture},
\newblock Trans. Amer. Math. Soc. \textbf{376} (2023), 7815-7856.

\bibitem{Ma2022b} S. Ma and L. Zhang,
\newblock Price’s law for spin fields on a Schwarzschild background,
\newblock Ann. PDE \textbf{8}(2), No. 25, 100 (2022).

\bibitem{Ma2022c} S. Ma and L. Zhang,
\newblock Sharp decay estimates for massless Dirac fields on a Schwarzschild background,
\newblock J. Funct. Anal. \textbf{282}(6), No. 109375, 112 (2022).

\bibitem{MarzuolaMetcalfeTataruTohaneanu2010} J. Marzuola, J. Metcalfe, D. Tataru, and M. Tohaneanu,
\newblock {Strichartz estimates on Schwarzschild black hole backgrounds},
\newblock Comm. Math. Phys., \textbf{293}(1):37--83, 2010.

\bibitem{MST} J. Metcalfe, J. Sterbenz, and D. Tataru,
\newblock {Local energy decay for scalar fields on time dependent non-trapping backgrounds},
\newblock Amer. J. Math. \textbf{142} (2020), no. 3, 821--883.

\bibitem{MT} J. Metcalfe and D. Tataru,
\newblock {Decay estimates for variable coefficient wave equations in exterior domains},
\newblock Advances in phase space analysis of partial differential equations, 201--216, Progr. Nonlinear Differential Equations Appl., 78.


\bibitem{MTT} J. Metcalfe, D. Tataru, M. Tohaneanu, 
\newblock {Price's law on nonstationary space-times},
\newblock Adv. Math. \textbf{230} (2012), no. 3, 995--1028.

\bibitem{MiaoPeiYu}
S. Miao, L. Pei, and P. Yu.,
\newblock On Classical Global Solutions of Nonlinear Wave Equations with Large Data. 
\newblock International Mathematics Research Notices (2019) \textbf{19}, 5859-5913. 

\bibitem{Millet}
P. Millet,
\newblock {Optimal decay for solutions of the Teukolsky equation on the Kerr metric for the full subextremal range $|a| < M$},
\newblock arXiv:2302.06946

\bibitem{Mor24} K. Morgan. 
\newblock {The effect of metric behavior at spatial infinity on pointwise wave decay in the asymptotically flat stationary setting},
\newblock Amer. J. Math., 146(1):47–105 (2024).

\bibitem{MW23} K. Morgan, and J. Wunsch, 
\newblock {Generalized Price's law on fractional-order asymptotically flat stationary spacetimes},
\newblock Math. Res. Lett. 30 (2023).

\bibitem{Mos16} G. Moschidis,
\newblock The $r^p$-weighted energy method of Dafermos and Rodnianski in general asymptotically flat spacetimes and applications,
\newblock Ann. PDE \textbf{2}(1), 6, 194 (2016).


\bibitem{OliSte} 
J. Oliver and J. Sterbenz,
\newblock {A vector field method for radiating black hole spacetimes}
\newblock Anal. PDE \textbf{13} (2020) (1) 29-92. 

\bibitem{Pasqualotto2023} F. Pasqualotto, Y. Shlapentokh-Rothman, and M. Van de Moortel,
\newblock The asymptotics of massive fields on stationary spherically symmetric black holes for all angular momenta,
\newblock arXiv:2303.17767, preprint (2023).

\bibitem{Pec} H. Pecher,
\newblock {Decay of solutions of nonlinear wave equations in three space dimensions},
\newblock J. Functional Analysis \textbf{46} (1982), no. 2, 221--229.

\bibitem{Price} R. H. Price, 
\newblock {Nonspherical perturbations of relativistic gravitational collapse. I. Scalar and gravitational perturbations},
\newblock Phys. Rev. D, \textbf{5}:2419--2438, 1972.

\bibitem{Sbierski2023} J. Sbierski,
\newblock Instability of the Kerr Cauchy horizon under linearised gravitational perturbations,
\newblock Ann. PDE \textbf{9}(1), Paper No. 7, 133 (2023). %

\bibitem{Schlag2021} W. Schlag,
\newblock On pointwise decay of waves,
\newblock J. Math. Phys. \textbf{62}(6), No. 061509, 27 (2021).

\bibitem{Schlue2013} V. Schlue,
\newblock {Decay of linear waves on higher-dimensional Schwarzschild black holes},
\newblock Anal. PDE, \textbf{6}(3):515–600, 2013.


\bibitem{Sha} J. Shatah,
\newblock {Global existence of small solutions to nonlinear evolution equations},
\newblock Journal of Differential Equations \textbf{46}, Issue 3, December 1982, Pages 409-425.

\bibitem{ShatahStruwe1993} J. Shatah, M. Struwe,
\newblock {Regularity results for nonlinear wave equations},
\newblock Ann. of Math. \textbf{138} (1993), 503–518.



\bibitem{ShlapentokhRothmanModeStability} Y.~Shlapentokh-Rothman,
\newblock Quantitative mode stability for the wave equation on the Kerr spacetime,
\newblock Ann. Henri Poincaré, \textbf{16}(1):289--345, 2015.

\bibitem{Shlapentokh-Rothman2020a} Y. Shlapentokh-Rothman and R. Teixeira da Costa,
\newblock Boundedness and decay for the Teukolsky equation on Kerr in the full subextremal range $|a| < m$: frequency space analysis,
\newblock arXiv:2007.07211, preprint (2020).

\bibitem{Shlapentokh-Rothman2023} Y. Shlapentokh-Rothman and R. Teixeira da Costa,
\newblock Boundedness and decay for the Teukolsky equation on Kerr in the full subextremal range $|a| < m$: physical space analysis,
\newblock arXiv:2302.08916, preprint (2023).

\bibitem{Shlapentokh-Rothman2024} Y. Shlapentokh-Rothman and M. Van de Moortel,
\newblock Polynomial time decay for solutions of the Klein–Gordon equation on a subextremal Reissner–Nordström black hole,
\newblock arXiv:2401.00048, preprint (2024).

\bibitem{SmSo} H. F. Smith and C. D. Sogge,
\newblock {Global Strichartz estimates for nontrapping perturbations of the Laplacian},
\newblock Comm. Partial Differential Equations \textbf{25} (2000), 2171--2183.

\bibitem{SW} C. Sogge and C. Wang,
\newblock {Concerning the wave equation on asymptotically Euclidean manifolds},
\newblock J. Anal. Math. \textbf{112} (2010) 1--32.%

\bibitem{Sussman} E. Sussman, 
\newblock {Hydrogen-like Schrödinger operators at low energies},
\newblock preprint, arXiv:2204.08355 (2022).%

\bibitem{Sussman2023} E. Sussman,
\newblock Massive wave propagation near null infinity,
\newblock preprint, arxiv.org:2305.01119 (2023).

\bibitem{Szpak2008} N. Szpak, 
\newblock {Linear and nonlinear tails I: General results and perturbation theory},
\newblock J. Hyperbolic Differ. Equ., \textbf{5}(4):741--765, 2008.%

\bibitem{Szpak2009} N. Szpak, P. Bizoń, T. Chmaj, and A. Rostworowski, 
\newblock {Linear and nonlinear tails II: Exact decay rates in spherical symmetry},
\newblock J. Hyperbolic Differ. Equ., \textbf{6}(1):107--125, 2009. %

\bibitem{Tataru2013} D. Tataru, 
\newblock {Local decay of waves on asymptotically flat stationary space-times},
\newblock Amer. J. Math., \textbf{135}(2):361--401, 2013. %

\bibitem{TT} D. Tataru and M. Tohaneanu,
\newblock {A local energy estimate on Kerr black hole backgrounds},
\newblock Int. Math. Res. Not., (2011), no. 2, 248--292. %

\bibitem{TohaneanuKerrStrichartz} M. Tohaneanu.
\newblock Strichartz estimates on {K}err black hole backgrounds.
\newblock Trans. Amer. Math. Soc., \textbf{364}(2):689--702, 2012.

\bibitem{Toh2022} M. Tohaneanu,
\newblock {Pointwise decay for semilinear wave equations on Kerr spacetimes},
\newblock Lett. Math. Phys., \textbf{112}(1): No. 6, 30, 2022.

\bibitem{Van2022} M. Van de Moortel,
\newblock Decay of weakly charged solutions for the spherically symmetric Maxwell-charged-scalar-field equations on a Reissner-Nordström exterior space-time,
\newblock Ann. Sci. Éc. Norm. Supér. (4) \textbf{55}(2), 283–404 (2022). 

\bibitem{Vasy2011} A. Vasy. 
\newblock Microlocal analysis of asymptotically hyperbolic spaces and high energy resolvent estimates,
\newblock preprint, arXiv:1104.1376, 2011.


\bibitem{VasyMicroKerrdS} A. Vasy.
\newblock Microlocal analysis of asymptotically hyperbolic and Kerr--de
  Sitter spaces (with an appendix by Semyon Dyatlov),
\newblock Invent. Math., \textbf{194}(2):381--513, 2013.

\bibitem{VasyLowEnergyRiemannian} A. Vasy.
\newblock Resolvent near zero energy on Riemannian scattering (asymptotically conic) spaces,
\newblock Pure Appl. Anal., \textbf{3}(1):1--74, 2021.

\bibitem{VasyLAPLag} A. Vasy,
\newblock {Limiting absorption principle on Riemannian scattering (asymptotically conic) spaces, a Lagrangian approach},
\newblock Comm. Partial Differential Equations, \textbf{46} (2021), no. 5, 780--822. MR 4265461

\bibitem{VasyLowEnergyLag} A. Vasy,
\newblock {Resolvent near zero energy on Riemannian scattering (asymptotically conic) spaces, a Lagrangian approach},
\newblock Comm. Partial Differential Equations, \textbf{46} (2021), no. 5, 823--863. MR 4265462

\bibitem{VasyWunschMorawetz} A. Vasy and J. Wunsch,
\newblock Morawetz estimates for the wave equation at low frequency,
\newblock Math. Ann., \textbf{355} (4):1221--1254, 2013.

\bibitem{VasyZworskiSmcl} A. Vasy and M. Zworski,
\newblock Semiclassical Estimates in Asymptotically Euclidean Scattering,
\newblock Communications in Mathematical Physics, \textbf{212}(1):205--217, 2000.

\bibitem{WhitingKerrModeStability} B.~F.~Whiting,
\newblock Mode stability of the Kerr black hole,
\newblock J. Math. Phys., \textbf{30}(6):1301--1305, 1989.

\bibitem{WunschZworskiResolventTrapped} J.~Wunsch and M.~Zworski,
\newblock Resolvent estimates for normally hyperbolic trapped sets,
\newblock Ann. Henri Poincaré, \textbf{12}(7):1349--1385, 2011.

\bibitem{Zen} A. L. Zenginoglu, G. Khanna, and L. M. Burko,
\newblock {Intermediate behavior of Kerr tails},
\newblock Gen. Relativity Gravitation, \textbf{46}(3):1672, 16, 2014.


\end{thebibliography}
\end{document}